\newtheorem{theorem}{Theorem}[section]
\newtheorem{proposition}[theorem]{Proposition}
\newtheorem{lemma}[theorem]{Lemma}
\newtheorem{corollary}[theorem]{Corollary}
\theoremstyle{definition}
\newtheorem{definition}[theorem]{Definition}
\newtheorem{example}[theorem]{Example}
\theoremstyle{remark}
\newtheorem{remark}[theorem]{Remark}
\def\L{{\mathcal L}}
\def\N{{\mathcal N}}
\def\B{{\mathcal B}}
\def\D{{\mathcal D}}
\def\H{{\mathcal H}}
\def\P{{\mathcal P}}
\def\M{{\mathcal M}}
\def\V{{\mathcal V}}
\def\W{{\mathcal W}}
\def\G{{\mathcal G}}  
\def\F{{\mathcal F}} 
\def\F{{\mathcal F}} 
\def\K{{\mathcal K}} 
\def\J{{\mathcal J}}
\def\R{\Bbb R} 
\def\Z{\Bbb Z} 
\def\Na{\Bbb N}
\def\T{\Bbb T}  
\def\C{\Bbb C}
\def\Q{\Bbb Q}
\def\He{\Bbb H}
\def\A{{\mathcal A}}
\def\Sc{Schr\"o\-din\-ger}
\def\la{\langle}
\def\be{\begin{equation}}
\def\ee{\end{equation}}
\def\bea{\begin{eqnarray}}
\def\eea{\end{eqnarray}}
\def\ra{\rangle}
\def\ds{\displaystyle}
\def\om{\omega}
\def\Om{\Omega}
\def\ep{\varepsilon}
\def\RR{\mathcal R}
\newcommand{\truc}{\widetilde}
\begin{document}
\baselineskip=18pt
\title{Convergence of a quantum normal form and an exact quantization for\-mu\-la}
\author{Sandro Graffi}
\address {Dipartimento di Matematica, Universit\`{a} di Bologna, 40127 Bologna, Italy}
\email{graffi@dm.unibo.it}
%thanks{First author supported by NSF grant DMS 890771}
\author{Thierry Paul}
\address{Centre de Math\'ematiques Laurent Schwartz, 
\'Ecole Polytechnique, 91128 Palaiseau Cedex, Fran\-ce}
\email{paul@math.polytechnique.fr}

\date{}
%\date{\today}
%\maketitle

\begin{abstract}
{Let the quantization of the linear flow of diophantine
frequencies $\om$ over the torus $\T^l$, $l>1$, namely the Schr\"odinger operator
 $-i\hbar\omega\cdot\nabla$ on $L^2(\T^l)$, be
perturbed by the quantization of a function $\V_\om: \R^l\times\T^l\to\R$ of the form
\vskip 5pt\noindent
$$
\V_\om(\xi,x)=\V(z\circ \L_\om(\xi),x),\quad \L_\om(\xi):= \om_1\xi_1+\ldots+\om_l\xi_l
$$
\vskip 4pt\noindent
where  $z\mapsto \V(z,x): \R\times\T^l \to\R$ is real-holomorphic.
 We prove that the corresponding quantum normal form converges uniformly with respect to $\hbar\in [0,1]$. 
Since the quantum normal form reduces to the classical one  
for $\hbar=0$, this result simultaneously yields an exact
 quantization formula for the quantum spectrum, 
as well as a convergence criterion for the Birkhoff normal form, valid  
for a class of perturbations holomorphic away from the origin.  The main technical aspect 
concerns  the quantum homological equation $\ds 
{[F(-i\hbar\om\cdot\nabla),W]}/{i\hbar}+V=N$,  
$F:\R\to\R$ being  a smooth function $\ep-$close to the identity. Its  solution is 
constructed, 
and estimated uniformly with respect to $\hbar\in [0,1]$,  by solving 
the  equation $\{F(\L_\om),\W\}_M+\V=\N$ for the corresponding symbols. 
Here $\{\cdot,\cdot\}_M$ stands for the Moyal bracket.  
As a consequence, the KAM iteration for the symbols of the quantum operators can be implemented, 
and its convergence proved, uniformly with respect to 
$(\xi,\hbar,\ep)\in \R^l\times [0,1]\times \{\ep\in\C\,|\;|\ep|<\ep^\ast\}$, where  $\ep^\ast>0$ is explicitly estimated in terms only of the diophantine constants. This in turn entails the uniform 
convergence of the quantum normal form. }
\end{abstract}
%\date{} 
\maketitle 

%\subjclass{???}  \keywords{????}
%
%\maketitle
\tableofcontents
%vskip 1cm\noindent
%%%%%%%%%%%%%%%%%%%%%%%%%ajout Weyl%%%%%%%%%%%%%%%%%%%%%%%%%%%%%%%%%%%%%%
%%%%%%%%%%%%%%%%%%%%%%%%%%%%%%%%%%%%%%%%%%%%%%%%%%%%%%%%%%%%%%%%%%%%%%%%%%
%%%%%%%%%%%%%%%%%%%%%%%%%%%%%%%%%%%%%%%%%%%%%%%%%%%%%%%%%%%%%%%%%%%%%%%
%\newpage
%%%%%%%%%%%%%%%%%%%%%%%%%%%%%%%%%%%%%%%%%%%%%%%%%%%%%%%%%%%%%%%%%%%%%%
%%%%%%%%%%%%%%%%%%%%%%%%%%%%%%%%%%%%%%%%%%%%%%%%%%%%%%%%%%%%%%%%%%%%%%
%%   Approximate solutions
%%%%%%%%%%%%%%%%%%%%%%%%%%%%%%%%%%%%%%%%%%%%%%%%%%%%%%%%%%%%%%%%%%%%%%
%%%%%%%%%%%%%%%%%%%%%%%%%%%%%%%%%%%%%%%%%%%%%%%%%%%%%%%%%%%%%%%%%%%%%%

\section{Introduction}
\renewcommand{\thetheorem}{\thesection.\arabic{theorem}}
\renewcommand{\theproposition}{\thesection.\arabic{proposition}}
\renewcommand{\thelemma}{\thesection.\arabic{lemma}}
\renewcommand{\thedefinition}{\thesection.\arabic{definition}}
\renewcommand{\thecorollary}{\thesection.\arabic{corollary}}
\renewcommand{\theequation}{\thesection.\arabic{equation}}
\renewcommand{\theremark}{\thesection.\arabic{remark}}
\setcounter{equation}{0}%
\setcounter{theorem}{0}% 
\noindent
\subsection{Quantization formulae}
The establishment of a quantization formula (QF) for the eigenvalues of the \Sc\ operators is a  classical mathematical problem of quantum mechanics (see e.g.\cite{FM}). 
To review the notion of QF, consider first  a semiclassical pseudodifferential operator $H$ (for this notion, see e.g.\cite{Ro}) acting on $L^2(\R^l)$, $l\geq 1$, of order $m$,  self-adjoint with pure-point spectrum, with (Weyl) symbol $\sigma_H(\xi,x)\in C^\infty(\R^l\times\R^l;\R)$.  
 %%%%%%%%
 \begin{definition}\label{quant}
 {\it We say that $H$ admits an $M$-smooth {\rm exact} QF, $M\geq 2$,
 if  there exists a function $\mu:$ $(A,\hbar)\mapsto \mu(A,\hbar)\in C^M(\R^l\times [0,1]; \R)$  such that:
 \begin{enumerate}
 \item 
 $\mu(A,\hbar)$ admits an asymptotic expansion up to order $M$ in $\hbar$ uniformly on compacts with respect to $A\in\R^l$;
 \item
 %\be\label{qf}
 $\forall\hbar\in]0,1]$, there is a sequence $n_k:=(n_{k_1},\ldots,n_{k_l})\subset \Z^l$ such that all eigenvalues  $\lambda_{k}(\hbar)$ of $H$ admit the representation:
 \be
 \label{FQ1}
\lambda_{k}(\hbar)=\mu(n_k\hbar,\hbar).
 \ee
 \end{enumerate}}
 \end{definition}
% \noindent
 \begin{remark} (Link with the Maslov index)
 \label{maslov}
Consider any function $f:\ \R^l\to\R^l$ with the property  
 $\la f(A),\nabla\mu(A,0)\ra $ $=\partial_\hbar\mu(A,0)$. Then we can rewrite the asymptotic expansion of $\mu$ at second order as :
 \be
 \mu(n_k\hbar,\hbar)=\mu(n_k\hbar+\hbar f(n_k\hbar))+O(\hbar^2).
 \ee
  When  $f(m\hbar)=\nu, \;\nu\in\Q^l$, the Maslov index \cite{Ma} is recovered.
Moreover, when 
 \be
 \label{QF2}
 |\lambda_{k}(\hbar)-\mu(n_k\hbar,\hbar)|=O(\hbar^M), \quad \hbar\to 0, \quad M\geq 2
 \ee
 then we speak of {\it approximate} QF of order $M$.   
\end{remark}
\begin{example}
 (Bohr-Som\-mer\-feld-Ein\-stein for\-mu\-la). Let  $\sigma_H$ fulfill the conditions of the Liouville-Arnold theorem (see e.g.\cite{Ar1}, \S 50). Denote  $A=(A_1,\ldots,A_l)\in \R^l$ the action variables, and $E(A_1,\ldots,A_l)$ the symbol $\sigma_H$ expressed as a function of the action variables.   Then   the Bohr-Som\-mer\-feld-Ein\-stein for\-mu\-la (BSE) QF is
 \be
 \label{QF3}
 \lambda_{n,\hbar}=E((n_1+\nu/4)\hbar,\ldots,(n_l+\nu/4)\hbar)+O(\hbar^2)
 \ee
 where $\nu=\nu(l)\in\Na\cup\{0\}$ is the Maslov index \cite{Ma}. When $H$ is the \Sc\ operator, and $\sigma_H$ the corresponding classical Hamiltonian, (\ref{QF3}) yields the approximate eigenvalues, i.e. the approximate quantum energy levels.  In the particular case of a quadratic, positive definite Hamiltonian, which can always be reduced to the harmonic oscillator with frequencies $\om_1>0,\ldots,\om_l>0$,  the BSE is an exact quantization formula in the sense of Definition 1.1 with $\nu=2$, namely:
 $$
 \mu(A,\hbar)=E(A_1+\hbar/2,\ldots,A_l+\hbar/2) =\sum_{k=1}^l\om_k(A_k+\hbar/2)
  $$
  \end{example}
  \vskip 10pt
To our knowledge, if $l>1$ the only known examples of  exact QF in the sense of Definition 1.1 correspond to classical systems integrable by separation of variables, such
 that each separated system admits in turn an exact QF, as in the case of the Coulomb potential (for exact QFs for general one-dimensional \Sc\ operators see \cite{Vo}).  For general integrable systems, only the approximate BSE formula is valid. Non-integrable systems admit a formal approximate QF,  the so-called Einstein-Brillouin-Keller (EBK), recalled below, provided they possess a normal form to all orders.  

In this paper we consider a  perturbation of a linear Hamiltonian on  $T^\ast\T^l=\R^l\times\T^l$,  and prove that the corresponding quantized operator can be unitarily
conjugated to a function of the differentiation operators  via the construction of a quantum normal form which converges uniformly with respect to $\hbar\in [0,1]$. This yields immediately an exact, $\infty$-smooth QF. The uniformity with respect to $\hbar$ yields also an explicit family of classical Hamiltonians admitting a convergent normal form, thus making the system integrable.  
\subsection{Statement of the results}
 Consider the Hamiltonian family $\H_\ep: \R^l\times \T^l\rightarrow \R, (\xi,x)\mapsto \H_\ep(\xi,x)$,   indexed by $\ep\in\R$, defined as follows:
 \be
\H_\ep(\xi,x):=\L_\om(\xi)+\ep \V(x,\xi);\quad \L_\om(\xi):=\la\om,\xi\ra, \quad\om\in\R^l,\quad  \V\in C^\infty(\R^l\times\T^l;\R).  
\ee
Here $\xi\in\R^l, x\in\T^l$ are canonical coordinates on the phase space $\R^l\times\T^l$, the $2l-$cylinder.  $\L_\om(\xi)$ 
generates the linear Hamiltonian flow $\xi_i\mapsto \xi_i, x_i\mapsto x_i+\om_it$ on $\R^l\times\T^l$.  For $l>1$ the dependence of 
$\V$ on $\xi$ makes non-trivial  the integrability of the flow of $\H_\ep$ when $\ep\neq 0$, provided the  
{\it frequencies} $\om:=(\om_1,\ldots, \om_l)$ are independent  over $\Q$ and fulfill a diophantine condition such as (\ref{DC}) below. Under this assumption it is well known that
 $\H_\ep$ admits a  {\it normal form} at any order (for this notion, see e.g. \cite{Ar2}, \cite{SM}). Namely, $\forall\,N\in\Na$ 
a canonical 
bijection ${\mathcal C}_{\ep,N}:\R^l\times\T^l\leftrightarrow \R^l\times\T^l$ close to the identity can be constructed in such a way that:   
\be
\label{CNF} 
(\H_\ep\circ {\mathcal C}_{\ep,N})(\xi,x)=\L_\om(\xi)+\sum_{k=1}^N 
\B_k(\xi;\om)\ep^k+\ep^{N+1}{\mathcal R}_{N+1,\ep}(\xi,x)
\ee
This  makes the flow of  $\H_\ep(\xi,x)$ integrable up to an error of order $\ep^{N+1}$. Here  ${\mathcal C}_{\ep,N}$ is the  flow at time $1$ generated by  the Hamiltonian
\be
\label{FGen}
 \W^N_\ep(\xi,x):=\la\xi,x\ra+\sum_{k=1}^N\W_k(\xi,x)\ep^k.
\ee
 The functions $\W_k(\xi,x): \R^l\times \T^l\to\R$ are recursively computed by canonical perturbation theory via the standard Lie transform method of Deprit\cite{De} and Hori\cite{Ho}  (see also e.g \cite{Ca}). 

To describe the quantum counterpart, let $H_\ep=L_\om+\ep V$ be the  
operator in 
$L^2(\T^l)$  of symbol $\H_\ep$, with domain $D(H_\ep)= H^1(\T^l)$ 
 and action specified as follows: 
\bea
\forall  u\in D(H_\ep), \quad H_\ep u= L_\om u+Vu,  \quad 
 L_\om u=\sum_{k=1}^l\om_kD_ku, \;\; D_k  u:=-i\hbar\partial_{x_k}u.  
\eea
$V$ is  the Weyl quantization of $\V$ (formula (\ref{1erweyl}) below).

 Since {\it uniform} quantum normal forms (see e.g. \cite{Sj},\cite{BGP},\cite{Po1}, \cite{Po2}) are not so well known as the classical ones, let us recall here their definition. The  construction is reviewed  in Appendix. 
 
 \begin{definition}
 [Quantum normal form (QNF)]\label{QuNF}
{\it  We say that a family of operators $H_\ep$ $\ep$-close (in the norm resolvent topology) to $H_0=L_\omega$ admits a uniform quantum normal form (QNF) at any order if 
\begin{itemize}
\item[(i)]
  There exists a sequence of continuous self-adjoint operators $W_k(\hbar)$ in $L^2(\T^l)$, $k=1,\ldots$ and a sequence of 
  functions
 $B_k(\xi_1,\ldots,\xi_l,\hbar)\in C^\infty(\R^l\times [0,1];\R)$, such that, defining $\forall\,N\in\Na$ the family of unitary operators:
 \bea
\label{QNF}
U_{N,\ep}(\hbar)=e^{iW_{N,\ep}(\hbar)/\hbar}, \quad W_{N,\ep}(\hbar)=\sum_{k=1}^N 
W_k(\hbar)\ep^k
\eea 
we have:
\bea
\label{AQNF}
&&
U_{N,\ep}(\hbar)H_\ep U_{N,\ep}^\ast(\hbar)=L_\om+\sum_{k=1}^N 
B_k(D_1,\ldots,D_l,\hbar)\ep^k+\ep^{N+1}R_{N+1,\ep}(\hbar).
\eea
\item [(ii)]
The continuous operators $W_k$,  $B_k(D,\hbar)$, $R_{N+1}$   admit smooth symbols $\W_k, \B_k,  \RR_{N+1}(\ep)$, which reduce  to the classical normal form construction (\ref{CNF}) and (\ref{FGen}) as $\hbar\to 0$:
\be
\label{princip}
\B_k(\xi;0)=\B_k(\xi);\quad \W_k(\xi,x,0)=\W_k(\xi,x),\quad \RR_{N+1,\ep}(x,\xi;0)=\RR_{N+1,\ep}(x,\xi)
\ee
 \end{itemize}}
\end{definition}
 (\ref{AQNF})  entails that  $H_\ep$ commutes with $H_0$ up to an error of order $\ep^{N+1}$; hence  the following approximate QF formula holds for the eigenvalues of $H_\ep$:
\be
\label{AQF}
\lambda_{n,\ep}(\hbar)=\hbar\la n,\om\ra+\sum_{k=1}^N \B_k(n_1\hbar,\ldots,n_l\hbar,\hbar)\ep^k+O(\ep^{N+1}).
\ee
%\vskip 5pt
 \vskip 6pt\noindent
 \begin{definition}
 \label{QNFConv}{(Smoothly and uniformly convergent quantum normal forms)}
{\it We say that the QNF is smoothlly (with respect to $(\xi,x)\in\R^l\times\T^l)$ and  uniformly (with respect to $\hbar$)} convergent,  {\it if  there is $\ep^\ast>0$ such that, for $|\ep|<\ep^\ast$  and any $\alpha,\beta,\gamma\in\mathbb N^l$, one has}
\vskip 5pt\noindent
\bea
&&
\label{convunifQ1}
\sum_{k=1}^\infty\,\sup_{\R^l\times\T^l\times [0,1]}|D^\alpha_\xi D^\beta_x\W_k(\xi,x;\hbar)\ep^k|<+\infty 
\\
&&
\label{convunifQ2}
\sum_{k=1}^\infty\,\sup_{\R^l\times [0,1]}|D^\gamma_\xi\B_k(\xi,\hbar)\ep^k|<+\infty.
\eea
\vskip 5pt\noindent
\end{definition}
\noindent
(\ref{convunifQ1},\ref{convunifQ2}) entail that,    if $|\ep|<\ep^\ast$, we can define the symbols  
\vskip 3pt\noindent
\bea
\label{somma}
&&
\W_{\infty}(\xi,x;\ep,\hbar):=\la \xi,x\ra+\sum_{k=1}^\infty\W_k(\xi,x;\hbar)\ep^k\in C^M(\R^l\times\T^l\times [0,\ep^\ast] \times[0,1];\C), 
\\
\label{somma1}
&&
 \B_{\infty}(\xi;\ep,\hbar):=\L_\om(\xi)+\sum_{k=1}^\infty\B_k(\xi;\hbar)\ep^k \in C^M(\R^l\times [0,\ep^\ast] \times[0,1];\C)
\eea
such that,  $\forall\,\alpha,\beta,\gamma\in\mathbb N^l$
\bea
\label{stimasomma}
&&
\sup_{\R^l\times\T^l\times [0,1]}|D^\alpha_\xi D^\beta_x\W_{\infty}(\xi,x;\ep,\hbar)-\la\xi,x\ra|<+\infty, 
\\
\label{stimasomma1}
&&
\sup_{\R^l\times [0,1]} |D^\gamma\B_{\infty}(\xi;\ep,\hbar)|<+infty
\eea
\vskip 3pt\noindent
The uniform convergence of the QNF  has the following  straightforward consequences: 
\begin{itemize}
\item[(A1)]
{\it By the Calderon-Vaillancourt theorem   (see \S 3 below) 
the Weyl quantizations   $W_{\infty}(\ep,\hbar)$, $B_{\infty}(\ep,\hbar)$ of $\W_{\infty}(\xi,x;\ep,\hbar)$, $\B_{\infty}(\ep,\hbar)$ are continuous operator in $L^2(\T^l)$. Then:} 
\begin{eqnarray*}
&&
e^{iW_{\infty}(\ep,\hbar)/\hbar}H_\ep e^{-iW_{\infty}(\ep,\hbar)/\hbar}=B_{\infty}(D_1,\ldots,D_l;\ep,\hbar).
\\
&&
B_{\infty}(D_1,\ldots,D_l;\ep,\hbar):=L_\om+\sum_{k=1}^\infty B_k(D_1,\ldots,D_l;\hbar)\ep^k.
\end{eqnarray*}
\item[(A2)]
{\it The eigenvalues  of $H_\ep$ are given by the {\rm exact} quantization formula:}
\be
\label{QF}
\lambda_{n}(\hbar,\ep)=\B_{\infty}(n\hbar,\hbar,\ep), \qquad n\in\Z^l, \quad  \ep\in {\frak D}^\ast:=\{\ep\in \R\,|\,|\ep|<\ep^\ast\}
\ee
\item [(A3)] {\it The  classical normal form is convergent, uniformly on compacts with respect to $\xi\in\R^l$, and therefore  if $\ep\in {\frak D}^\ast$ the Hamiltonian $\H_\ep(\xi,x)$ is integrable.}
\end{itemize}

Let us now state explicit conditions on $V$ ensuring the uniform convergence of the QNF.
\newline
Given $\F(t,x)\in C^\infty(\R\times\T^l;\R)$, consider  its Fourier expansion
\be
\label{FFE}
 \F(t,x)=\sum_{q\in\Z^l}\F_q(t)e^{i\la q,x\ra}.
 \ee
and define  $ \F_\om \in C^\infty(\R^l\times\T^l;\R)$ in the following way: 
\vskip 4pt\noindent
\bea
&&
\label{Fouom}
 \F_\om(\xi,x):=\F(\L_\om(\xi),x)=\sum_{q\in\Z^l}\F_{\om,q}(\xi)e^{i\la q,x\ra}, 
 \\
 &&
  \F_{\om,q}(\xi):=(\F_q\circ \L_\om)(\xi)=\frac1{(2\pi)^{l/2}}\int_\R\widehat{\F}_q(p)e^{-ip\L_\om(\xi)}\,dp=
  \\
  &&
 = \frac1{(2\pi)^{l/2}}\int_\R\widehat{\F}_q(p)e^{-i\la p\om,\xi\ra}\,dp, \quad p\om :=(p\om_1,\ldots,p\om_l ).
\eea
\vskip 4pt\noindent
Here, as above, $\L_\om(\xi)=\la\om,\xi\ra$.
\vskip 4pt\noindent
Given $\rho>0$, introduce the weighted norms:
\bea
&&
\|\F_{\om,q}(\xi)\|_\rho:=\int_\R|\widehat{\F}_q(p)|e^{\rho |p|}|\,dp
\\
&&
\|\F_\om(x,\xi)\|_{\rho}:=\sum_{q\in\Z^l}\,e^{\rho |q|}\|\F_{\om,q}\|_\rho
\eea
\vskip 4pt\noindent
 We can now formulate the main result of this paper. Assume:
 \vskip 4pt\noindent
\begin{itemize}
\item[(H1)] There exist $\gamma >0, \tau \geq l$ such that the frequencies $\om$ fulfill 
 the diophantine condition 
 \be
\label{DC}
|\la\om,q\ra|^{-1}\leq \gamma |q|^{\tau}, \quad q \in\Z^l, \; q\neq 0. 
\ee
\item[(H2)]  $V_\om$ is the Weyl quantization of $\V_\om(\xi,x)$ (see Sect.3 below), that is:
\vskip 8pt\noindent
\be\label{1erweyl}
V_\om f(x)=\int_{\R}\sum_{q\in\Z^l}\widehat{\V}_q(p)
e^{i\la q,x\ra+\hbar p\la \om,q\ra/2}f(x+\hbar p\om)\,dp, \quad f\in L^2(\T^l).
\ee
\vskip 5pt\noindent
Here $\V_\om(\xi,x)=\V(\la\omega,\xi\ra,x)$ for some smooth function
$\V(t;x): \R\times\T^l\to \R$.
\vskip 10pt\noindent
\item[(H3)] There is $ \rho>2 $ such that $ \|\V_\om\|_{\rho}<+\infty.$
\end{itemize}
\vskip 7pt\noindent
Clearly under these conditions  
the operator family  $ H_\ep:=L_\om+\ep V_\om$, $D(H_\ep) =H^1(\T^l)$, $\ep\in\R$, is self-adjoint in $L^2(\T^l)$ and has pure point spectrum. We can then state the main results.
\vskip 4pt\noindent
%%%%%%%%%%%
\begin{theorem}
%%%%%%%%%%
\label{mainth}  {\rm (Uniform convergence)}
\newline
Assume the validity of conditions (H1-H3). 
Let the diophantine constants $\gamma$, $\tau$ be such that:
\vskip 7pt\noindent
\be
\label{DC1}
\gamma\tau^{\tau}(\tau+2)^{4(\tau+2)}< \frac12.
 \ee
 \vskip 5pt\noindent
Then $ H_\ep$ admits a smoothly, uniformly convergent quantum normal form
$\B_{\infty,\om}(\xi,\ep,\hbar)$, in the sense of 
Definition 1.5. The 
 radius of convergence is not smaller than:
 \vskip 6pt\noindent
\be
\label{rast}  
  \ep^\ast(\tau):=
 \frac{1}{2^{2\tau}e^{24(2+\tau)}\|\V_\om\|_{\rho}}.
   \ee
Furthermore $\B_{\infty}(t,\ep,\hbar)$ is holomorphic with respect to $t$ in $\{t\in\C\,|\,|\Im t|<\rho/2\}$.
\end{theorem}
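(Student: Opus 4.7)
The plan is to run a quadratic (KAM-type) iteration directly at the level of Weyl symbols, using the fact that the structural ansatz $\V_\om(\xi,x)=\V(\L_\om(\xi),x)$ of hypothesis (H2) is preserved along the scheme. At the $n$-th step the symbol has the form $F_n(\L_\om(\xi);\hbar)+\ep_n\V^{(n)}(\L_\om(\xi),x;\hbar)$, with $F_n$ an $O(\ep)$-perturbation of the identity on $\R$, $\V^{(n)}$ analytic in a strip $\{|\imma t|<\rho_n\}$ in the single variable $t=\L_\om(\xi)$, and $\ep_n=\ep^{2^n}$. The resonant part $\la\V^{(n)}\ra_x$ (the $x$-average) depends on $\xi$ only through $\L_\om(\xi)$ and is absorbed into $F_{n+1}$; the non-resonant remainder is killed by solving the quantum homological equation
\be
\frac{1}{i\hbar}\{F_n(\L_\om),\W_n\}_M+\V^{(n)}-\la\V^{(n)}\ra_x=0,
\ee
and conjugation by $e^{iW_n/\hbar}$ (the Weyl quantization of $\W_n$) then pushes the perturbation from order $\ep_n$ to order $\ep_n^2=\ep_{n+1}$, preserving the $\V(\L_\om(\xi),x)$ ansatz.

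\medskip

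\noindent\textbf{Main obstacle: the quantum homological equation.} The core of the proof is to solve this homological equation with bounds \emph{uniform} in $\hbar\in[0,1]$. Expanding every $\L_\om$-type symbol as
\[
\A(\xi,x)=\sum_{q\in\Z^l}\int_\R\widehat{\A}_q(p)\,e^{-ip\L_\om(\xi)+i\la q,x\ra}\,dp,
\]
a direct Weyl-calculus computation, using that the left/right Moyal product by $F(\L_\om(\xi))$ translates the argument by $\pm\tfrac{\hbar}{2}\la\om,q\ra$, diagonalises the equation with multiplier
\[
\Delta^{F_n}_\hbar(\xi,q)=\frac{F_n(\L_\om(\xi)+\tfrac{\hbar}{2}\la\om,q\ra;\hbar)-F_n(\L_\om(\xi)-\tfrac{\hbar}{2}\la\om,q\ra;\hbar)}{i\hbar}.
\]
Since $F_n=\mathrm{id}+O(\ep)$, a mean-value argument yields $|\Delta^{F_n}_\hbar(\xi,q)|=|\la\om,q\ra|(1+O(\ep))$ uniformly in $\hbar\in[0,1]$ and in $\xi$; combined with the diophantine condition (H1) this gives the quantum small-denominator estimate $|\Delta^{F_n}_\hbar(\xi,q)|^{-1}\leq 2\gamma|q|^{\tau}$, the universal factor $2$ absorbing the entire quantum correction. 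This reduces the quantum homological problem to the classical one with identical constants $(\gamma,\tau)$. The remaining technical work, and the source of the explicit constants in (\ref{DC1}) and (\ref{rast}), is to convert this formal diagonalisation into quantitative bounds in the weighted norms $\|\cdot\|_\rho$: one must control simultaneously the loss of the $e^{\rho|p|}$-weight caused by the $p$-shift in the Moyal product and the loss of the $e^{\rho|q|}$-weight caused by the small-denominator factor $|q|^\tau$.

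\medskip

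\noindent\textbf{Iteration and conclusion.} With the one-step estimate in hand the rest is a standard quadratic iteration. Fix a decreasing sequence $\rho=\rho_0>\rho_1>\cdots\to\rho_\infty>\rho/2$ with $\rho_n-\rho_{n+1}\sim (n+1)^{-2}$, and establish the one-step inequality
\be
\|\V^{(n+1)}\|_{\rho_{n+1}}\leq C_\tau\,\gamma\,(\rho_n-\rho_{n+1})^{-(\tau+2)}\|\V^{(n)}\|_{\rho_n}^2,
\ee
by inserting the multiplier bound into the Lie-transform formula for the conjugated symbol (the Moyal exponential of $\W_n$). Iterating produces the super-exponential decay of $\|\V^{(n)}\|_{\rho_n}$ characteristic of KAM schemes precisely when $|\ep|\|\V_\om\|_\rho<\ep^\ast(\tau)$; the explicit constants in (\ref{DC1}) and (\ref{rast}) are what is needed to make the product of the one-step constants converge. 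Summing the Lie transforms yields $\W_\infty$ and $\B_\infty$ satisfying (\ref{convunifQ1})--(\ref{convunifQ2}). Uniformity in $\hbar$ is automatic, since every constant in the scheme is $\hbar$-independent; the principal-symbol identities (\ref{princip}) follow by letting $\hbar\to 0$, the Moyal bracket degenerating to the Poisson bracket and each $F_n$ admitting a $C^\infty$ limit; finally, holomorphy of $\B_\infty$ in $\{|\imma t|<\rho/2\}$ is read off from the fact that $\|\cdot\|_\rho$ controls a Fourier--Laplace transform in $t=\L_\om(\xi)$ of width $\rho$, and a width exceeding $\rho/2$ is preserved along the entire iteration.
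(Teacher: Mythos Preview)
Your proposal is correct and follows essentially the same route as the paper: a quadratic KAM iteration on Weyl symbols within the class $\J(\rho)$, solving the quantum homological equation by inverting the divided-difference multiplier of $F_n$, then combining the one-step quadratic estimate with the shrinking strip $\rho_n-\rho_{n+1}\sim(n+1)^{-2}$ to obtain super-exponential decay of $\|\V^{(n)}\|_{\rho_n}$.

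A few small differences in execution are worth noting. Where you invoke a single mean-value bound $|\Delta^{F_n}_\hbar(\xi,q)|=|\la\om,q\ra|(1+O(\ep))$, the paper writes $F_n=u+\Phi_{n}(u;\ep,\hbar)$, introduces $g_n(u,\zeta;\ep,\hbar):=[\Phi_n(u+\zeta)-\Phi_n(u)]/\zeta$, and expands $1/(1+g_n)$ as a geometric series; the smallness parameter is the explicit quantity $\theta_n(\N,\ep)=\sum_{s<n}|\ep_s|\,\|\N_s\|_{\rho_s}/(ed_s)$, whose control is part of the induction. This gives the same $2\gamma|q|^\tau$ bound but makes the dependence on the accumulated normal form transparent and is what actually produces the specific constants in \eqref{DC1} and \eqref{rast}. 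Also, what you call ``summing the Lie transforms'' is handled in the paper by showing that the ordered product $U_{n,\ep}=\prod_{s\le n}e^{i\ep_s W_s/\hbar}$ is Cauchy in operator norm; the limit symbol $\W_\infty$ is then obtained from the limit unitary rather than by directly summing the $\W_s$. These are packaging differences, not substantive ones.
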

\vskip 6pt
Our second result concerns the regularity  of $\B_{\infty,\om}(\xi;\ep,\hbar)$ with respect to $\hbar$. This property will depend on the radius of convergence as shown in the following Theorem. Although this point is not discussed here, we believe that $\B_{\infty,\om}(\xi;\ep,\hbar)$ has Gevrey regularity with respect to the Planck constant.
\begin{theorem} 
\label{regolarita} {\rm (Regularity with respect to $\hbar$).}
\newline
 For $r=0,1,\ldots$  let the diophantine constants $\gamma$, $\tau$ be such that:
 \vskip 7pt\noindent
 \be
 \label{DC2}
\gamma\tau^{\tau}(r+\tau+2)^{4(r+\tau+2)}< \frac12.
 \ee
 \vskip 4pt\noindent
and let: 
 \bea
&&
 \label{Dr}
 {\frak D}(\tau,r):=\{\ep\in\C\,|\,|\ep|<\ep^\ast(\tau,r)\},
 \\
 \label{epastr}
 &&
  \ep^\ast(\tau,r):=
 \frac{1}{e^{24(2+r+\tau)}(r+2)^{2\tau}\|\V_\om\|_{\rho}}=e^{-24 r}\left(\frac 2{2+r}\right)^{2\tau}\ep^\ast(\tau)
 \eea
\vskip 4pt\noindent
  Then, under the validity of conditions (H1-H3), there exists  $C_r=C_r(\ep^\ast)>0$ such that, for $\ep\in {\frak D}(\tau,r)$:
\vskip 4pt\noindent
\bea
\label{stimaG1}
\sum_{\gamma=0}^r\max_{\hbar\in [0,1]} \|\partial^\gamma_\hbar \B_{\infty,\om}(.;\ep,\hbar) \|_{\rho/2}\leq C_r, \;\;r=0,1,\ldots
\eea
In particular:  $\B_{\infty,\om}(\xi;\ep,.)\in C^r([0,1])$ uniformly w.r.t. $\xi\in\R^l$ and $|\ep|<\ep^\ast(\tau,r)$.
 \end{theorem}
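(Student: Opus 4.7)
The plan is to differentiate the iterative construction of the quantum normal form $\gamma \leq r$ times in $\hbar$ and show that the resulting series still converges in the weighted norm $\|\cdot\|_{\rho/2}$ for $|\ep| < \ep^\ast(\tau,r)$. Recall that Theorem 1.6 builds $\W_k(\xi,x;\hbar)$ and $\B_k(\xi;\hbar)$ recursively by solving quantum homological equations $\{F(\L_\om),\W\}_M + \V = \N$, where the Moyal bracket admits the formal expansion
\begin{equation*}
\{f,g\}_M(\xi,x;\hbar) = \{f,g\}(\xi,x) + \sum_{n\geq 1}\hbar^{2n} M_n(f,g)(\xi,x),
\end{equation*}
with $M_n(f,g)$ a bidifferential operator of order $2n+1$ in each argument. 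Since the potential $\V_\om$ is $\hbar$-independent, all $\hbar$-dependence of the normal form terms enters through these iterated Moyal brackets, and each $\hbar$-derivative $\partial_\hbar\{f,g\}_M$ is of the same type but with increased differential order in the arguments.

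These extra derivatives are controlled by analytic Cauchy-type estimates of the form
\begin{equation*}
\|\partial_x^\alpha\partial_\xi^\beta f\|_{\rho-\delta} \leq \frac{(|\alpha|+|\beta|)!}{\delta^{|\alpha|+|\beta|}}\,\|f\|_\rho,
\end{equation*}
valid because the norms $\|\cdot\|_\rho$ encode analyticity of width $\rho$ both in $x$ and in the Fourier-dual variable to $\L_\om(\xi)$. Differentiating the homological equation $\gamma$ times in $\hbar$ and applying Leibniz produces equations of the same homological type, but with right-hand sides built from $\partial_\hbar^{\gamma'}\W_j$, $\partial_\hbar^{\gamma'}\B_j$ ($j<k$, $\gamma'\leq\gamma$) together with up to $2\gamma$ additional phase-space derivatives of earlier-stage objects. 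The stronger diophantine condition (\ref{DC2}), with $r+\tau+2$ in place of $\tau+2$, provides exactly the slack needed to absorb the enlarged small-divisor factors $|q|^{2r+\tau}$ by the exponential weights $e^{\rho|q|}$, after an admissible shrinkage of the analytic width down to $\rho/2$.

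Concretely, I would rerun the KAM-type induction of the proof of Theorem 1.6, tracking alongside the main estimate of $\|\W_k\|_{\rho_k}|\ep|^k$ the auxiliary estimates of $\|\partial_\hbar^\gamma\W_k\|_{\rho_k}|\ep|^k$ for each $\gamma\leq r$, with $\rho_k\searrow\rho/2$ chosen so that each iteration step affords enough regularity to accommodate $2r$ extra derivatives. The arithmetic of this modified iteration yields precisely the reduction $\ep^\ast(\tau,r)=e^{-24r}(2/(r+2))^{2\tau}\ep^\ast(\tau)$ announced in (\ref{epastr}). Summing then gives
\begin{equation*}
\sum_{k=1}^\infty \|\partial_\hbar^\gamma \B_k(\cdot;\hbar)\|_{\rho/2}\,|\ep|^k \leq C_{r,\gamma},\qquad |\ep|<\ep^\ast(\tau,r),\ \gamma\leq r,
\end{equation*}
uniformly in $\hbar\in[0,1]$, from which (\ref{stimaG1}) follows on summation over $\gamma\leq r$. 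The $C^r$-regularity of $\B_{\infty,\om}(\xi;\ep,\cdot)$ on $[0,1]$ then follows from uniform convergence of a series of continuous functions.

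The main difficulty I anticipate is the bookkeeping of the Leibniz expansion of $\partial_\hbar^\gamma$ applied to the nested Moyal brackets arising in the recursive construction. At each KAM step new brackets are formed between freshly produced $\W$'s and previously-accumulated objects, and $\gamma$-fold $\hbar$-differentiation distributes derivatives across all these factors, generating combinatorial terms that must be shown not to exceed the slack provided by the enhanced diophantine condition (\ref{DC2}). The cleanest route is to work with the finite truncations $\B_{N,\om}$, prove the derivative estimates uniformly in $N$ by simultaneous induction on $N$ and $\gamma$, and then pass to the limit $N\to\infty$ using the uniform convergence already established in Theorem 1.6.
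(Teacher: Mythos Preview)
Your proposal is workable in outline, but the paper organizes the argument quite differently and thereby avoids the very bookkeeping you anticipate as the main difficulty. Rather than differentiating the KAM construction in $\hbar$ after the fact and tracking the quantities $\|\partial_\hbar^\gamma\W_k\|_{\rho_k}$ separately via a simultaneous induction on step and order, the paper builds the $\hbar$-derivatives into the norm from the outset: the norm $\|\cdot\|_{\rho,k}$ (Definition~2.4, equation~(\ref{sigom})) is a weighted $L^1$ norm on Fourier side that already sums $\partial_\hbar^\gamma$ for $\gamma=0,\ldots,k$ against compensating polynomial weights $\mu_{k-\gamma}$. The crucial work is then done once, in Proposition~\ref{stimeMo}, where the Moyal-bracket estimate $\|\{\F,\G\}_M\|_{\rho-d-d_1,k}\leq (k+1)4^k(e^2 d_1(d+d_1))^{-1}\|\F\|_{\rho,k}\|\G\|_{\rho-d,k}$ is proved for all $k$ simultaneously; the Leibniz expansion of $\partial_\hbar^\gamma$ acting on the oscillatory kernel $\sin(\hbar\Omega_\om/2)/\hbar$ is handled there (via Lemmas~\ref{symp}--\ref{sin}) rather than at every stage of the iteration. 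With this in hand, the entire KAM iteration of Sections~5--6 (Theorems~\ref{homo}, \ref{resto}, \ref{final}, Proposition~\ref{estl}) is carried out verbatim in the $\|\cdot\|_{\rho,k}$ norm, and the radius $\ep^\ast(\tau,k)$ of (\ref{epastr}) and the condition (\ref{DC2}) emerge directly from that computation. Theorem~\ref{regolarita} is then a two-line consequence: convergence of $\D_{n,\ep}$ in $\|\cdot\|_{\rho/2,r}$ (Corollary~\ref{maincc}(3)) together with the trivial inequality $\sum_{\gamma=0}^r\max_\hbar\|\partial_\hbar^\gamma\B_\infty\|_{\rho/2}\leq\|\B_\infty\|_{\rho/2,r}$.

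What you gain by the paper's route is precisely the elimination of the nested Leibniz bookkeeping you flagged: by proving the commutator estimate once at the level of symbols in a norm already containing the $\hbar$-derivatives, one never has to distribute $\partial_\hbar^\gamma$ across iterated brackets. Your approach would work, but it duplicates effort at every KAM step and makes the constants harder to track; the paper's choice of norm is the essential simplification.
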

\noindent
{\bf Remark}
\newline
 Since (see \S 2 below) functions $\F(t,\ep,\hbar)$ such that $\ds \sup_{\hbar\in [0,1]}\|\F(\cdot,\ep,\hbar)\|_\rho$ are holomorphic  w. r. t. $t$ in $\{t\in\C\,|\,|\Im t|<\rho\}$,  \eqref{stimaG1} taken for $r=0$ yields a  quantitative restatement of Theorem \ref{mainth}. 
\vskip 4pt
In view of Definition \ref{quant}, the following statement is a straightforward consequence of the above  Theorems:
\begin{corollary}[Quantization formula]\label{QFE}
$\H_\ep$ admits an exact, $\infty$-smooth quantization formula in the sense of Definition 1.1. That is, $\forall\,r\in\Na$, 
$\forall \,|\epsilon|<\ep^\ast (\tau,k)$ given by (\ref{epastr}), the eigenvalues of $H_\ep$ are  expressed by the formula:
\be
\label{EQF}
\lambda(n,\hbar,\ep)=\B_{\infty,\om}(n\hbar,\ep, \hbar)
=\L_\om(n\hbar)+\sum_{s=1}^\infty\B_s(\L_\om(n\hbar),\hbar)\ep^s
\ee
where $\B_{\infty,\om}(\xi,\ep, \hbar)$ belongs to $C^r(\R^l\times [0,\ep^\ast(\cdot,r)]\times [0,1])$, and admits an asymptotic expansion at order $r$ in $\hbar$, uniformly on compacts with respect to $(\xi,\ep)\in\R^l\times [0,\ep^\ast(\cdot,r)]$. 
\end{corollary}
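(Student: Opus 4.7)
The plan is to assemble the corollary from Theorem \ref{mainth} and Theorem \ref{regolarita} together with the observation that the Fourier basis of $L^2(\T^l)$ simultaneously diagonalizes the commuting self-adjoint operators $D_1,\ldots,D_l$. Essentially all the analytic work is already contained in the two theorems; the role of the corollary is to match their output to Definition \ref{quant}.

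Step one is the exact formula. For $|\ep|<\ep^\ast(\tau)$, Theorem \ref{mainth} yields a uniformly convergent QNF, and hence (property (A1)) a bounded unitary $U_\infty(\ep,\hbar)=e^{iW_\infty(\ep,\hbar)/\hbar}$ on $L^2(\T^l)$ with
\be
U_\infty(\ep,\hbar)\,H_\ep\,U_\infty^\ast(\ep,\hbar)=B_\infty(D_1,\ldots,D_l;\ep,\hbar),
\ee
where $B_\infty(D;\ep,\hbar)$ is the Weyl quantization of the $x$-independent symbol $\B_{\infty,\om}(\xi,\ep,\hbar)$ of \eqref{somma1}. For such a symbol the formula \eqref{1erweyl} reduces to the standard functional calculus of the commuting operators $D_k$, whose joint spectrum on $L^2(\T^l)$ is $\{\hbar n\,|\,n\in\Z^l\}$ with orthonormal eigenbasis $e_n(x):=(2\pi)^{-l/2}e^{i\la n,x\ra}$. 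Thus $B_\infty(D;\ep,\hbar)\,e_n=\B_{\infty,\om}(n\hbar,\ep,\hbar)\,e_n$, and unitary invariance of the spectrum gives the exact formula \eqref{EQF} at once, verifying condition (ii) of Definition \ref{quant}.

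Step two is the $\infty$-smoothness. Fix $r\in\Na$. For $|\ep|<\ep^\ast(\tau,r)$, Theorem \ref{regolarita} and the estimate \eqref{stimaG1} provide uniform $L^\infty$-bounds on $\partial_\hbar^\gamma \B_{\infty,\om}(\xi,\ep,\hbar)$ for $\gamma=0,\ldots,r$ and $(\xi,\hbar)\in\R^l\times [0,1]$. A standard Taylor expansion at $\hbar=0$ then yields an asymptotic expansion to order $r$ in $\hbar$, uniform in $\xi$ on compacts (in fact uniformly on all of $\R^l$). Since $r$ is arbitrary, $\B_{\infty,\om}$ furnishes an $\infty$-smooth exact QF in the sense of Definition \ref{quant}. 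The only real point of care — and the only reason this is not literally a restatement of the previous two theorems — is the bookkeeping in the radius of convergence: the order-$r$ expansion in $\hbar$ requires shrinking the admissible $\ep$-disk to $\ep^\ast(\tau,r)$ of \eqref{epastr}, reflecting the loss encoded in Theorem \ref{regolarita}. No genuine obstacle appears beyond this matching.
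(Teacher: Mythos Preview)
Your proof is correct and matches the paper's approach: the paper states the corollary as ``a straightforward consequence of the above Theorems'' (namely Theorem \ref{mainth} and Theorem \ref{regolarita}) without giving a separate argument, and you have simply fleshed out this consequence via (A1)--(A2) and the action of $B_\infty(D;\ep,\hbar)$ on the Fourier basis. The only point worth flagging is cosmetic: the radius in the corollary statement should read $\ep^\ast(\tau,r)$ rather than $\ep^\ast(\tau,k)$, and your bookkeeping in Step two already handles this correctly.
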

%\newpage
{\bf Remarks}
\begin{itemize}
 \item[(i)]
(\ref{stimaG1}) and (\ref{EQF}) entail also that the Einstein-Brillouin-Keller (EBK) quantization formula:
\be
\label{EBK}
\lambda_{n,\ep}^{EBK}(\hbar):=\L_\om(n\hbar)+\sum_{s=1}^\infty \B_s(\L_\om(n\hbar))\ep^s=\B_{\infty,\om}(n\hbar,\ep),\quad n\in\Z^l
\ee
reproduces here ${\rm Spec}(H_\ep)$ up to order $\hbar$.
\item[(ii)]
Apart the  classical Cherry theorem  
 yielding convergence of the Birkhoff normal form for smooth perturbations of  the harmonic flow with {\it complex}  frequencies when $l=2$ (see e.g. \cite{SM}, \S 30; the uniform convergence of the QNF under these conditions is proved in  \cite{GV}), no simple convergence criterion seems to be known for the QNF nor for the classical NF as well.  (See e.g.\cite{PM}, \cite{Zu}, \cite{St} for  reviews on convergence of normal forms).  Assumptions (1) and (2) of Theorem \ref{mainth} entail Assertion (A2) above. Hence they represent, to our knowledge, a first  explicit convergence criterion for the NF. 
 \item[(iii)] In comparison to earlier results on QNF and quantization 
 formulas \cite{Sj}, \cite{BGP}, \cite{Po1}, \cite{Po2}, we remark that the present ones 
  are  {\it exact} and  {\it purely quantum}:
  i.e. it they are valid for $\hbar$ fixed, and not only  asymptotically as $\hbar \to 0$ 
  modulo 
  an error term of order $\hbar^\infty$ or $e^{-C/\hbar}$ . 
 \end{itemize}
 Remark that  $\L_\om(\xi)$ is also  the form taken by harmonic-oscillator Hamiltonian  in $\R^{2l}$, 
 $$
  \P_0(\eta,y;\om):= \sum_{s=1}^l\om_s(\eta^2_s+y_s^2), \quad (\eta_s,y_s)\in\R^2,\quad s=1,\ldots,l
  $$ 
  if expressed  in terms of the action variables $\xi_s>0, \,s=1,\ldots,l$, where
 $$
 \xi_s:=\eta^2_s+y_s^2=z_s\overline{z}_s, \quad z_s:=y_s+i\eta_s. 
 $$
 {Assuming} (\ref{DC}) {\it and}  the property 
 \be
 \label{Rk1}
\B_k(\xi)=(\F_k\circ\L_\om(\xi))=\F_{k}(\sum_{s=1}^l \om_s z_s\overline{z}_s), \quad k=0,1,\ldots
\ee
  R\"ussmann \cite{Ru} (see also \cite{Ga}) proved convergence of the Birkhoff NF if the perturbation $\V$, expressed as a function of $(z,\overline{z})$, is {\it in addition} holomorphic  at the origin in $\C^{2l}$.   No explicit condition   on $\V$ seems to be known ensuring {\it both} (\ref{Rk1}) and the holomorphy. In this case instead
 we {\it prove} that the assumption $\V(\xi,x)=\V(\L_\om(\xi),x)$ entails  (\ref{Rk1}), uniformly in  $\hbar\in [0,1]$; namely,  we construct $\F_s(t;\hbar):\R\times [0,1]\to\R$ such that:
\be
\label{Rk}
\B_s(\xi;\hbar)=\F_s(\L_\om(\xi);\hbar):=\F_{\om,s}(\xi;\hbar), \quad s=0,1,\ldots
\ee
The conditions of Theorem \ref{mainth} cannot however be transported to R\"ussmann's case: the map 
\vskip 6pt\noindent
$$
{\mathcal T}(\xi,x)=(\eta,y):=  \begin{cases} \eta_i=-\sqrt{\xi_i}\sin x_i, 
\\
y_i=\sqrt{\xi_i}\cos x_i, \end{cases}\quad i=1,\ldots,l,
$$
\vskip 6pt\noindent
 namely, the inverse transformation  into action-angle variable,  is defined only on $\R_+^l\times\T^l$ and does not preserve the analyticity at the origin.
On the other hand,  ${\mathcal T}$ is an analytic, canonical map between $\R_+^l\times\T^l$ and $\R^{2l}\setminus\{0,0\}$. Assuming for the sake of simplicity  $\V_0=0$ the image of $\H_\ep$ under ${\mathcal T}$ is:
\bea
\label{H0}
(\H_\ep \circ {\mathcal T})(\eta,y)= \sum_{s=1}^l\om_s(\eta^2_s+y_s^2)+\ep (\V\circ {\mathcal T})(\eta,y):=\P_0(\eta,y)+\ep \P_1(\eta,y)
\eea
where
\bea
&&
\label{H1}
\P_1(\eta,y)=(\V\circ {\mathcal T})(\eta,y)=\P_{1,R}(\eta,y)+\P_{1,I}(\eta,y), \;(\eta,y)\in\R^{2l}\setminus\{0,0\}. 
\eea
\bea
&&
\nonumber
\P_{1,R}(\eta,y)=\frac12\sum_{k\in\Z^l}(\Re{\V}_k\circ\H_0)(\eta,y)\prod_{s=1}^l \left(\frac{\eta_s-iy_s}{\sqrt{\eta^2_s+y_s^2}}\right)^{k_s}
\\
\nonumber
&&
\P_{1,I}(\eta,y)=\frac12\sum_{k\in\Z^l} (\Im{\V}_k\circ\H_0)(\eta,y)\prod_{s=1}^l \left(\frac{\eta_s-iy_s}{\sqrt{\eta^2_s+y_s^2}}\right)^{k_s}
\end{eqnarray}
\vskip 4pt\noindent
If $\V$ fulfills Assumption (H3) of Theorem \ref{mainth}, both these series converge uniformly  in any compact of $\R^{2l}$ away from the origin  and $\P_1$ is holomorphic  on $\R^{2l}\setminus\{0,0\}$. 
Therefore Theorem \ref{mainth}  immediately entails a convergence criterion for the Birkhoff normal form generated by perturbations  holomorphic away from the origin. We state it under the form of a corollary: 
\begin{corollary}
\label{mainc}
{\rm (A convergence criterion for the Birkhoff normal form)} 
Under the assumptions of Theorem \ref{mainth} on $\om$ and $\V$, consider on $\R^{2l}\setminus\{0,0\}$ the holomorphic Hamiltonian family  $P_\ep(\eta,y):=\P_0(\eta,y)+\ep\P_1(\eta,y)$, $\ep\in\R$, where $\P_0$ and $\P_1$ are defined by (\ref{H0},\ref{H1}).  Then the Birkhoff normal form of $H_\ep$ is uniformly convergent on any compact of  $\R^{2l}\setminus\{0,0\}$ if $|\ep|<\ep^\ast (\gamma,\tau)$. 
\end{corollary}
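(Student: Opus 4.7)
The plan is to transport the classical normal form provided by Theorem \ref{mainth} back to $\R^{2l}\setminus\{0,0\}$ via the canonical map $\mathcal{T}$. Since Theorem \ref{mainth} establishes uniform convergence of the QNF on $\hbar \in [0,1]$, specializing at $\hbar = 0$ yields the classical normal form
\[
\B_{\infty,\om}(\xi;\ep,0) = \L_\om(\xi) + \sum_{k=1}^\infty \B_k(\xi;0)\ep^k,
\]
with radius of convergence at least $\ep^\ast(\tau)$, together with an associated generating function $\W_\infty(\xi,x;\ep,0)$ whose symbol converges uniformly on $\R^l \times \T^l$ in the sense of \eqref{convunifQ1}.

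Next I would invoke the structural identity \eqref{Rk}, namely $\B_k(\xi;\hbar) = \F_k(\L_\om(\xi);\hbar)$, which says that the classical normal form depends on $\xi$ only through the linear combination $\L_\om(\xi)=\sum_s\om_s\xi_s$. Now $\mathcal{T}^{-1}$ sends $(\eta,y) \in \R^{2l}\setminus\{0,0\}$ to $(\xi,x)\in\R_+^l\times\T^l$ with $\xi_s = \eta_s^2 + y_s^2$, and therefore $\L_\om(\xi)=\sum_s\om_s(\eta_s^2+y_s^2)=\P_0(\eta,y)$. Hence each $\B_k(\xi;0)$ pulls back under $\mathcal{T}$ to a smooth function of $\P_0$ alone, which is exactly the form required of a Birkhoff normal form for the harmonic oscillator Hamiltonian $\P_0$.

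Third, since $\mathcal{T}:\R_+^l\times\T^l\to\R^{2l}\setminus\{0,0\}$ is a real-analytic canonical bijection and by construction $\H_\ep\circ\mathcal{T}=P_\ep$ outside the origin, the time-$1$ flow of the pulled-back generating Hamiltonian $\W_\infty(\cdot;\ep,0)\circ\mathcal{T}^{-1}$ conjugates $P_\ep$ to the pulled-back normal form, producing a genuine Birkhoff normalization of $P_\ep$ on $\R^{2l}\setminus\{0,0\}$. The uniform bound \eqref{convunifQ1} at $\hbar=0$ transports through $\mathcal{T}^{-1}$: on any compact $K\subset\R^{2l}\setminus\{0,0\}$, the set $\mathcal{T}^{-1}(K)$ is a compact of $\R_+^l\times\T^l$ on which $\mathcal{T}$ and all its derivatives are bounded, so uniform convergence on $K$ follows at once from \eqref{convunifQ1}–\eqref{convunifQ2}. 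The same holds for the normal form series $\sum_k\F_k(\P_0(\eta,y);0)\ep^k$ by \eqref{convunifQ2}.

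The only obstruction in this scheme is precisely the origin: $\mathcal{T}^{-1}$ has a square-root singularity at $\{\xi_s=0\}$, so no information can be transferred about the behaviour of the Birkhoff series near $(\eta,y)=0$. This matches the observation in \eqref{H0}--\eqref{H1} that $\P_1=\V\circ\mathcal{T}$ is itself only holomorphic on $\R^{2l}\setminus\{0,0\}$, so convergence cannot be expected at the unperturbed fixed point. Accordingly the statement is restricted to compacts of $\R^{2l}\setminus\{0,0\}$, and beyond this geometric observation no further estimate is required: the corollary is essentially a pull-back of Theorem \ref{mainth} at $\hbar=0$, using the special dependence \eqref{Rk} of $\B_k$ on $\L_\om$.
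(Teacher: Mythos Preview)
Your argument is correct and matches the paper's approach: the paper presents this corollary as an immediate consequence of Theorem~\ref{mainth} via the analytic canonical map $\mathcal{T}$, noting only that $\P_1$ is holomorphic on $\R^{2l}\setminus\{0,0\}$ and that the classical normal form (Assertion (A3)) is recovered at $\hbar=0$. You have supplied more detail than the paper does, in particular the explicit identification $\L_\om(\xi)=\P_0(\eta,y)$ via \eqref{Rk} and the compactness argument for transporting the uniform bounds through $\mathcal{T}^{-1}$, but the underlying idea is identical.
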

\vskip 0.5cm\noindent
%%%%%%%%%%%%%%

\subsection{Strategy of the paper}
The proof of Theorem \ref{mainth} rests on an implementation  in the quantum context  of R\"ussmann's 
argument\cite{Ru} yielding convergence of the KAM iteration when the complex variables $(z,\overline{z})$ 
belong to an open neighbourhood of the origin in $\C^{2l}$.  Conditions (\ref{DC}, \ref{Rk}) 
prevent the occurrence of accidental degeneracies among eigenvalues at any  step of the quantum 
KAM iteration, in the same way as they prevent the formation of resonances at the same step in  the classical case.  However, the global nature of quantum mechanics prevents phase-space localization;  
therefore, and this is the main difference, at each step the coefficients of the homological equation   for the operator symbols not only have an additional dependence on $\hbar$ but also have to be controlled up to infinity.  
These difficulties are  overcome by exploiting the closeness to the identity of the whole procedure, introducing adapted spaces of symbols i(Section \ref{not}), which account also for  
 the 
properties of differentiability with respect to the Planck constant.   The link between quantum and classical settings
 is provided by a sharp (i.e. without $\hbar^\infty$ approximation) Egorov Theorem established in section 
\ref{sectionegorov}. 
Estimates for the solution of the quantum homological equation and their recursive properties are 
obtained in sections \ref{hom} (Theorem \ref{homo}) and \ref{towkam} (Theorem \ref{resto})  respectively. Recursive estimates are established in Section \ref{recesti} (Theorem \ref{final})
and the proof of our main result is completed in section \ref{iteration}. The link with the usual construction of the quantum normal form described in  Appendix.

\vskip 1cm\noindent
\section{Norms and first estimates}
\label{not} 
\setcounter{equation}{0}%
\setcounter{theorem}{0}% 

  Let $m,l=1,2,\dots$. For $(\xi,x,\hbar)\mapsto\F(\xi,x;\hbar)\in C^\infty(\R^m\times\T^l\times [0,1]; \C)$  
 and $ (\xi,\hbar)\mapsto\G(\xi;\hbar)\in C^\infty(\R^m\times [0,1]; \C)$, consider, for $p\in\R^m$ and $q\in\Z^m$ the following Fourier transforms 
 \vskip 6pt\noindent
 \begin{definition}[Fourier transforms]\label{deffour}
 \be
 \widehat{\G}(p;\hbar)=\frac1{(2\pi)^{m/2}}\int_{\R^m}\G(\xi;\hbar)e^{-i\la p,\xi\ra}\,dx
\ee
 \be
 \truc{\F}(\xi,q;\hbar):=\frac1{(2\pi)^{m/2}}\int_{\T^l}\F(\xi,x;\hbar)e^{-i\la q,x\ra}\,dx .
\quad 
\ee 
\end{definition}
Note that
\be
\label{FE1}
\F(\xi,x;\hbar)=\sum_{q\in\Z^l}\truc{\F}(\xi,q;\hbar)e^{-i\la q,x\ra}
\ee
\be
\label{FE2} 
\widehat{\F}(p,q;\hbar)=\frac1{(2\pi)^{m/2}}\int_{\R^m}\truc{\F}(\xi,q;\hbar)e^{-i\la p,\xi\ra}\,dx
\ee
 \vskip 10pt\noindent
 It is convenient to rewrite the Fourier representations (\ref{FE1}, \ref{FE2}) under the form a single Lebesgue-Stieltjes integral. Consider the product measure on $\R^m\times \R^l$:
\bea
\label{pm1}
&&
d\lambda (t):=dp\,d\nu(s),  \quad t:=(p,s)\in\R^m\times \R^l;
\\
\label{pm2}
&&
dp:=\prod_{k=1}^m\,dp_k;\quad d\nu(s):=\prod_{h=1}^l \sum_{q_h\leq s_h} \delta (s_h-q_h),
 \;q_h\in\Z, h=1,\ldots,l
\eea
Then:
\be
\label{IFT}
\F(\xi,x;\hbar)=\int_{\R^m\times\R^l}\,\widehat{\F}(p,s;\hbar)e^{i\la p,\xi\ra +i\la s,x\ra}\,d\lambda(p,s)
\ee
\begin{definition}[Norms I]
{\it   For $\rho\geq 0$, $\sigma\geq 0$, we 
 introduce the weighted norms }
 \vskip 3pt\noindent
\bea
\label{norma1}
|\G|^\dagger_{\sigma}&:=&\max_{\hbar\in [0,1]}\|\widehat{\G}(.;\hbar)\|_{L^1(\R^m,e^{\sigma |p|}dp)}=\max_{\hbar\in [0,1]}\int_{\R^l}\|\widehat{\G}(.;\hbar)\|\,e^{\sigma |p|}\,dp.
\\
\label{norma1k}
|\G|^\dagger_{\sigma,k}&:=&\max_{\hbar\in [0,1]}\sum_{j=0}^k\|(1+|p|^2)^{\frac{k-j}{2}}\partial^j_\hbar\widehat{\G}(.;\hbar)\|_{L^1(\R^m,e^{\sigma |p|}dp)};\quad  |\G|^\dagger_{\sigma;0}:=|\G|^\dagger_{\sigma}.
\eea
\end{definition}
\begin{remark}
By noticing that $\vert p\vert\leq\vert p^\prime-p\vert+\vert p^\prime\vert$ and that, for 
$x\geq 0$, $\ds x^je^{-\delta x}\leq \frac 1 e\left(\frac j{\delta}\right)^j$, we immediately  get the inequalities
\be\label{plus}
\vert \F\G\vert^\dagger_{\sigma}\leq\vert \F\vert_{\sigma}^\dagger\cdot \vert \G\vert_{\sigma}^\dagger, 
\ee
\vskip 6pt\noindent
\be
\label{diff}
\vert (I-\Delta^{j/2})\F\vert_{\sigma-\delta}^\dagger\leq \frac1 e\left(\frac j\delta\right)^j\vert \F\vert_\sigma^\dagger, \quad k\geq 0.
\ee
\end{remark}
Set now for $ k\in\Na\cup\{0\} $:
\be\label{muk}
\mu_{k}(t):=(1+|t|^{2})^{\frac k 2}=(1+|p|^{2}+|s|^{2})^{\frac k 2}.
\ee
and note that
\be
 \mu_k(t-t^\prime)\leq 2^{\frac k 2} \mu_k(t)\mu_k(
 t ^\prime).
\ee
because $|x-x^\prime|^2\leq 2(|x|^2+|x^\prime|^2)$. 
\begin{definition}[Norms II] {\it  Consider $\F(\xi,x;\hbar)\in C^\infty(\R^m\times \T^l\times[0,1];\C)$, with Fourier expansion 
\be
\label{FF}
\F(\xi,x;\hbar)=\sum_{q\in\Z^l}\,\truc{\F}(\xi,q;\hbar)e^{i\la q,x\ra}
\ee
\begin{itemize}
\item
[(1)]
 Set:
\bea
\label{sigmak}
\Vert \F\Vert^\dagger_{\rho,k}:=\max_{\hbar\in [0,1]}\sum_{\gamma=0}^k
\int_{\R^m\times \R^l}\vert \mu_{k-\gamma}(p,s)\partial^\gamma_\hbar\widehat{\F}(p,s;\hbar)\vert e^{\rho(\vert s\vert+\vert p\vert)}\,d\lambda(p,s).
\eea
\vskip 4pt\noindent
\item
[(2)]
Let ${\mathcal O}_\omega$ be the set of functions ${\Phi}:\R^l\times\T^l\times[0,1]\to\C$ such that $\Phi(\xi,x;\hbar)=\F(\L_\omega(\xi),x;\hbar)$ for some $\F:\ \R\times\T^l\times [0,1]\to \C$. 
Define, for $\Phi\in {\mathcal O}_\omega$:
\bea\label{sigom}
\Vert \Phi\Vert_{\rho,k}:=\max_{\hbar\in [0,1]}\sum_{\gamma=0}^k
\int_{\R}\vert \mu_{k-\gamma}( p\omega,q)
\partial^\gamma_\hbar\widehat{\F}(p,s;\hbar)\vert e^{\rho(\vert s\vert+\vert p\vert}\,d\lambda(p,s).
\eea
\vskip 4pt\noindent
\item
[(3)] Finally we denote $Op^W(\F)$ the Weyl quantization of $\F$ recalled in Section \ref{sectionweyl} and
\bea
\label{normsymb'}
\J^\dagger_k(\rho)&=&\{\F \,|\,\Vert \F\Vert^\dagger_{\rho,k}<\infty\},
\\
\label{normop'}
J^\dagger_k(\rho)&=&\{Op^W(\F)\,|\,\F\in\J^\dagger_k(\rho)\},
\\
\label{normsymb}
\J_k(\rho)&=&\{\F\in {\mathcal O}_\omega\,|\,\Vert \F\Vert_{\rho,k}<\infty\},
\\
\label{normop}
J_k(\rho)&=&\{Op^W(\F)\,|\,\F\in\J_k(\rho)\}.
\eea 
\end{itemize}}
\end{definition}

Finally  we denote: $L^1_\sigma(\R^m):=L^1(\R^m,e^{\sigma |p|}dp)$.
%\end{definition}
\begin{remark}
Note that, if $\F(\xi,x,\hbar)$ is independent of $x$, i.e. 
$\truc{\F}(\xi,q,\hbar)=\F(\xi,\hbar)\delta_{q,0}$, then:
\be
\label{normeid}
\|\F\|^\dagger_{\rho,k}=|\F|^\dagger_{\rho,k}; \quad \|\F\|_{\rho,k}=|\F|_{\rho,k}
\ee
while in general
\bea
 &&
 \|\F\|_{\rho,k}\leq \|\F\|_{\rho^\prime,k^\prime}
 \quad {\rm whenever}\; k\geq k^\prime,\,\rho\leq \rho^\prime;
\eea
\end{remark}
\begin{remark} (Regularity properties)
\vskip 4pt\noindent
Let $\F\in \J_k^\dagger(\rho), k\geq 0$.  Then:
\begin{enumerate}
 \item There exists $K(\alpha,\rho,k)$ such that
\be
\label{maggC}
\max_{\hbar\in [0,1]}\|\F(\xi,x;\hbar)\|_{C^\alpha(\R^m\times\T^l)}\leq K \|\F\|^\dagger_{\rho,k}, \quad \alpha\in\Na
\ee
and analogous statement for the norm $\|\cdot\|_{\rho,k}$. 
\item 
Let $\rho>0$, $k\geq 0$.  Then 
$\F(\xi,x;\hbar)\in C^k([0,1];C^\om(\{|\Im \xi|<\rho\}\times \{|\Im x|<\rho\})$ and
\vskip 4pt\noindent
\be
\label{supc}
\sup_{\{|\Im \xi|<\rho\}\times \{|\Im x|<\rho\}}|\F(\xi,x;\hbar)|\leq \|\F\|^\dagger_{\rho,k}.
\ee
\vskip 6pt\noindent
Analogous statements for $\F\in \J_k(\rho)$. 
\end{enumerate}
 \end{remark}
 We will show in section \ref{sectionweyl} that:
 \bea
\|Op^W(F)\|_{\mathcal B(L^2)}\leq \|\F\|_{\rho,k}\ \ \ \forall k,\ \rho >0.
 \eea
 In what follows we will often use the notation $\F$ also to denote the function $\F(\L_\om(\xi))$, and, correspondingly, $\|\F\|_{\rho,k}$ to denote $\|\F_\om\|_{\rho,k}$, because the indication of the belonging to $J$ or $J^\dagger$, respectively, is already sufficient to mark the distinction of the two cases.  
\begin{remark}
Without loss of generality we may assume:
\be
|\om |:=|\om_1|+\ldots+|\om_l |\leq 1
\ee
Indeed, the general case $|\om|=\alpha |\om^\prime|$, $|\om^\prime|\leq 1$, $\alpha>0$ arbitrary reduces to the former one just by  the rescaling $\ep\to \alpha\ep$. 
\end{remark}

 \vskip 1.0cm\noindent
    %%%%%%%%%%%%%%%%%%%
%%%%%%%%%%%%%%%%%%%
%%%%%%%%%%%%%
\section{Weyl quantization, matrix elements, commutator  estimates}\label{sectionweyl}
\renewcommand{\thetheorem}{\thesection.\arabic{theorem}}
\renewcommand{\theproposition}{\thesection.\arabic{proposition}}
\renewcommand{\thelemma}{\thesection.\arabic{lemma}}
\renewcommand{\thedefinition}{\thesection.\arabic{definition}}
\renewcommand{\thecorollary}{\thesection.\arabic{corollary}}
\renewcommand{\theequation}{\thesection.\arabic{equation}}
\renewcommand{\theremark}{\thesection.\arabic{remark}}
\setcounter{equation}{0}%
\setcounter{theorem}{0}% 
\subsection{Weyl quantization: action and matrix elements}
We sum up here the canonical (Weyl)  quantization procedure for functions (classical observables) 
defined on the phase space $\R^l\times\T^l$. In the present case it seems more convenient to 
consider the representation (unique up to unitary equivalences) of the natural Heisenberg group 
on $\R^l\times\T^l$. Of course this procedure yields the same quantization as the standard one 
via the Br\'ezin-Weil-Zak transform (see e.g. \cite{Fo}, \S 1.10) and has already been employed in \cite{CdV}, \cite{Po1},\cite{Po2}). 
\par
Let $\He_l(\R^l\times\R^l\times\R)$  be the Heisenberg group over $\ds \R^{2l+1}$ (see e.g.\cite{Fo}, Chapt.1).  Since the dual space of $\R^l\times\T^l$ under the Fourier transformation is $\R^l\times\Z^l$, 
the relevant Heisenberg group here is the subgroup of $\He_l(\R^l\times\R^l\times\R)$,   denoted by $\He_l(\R^l\times\Z^l\times\R)$,  defined as follows: 
\begin{definition}[Heisenberg group]
\label{HSG}
{\it Let $u:=(p,q), p\in\R^l, q\in\Z^l$, and let $ t\in\R$. Then $\He_l(\R^l\times\Z^l\times\R)$ is the subgroup of $\He_l(\R^l\times\R^l\times\R)$  topologically equivalent to $\R^l\times\Z^l\times\R$ with group law
\be
\label{HGL}
(u,t)\cdot (v,s)= (u+v, t+s+\frac12\Omega(u,v))
\ee
Here $\Omega(u,v)$ is the canonical $2-$form on $\R^l\times\Z^l$:}
\be
\label{2forma}
\Omega(u,v):=\la u_1,v_2\ra-\la v_1,u_2\ra
\ee
\end{definition}
$\He_l(\R^l\times\Z^l\times\R)$ is the Lie group generated via the exponential map from the Heisenberg Lie algebra ${\mathcal H}{\mathcal L}_l(\Z^l\times\R^l\times\R)$ defined as the vector space $\R^l\times\Z^l\times\R$ with Lie bracket
\be
\label{LA}
[(u,t)\cdot (v,s)]= (0, 0,\Omega(u,v))
\ee
The unitary representations of $\He_l(\R^l\times\Z^l\times\R)$  in $L^2(\T^l)$ are defined as follows
\be
\label{UR}
(U_\hbar(p,q,t)f)(x):=e^{i\hbar t +i\la q,x\ra+\hbar\la p.q\ra/2}f(x+\hbar p)
\ee
$\forall\,\hbar\neq 0$, $\forall\,(p,q,t)\in{\mathcal H}_l$, $\forall\,f\in L^2(\T^l)$. These representations fulfill the Weyl commutation relations
\be
\label{Weyl}
U_\hbar(u)^\ast =U_\hbar(-u), \qquad U_\hbar(u)U_\hbar(v)=e^{i\hbar\Omega(u,v)}U(u+v)
\ee
For any fixed $\hbar>0$  $U_\hbar$ defines the \Sc\ representation of the Weyl commutation relations, which also in this case is unique up to unitary equivalences (see e.g. \cite{Fo}, \S 1.10). 

Consider now a family of smooth phase-space functions indexed by $\hbar$,   $\A(\xi,x,\hbar):\R^l\times\T^l\times [0,1]\to\C$,  written under its Fourier representation
\vskip 4pt\noindent
\be
\label{FFR}
\A(\xi,x,\hbar)=\int_{\R^l}\sum_{q\in\Z^l}\widehat{\A}(p,q;\hbar)e^{i(\la p.\xi\ra +\la q,x\ra)}\,dp=\int_{\R^l\times \R^l}\widehat{\A}(p,s;\hbar)e^{i(\la p.\xi\ra +\la s,x\ra)}\,d\lambda(p,s)
\ee
\vskip 6pt\noindent
\begin{definition}[Weyl quantization]
\label{Qdef}
{\it The (Weyl) quantization of  $\A(\xi,x;\hbar)$  is the operator $A(\hbar)$ definde as}
\bea
\label{Wop}
&&
(A(\hbar)f)(x):=\int_{\R^l}\sum_{q\in\Z^l}\widehat{\A}(p,q;\hbar)U_\hbar(p,q)f(x)\,dp
\\
&&
\nonumber
= \int_{\R^l\times\R^l}\widehat{\A}(p,s;\hbar)U_\hbar(p,s)f(x)\,d\lambda(p,s)
 \quad f\in L^2(\T^l)
\eea
\end{definition}
\noindent
\begin{remark}
 Formula \eqref{Wop} can be also be written as
\be
\label{Wopq}
(A(\hbar)f)(x)=\sum_{q\in\Z^l}\mbox{ where }A(q,\hbar)f, \quad (A(q,\hbar)f)(x)=\int_{\R^l}\,\widehat{\A}(p,q;\hbar)U_\hbar(p,q)f(x)\,dp
\ee
\end{remark}
\noindent
From this we compute the action of $A(\hbar)$ on 
the canonical basis in $L^2(\T^l)$:
$$
e_m(x):=(2\pi)^{-l/2}e^{i\la m, x\ra}, \quad x\in\T^l, \;m\in\Z^l .
$$
We have:
\begin{lemma}  
\label{azione}
\be
\label{azioneAop}
A(\hbar)e_m(x)=  \sum_{q\in\Z^l}e^{i\la (m+q),x\ra}\truc{\A}(\hbar (m+q/2),q,\hbar) 
\ee
\end{lemma}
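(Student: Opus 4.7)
The plan is to derive the formula by direct substitution into Definition \ref{Qdef} and recognition of the resulting integral as an inverse Fourier transform of $\widehat{\A}$ evaluated at a shifted point. The proof should require nothing beyond the explicit representation (\ref{UR}) of the Heisenberg group and the Fourier inversion formulas of Definition \ref{deffour}.

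First I would compute $U_\hbar(p,q)e_m(x)$ explicitly. Substituting $e_m(x)=(2\pi)^{-l/2}e^{i\la m,x\ra}$ into (\ref{UR}) with $t=0$, and using that $f(x+\hbar p)$ carries the phase $e^{i\hbar\la m,p\ra}$, one gets
\be
U_\hbar(p,q)e_m(x)=(2\pi)^{-l/2}\,e^{i\la m+q,x\ra}\,e^{i\hbar\la p,\,m+q/2\ra},
\ee
so that the action of $U_\hbar(p,q)$ on $e_m$ is a pure phase times $e_{m+q}$.

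Next I would insert this into formula (\ref{Wop}) defining $A(\hbar)$. This yields
\be
(A(\hbar)e_m)(x)=\sum_{q\in\Z^l}\frac{e^{i\la m+q,x\ra}}{(2\pi)^{l/2}}\int_{\R^l}\widehat{\A}(p,q;\hbar)\,e^{i\hbar\la p,\,m+q/2\ra}\,dp.
\ee
The inner integral is exactly the inverse Fourier transform (in the $\xi$-variable) of $\widehat{\A}(\cdot,q;\hbar)$ evaluated at $\xi=\hbar(m+q/2)$. By the Fourier inversion formula associated with Definition \ref{deffour}, this equals $(2\pi)^{l/2}\truc{\A}(\hbar(m+q/2),q;\hbar)$, and the two normalization constants $(2\pi)^{\pm l/2}$ cancel, producing the claimed identity (\ref{azioneAop}).

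There is no serious obstacle: the only minor point to justify is the exchange of the $q$-summation and the $p$-integral, which is harmless because $\A\in \J_k^\dagger(\rho)$ (or more generally any assumption making $\widehat{\A}(p,q;\hbar)$ absolutely summable-integrable with weight $e^{\rho(|p|+|q|)}$) renders the double series absolutely convergent, and because $e_m\in L^2(\T^l)$ so the expression $U_\hbar(p,q)e_m$ is pointwise bounded uniformly in $p,q$. Everything else is a direct computation from the definitions.
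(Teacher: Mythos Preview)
Your proposal is correct and follows essentially the same computation as the paper's proof: the paper merely organizes the argument by first isolating the $q$-th piece $A(q,\hbar)e_m$ via (\ref{Wopq}) and then summing, whereas you carry out the full sum at once, but the underlying substitution of (\ref{UR}) and recognition of the $p$-integral as the inverse Fourier transform (\ref{FE2}) evaluated at $\xi=\hbar(m+q/2)$ are identical.
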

\begin{proof}
By \eqref{Wopq}, it is enough to prove that the action of $A(q,\hbar)$ is
\be
\label{azioneAq}
A(q,\hbar)e_m(x)=  e^{i\la (m+q),x\ra}\truc{\A}(\hbar(m+q/2),q,\hbar)
\ee
 Applying Definition \ref{Qdef} we can indeed  write:
\begin{eqnarray*}
&&
(A(q,\hbar)e_m)(x)=(2\pi)^{-l/2}\int_{\R^l}\widehat{A}(p,q;\hbar)e^{i\la q,x\ra+i\hbar \la p,q\ra/2}e^{i\la m,(x+\hbar  p)\ra}\,dp
\\
&&
=(2\pi)^{-l/2}e^{i\la (m+q),x\ra}\,\int_{\R^l}\widehat{A}(p;q,\hbar)e^{i\hbar \la p,(m+q/2)\ra}\,dp
=e^{i\la (m+q),x\ra}\truc{\A}(\hbar(m+q/2),q,\hbar).
\end{eqnarray*}
\end{proof}
We note for further reference an obvious consequence of \eqref{azioneAq}:
\be 
\label{ortogq}
\la A(q,\hbar)e_m,A(q,\hbar)e_n\ra_{L^2(\T^l)}=0,\;m\neq n;\quad 
\la  A(r,\hbar)e_m,A(q,\hbar)e_n\ra_{L^2(\T^l)}=0,\;r\neq q.
\ee
As in the case of the usual Weyl quantization, formula (\ref{Wop}) makes sense for tempered distributions
$\A(\xi,x;\hbar)$ \cite{Fo}. Indeed we prove in this context, for the sake of completeness, a simpler, but less general, version of the standard  Calderon-Vaillancourt criterion:
\begin{proposition}
Let $A(\hbar)$ by defined by (\ref{Wop}). Then
\vskip 8pt\noindent
\be
\label{CV}
\Vert A(\hbar)\Vert_{L^2\to L^2}\leq \frac{2^{l+1}}{l+2}\cdot \frac{\pi^{(3l-1)/2}}{\Gamma(\frac{l+1}{2})}\,\sum_{|\alpha|\leq 2k}\,\Vert \partial_x^k\A(\xi,x;\hbar)\Vert_{L^\infty(\R^l\times\T^l)}.
\ee
where
$$
k=\begin{cases} \frac{l}{2}+1,\quad l\;{\rm even}
\\
{}
\\
\frac{l+1}{2}+1,\quad l\;{\rm odd}.
\end{cases}
$$
\end{proposition}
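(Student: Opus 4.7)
My plan is to bound $\|A(\hbar)\|_{L^2 \to L^2}$ by expanding in the canonical Fourier basis $\{e_m\}_{m\in\Z^l}$ of $L^2(\T^l)$ and reducing the estimate to a convolution inequality on $\ell^2(\Z^l)$.

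First, I would use formula \eqref{azioneAop} of Lemma \ref{azione}: writing $f = \sum_m c_m e_m$ with $\|f\|^2_{L^2} = \sum_m |c_m|^2$, the output Fourier coefficients are
\[
(A(\hbar)f)_n \;=\; (2\pi)^{l/2} \sum_{q\in\Z^l} c_{n-q}\,\truc{\A}\bigl(\hbar(n-q/2),\, q;\, \hbar\bigr),
\]
so $A(\hbar)$ is a ``twisted convolution'' with kernel indexed by $q=n-m$. Setting $g(q):=\sup_{\xi\in\R^l}|\truc{\A}(\xi,q;\hbar)|$, one has $|(A(\hbar)f)_n|\leq(2\pi)^{l/2}(|c|*g)_n$, and Young's inequality on $\ell^2(\Z^l)$ yields
\[
\|A(\hbar)f\|_{L^2} \;\leq\; (2\pi)^{l/2}\,\|g\|_{\ell^1(\Z^l)}\,\|f\|_{L^2}.
\]

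Next I would bound $g(q)$ by integration by parts in $x$. Using $(1-\Delta_x)^k e^{-i\la q,x\ra}=(1+|q|^2)^k e^{-i\la q,x\ra}$ and integrating by parts $k$ times,
\[
(1+|q|^2)^k \truc{\A}(\xi,q;\hbar) \;=\; \frac{1}{(2\pi)^{l/2}}\int_{\T^l}\bigl[(1-\Delta_x)^k\A(\xi,x;\hbar)\bigr] e^{-i\la q,x\ra}\,dx,
\]
hence
\[
g(q)\;\leq\;\frac{(2\pi)^{l/2}}{(1+|q|^2)^k}\,\bigl\|(1-\Delta_x)^k\A\bigr\|_{L^\infty(\R^l\times\T^l)},
\]
and expanding $(1-\Delta_x)^k$ via the multinomial formula controls this by $\sum_{|\alpha|\leq 2k}\|\partial_x^\alpha\A\|_{L^\infty}$. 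The choice of $k$ in the statement is the smallest integer guaranteeing $2k>l$ (with a slight margin for the odd case), which is precisely what is needed so that $\sum_{q\in\Z^l}(1+|q|^2)^{-k}$ converges.

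The hard part will be to extract the precise constant $\tfrac{2^{l+1}}{l+2}\cdot\tfrac{\pi^{(3l-1)/2}}{\Gamma((l+1)/2)}$ stated in the proposition. This reduces to a sharp comparison of the lattice sum $\sum_{q\in\Z^l}(1+|q|^2)^{-k}$ with the integral $\int_{\R^l}(1+|x|^2)^{-k}dx = \pi^{l/2}\Gamma(k-l/2)/\Gamma(k)$, obtained by a cube-partition argument exploiting radial monotonicity of $(1+|x|^2)^{-k}$; combined with the two factors of $(2\pi)^{l/2}$ (one from the Fourier normalization in the matrix element, one from the integration by parts) and the Beta/Gamma evaluation of the radial integral, this produces the stated geometric constant. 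The remaining steps (Young's inequality, integration by parts, expansion of $(1-\Delta_x)^k$) are standard and straightforward.
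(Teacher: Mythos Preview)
Your proposal is correct and follows essentially the same three-step strategy as the paper: (i) reduce the operator norm to the $\ell^1$-sum $\sum_{q}\sup_{\xi}|\truc{\A}(\xi,q;\hbar)|$, (ii) gain decay in $q$ by integration by parts in $x$, (iii) bound the lattice sum by the corresponding radial integral. Two tactical differences are worth noting. First, to reach (i) the paper argues via the orthogonality relations \eqref{ortogq}, while you use Young's inequality for the dominating convolution; your route is in fact cleaner, since the orthogonality claim $\langle A(r,\hbar)e_m,A(q,\hbar)e_n\rangle=0$ for $r\neq q$ and \emph{all} $m,n$ is not literally true (it fails when $m+r=n+q$), whereas the convolution bound after the pointwise majorization $|\truc{\A}(\hbar(n-q/2),q)|\leq g(q)$ is unimpeachable. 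Second, the paper performs the integration by parts with the operator $1+(-\Delta_x)^k$, giving the denominator $1+|q|^{2k}$ and hence the radial integral $\int_0^\infty \rho^{l-1}(1+\rho^{2k})^{-1}\,d\rho$, while you use $(1-\Delta_x)^k$, giving $(1+|q|^2)^{k}$ and the Beta-type integral $\int_{\R^l}(1+|x|^2)^{-k}\,dx$. Both choices yield convergent sums for the stated $k$, but they produce \emph{different} explicit constants; the precise constant $\tfrac{2^{l+1}}{l+2}\cdot\tfrac{\pi^{(3l-1)/2}}{\Gamma((l+1)/2)}$ in the proposition comes from the paper's weight $1+|q|^{2k}$ together with the crude bound $\sum_q(1+|q|^{2k})^{-1}\leq 2\int_{\R^l}(1+|u|^{2k})^{-1}\,du$ and the elementary estimate of that integral. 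If you want to reproduce the stated constant verbatim, switch to $1+(-\Delta_x)^k$; otherwise your argument gives an equally valid (and arguably tidier) Calder\'on--Vaillancourt bound with a different numerical factor.
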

\begin{proof}
 Consider the Fourier expansion
 $$
 u(x)=\sum_{m\in\Z^l}\,\widehat{u}_me_m(x),\quad u\in L^2(\T^l).
 $$
Since:
 $$
 \|A(q,\hbar)\widehat{u}_me_m\|^2=|\truc{\A}(\hbar(m+q/2),q,\hbar)
|^2\cdot |\widehat{u}_m|^2
$$
  by Lemma \ref{azione} and \eqref{ortogq} we get:
\begin{eqnarray*}
\|A (\hbar)u\|^2&\leq & \sum_{(q,m)\in\Z^l\times\Z^l}\|A(q,\hbar)\widehat{u}_m e_m\|^2
= \sum_{(q,m)\in\Z^l\times\Z^l}|\A(\hbar (m+q/2),q,\hbar)|^2\cdot |\widehat{u}_m|^2
\\
&\leq&
 \sum_{q\in\Z^l}\,\sup_{\xi\in\R^l}|\A(\xi,q,\hbar)
|^2\sum_{m\in\Z^l}|\widehat{u}_m|^2
=
 \sum_{q\in\Z^l}\,\sup_{\xi\in\R^l}|\A(\xi,q,\hbar)
|^2\|u\|^2
\\
&\leq&
\big[ \sum_{q\in\Z^l}\,\sup_{\xi\in\R^l}|\A(\xi,q,\hbar)
|\big]^2\|u\|^2
\end{eqnarray*}
Therefore:
\[
\Vert A(\hbar)\Vert_{L^2\to L^2} \leq \sum_{q\in\Z^l}\,\sup_{\xi\in\R^l}\vert\A(\xi,q,\hbar)\vert.
\]
Integration by parts entails that, for $k\in \Na$, and $\forall \,g\in  C^\infty(\T^l)$:
\vskip 8pt\noindent
\begin{eqnarray*}
&&
\left |\int_{\T^l}e^{i\la q,x\ra}g(x)dx\right |=\frac 1 {1+|q|^{2k}}\left |\int_{\T^l} e^{i\la q,x\ra}(1+(-\triangle_x)^k) g(x)dx\right |
\\
&&
\leq \frac 1 {1+\vert q\vert^{2k}}(2\pi)^l\sup_{\T^l}\sum_{|\alpha|\leq 2k}\vert \partial_x^\alpha g(x)\vert .
\end{eqnarray*}
\vskip 8pt\noindent
Let us now take: 
\be
\label{kappa}
k=\begin{cases} \frac{l}{2}+1,\quad l\;{\rm even}
\\
{}
\\
\frac{l+1}{2}+1,\quad l\;{\rm odd}
\end{cases}
\Longrightarrow 
\begin{cases} 2k-l+1=3,\quad l\;{\rm even}
\\
2k-l+1=2,\quad l\;{\rm odd}
\end{cases}
\ee
\vskip 4pt\noindent
Then $2k-l+1\geq 2$, and hence:
$$
\sum_{q\in\Z^l}\,\frac1{1+\vert q\vert^{2k}}\leq 2\int_{\R^l}\,\frac{du_1\cdots du_l}{1+\|u\|^{2k}}\leq 2\frac{\pi^{(l-1)/2}}{\Gamma(\frac{l+1}{2})}\int_0^\infty\frac{\rho^{l-1}}{1+\rho^{2k}}\,d\rho.
$$
Now:
\begin{eqnarray*}
&&
\int_0^\infty\frac{\rho^{l-1}}{1+\rho^{2k}}\,d\rho =\frac 1 {2k}\int_0^\infty\,\frac{u^{l/2k -1}}{1+u}\,du 
%=\int_0^1\,\frac{u^{l/2k -1}}{1+u}\,du+\int_1^\infty\,\frac{u^{l/2k -1}}{1+u}\,du
\\
&&
\leq \frac 1 {2k}\left(\int_0^1\, u^{l/2k -1}\,du+\int_1^\infty\,{u^{l/2k -2}}\,du\right)=\frac{1}{(4k-l)(2k-l)}
\end{eqnarray*}
 This allows us to conclude:
\begin{eqnarray*}
\sum_{q\in\Z^l}\,\sup_\xi\vert\A(\xi,q,\hbar)\vert &\leq &(2\pi)^l
\sum_{|\alpha|\leq 2k}\Vert \partial_x^{\alpha}\A(\xi,x;\hbar)\Vert_{L^\infty(\R^l\times\T^l)}\cdot \sum_{q\in\Z^l}\,\frac1{1+\vert q\vert^{2k}}
\\
&\leq & 2^{l+1}\cdot \frac{\pi^{(3l-1)/2}}{\Gamma(\frac{l+1}{2})}\frac{1}{l+2}\sum_{|\alpha|\leq 2k}\,\Vert \partial_x^k\A(\xi,x;\hbar)\Vert_{L^\infty(\R^l\times\T^l)}.
\end{eqnarray*}
with $k$ given by (\ref{kappa}). This proves the assertion. 
\end{proof}
\begin{remark}
Thanks to Lemma \ref{azione} we immediately see that, 
when $\A(\xi, x,\hbar)=\F(\L_\omega(\xi),x;\hbar)$, 
\vskip 8pt\noindent
\bea
\label{quant2}
&&
A(\hbar)f=\int_{\R}\sum_{q\in\Z^l}\widehat{\F}(p,q;\hbar)U_h(p\om ,q)f\,dp
\\
\nonumber
&&
= \int_{\R}\sum_{q\in\Z^l}\widehat{\F}(p,q;\hbar)e^{i\la q,x\ra+i\hbar p\la\om,q\ra/2}f(x+\hbar p\om)\,dp\quad f\in L^2(\T^l)
\eea
\vskip 5pt\noindent
where, again, $p\om:=(p\om_1,\dots,p\om_l)$. Explicitly,  \eqref{azioneAq} and \eqref{azioneAop} become:
\begin{eqnarray}
\label{azioneAom}
&&
A(\hbar)e_m(x)=  \sum_{q\in\Z^l}e^{i\la (m+q),x\ra}\truc{\F}(\hbar \la\om,(m+q/2)\ra,q,\hbar) 
\\
&&
\label{azioneAqom}
A(q,\hbar)e_m(x)=  e^{i\la (m+q),x\ra}\truc{\F}(\hbar\la\om,(m+q/2)\ra,q,\hbar)
\end{eqnarray}
\end{remark}
\begin{remark} If $\A$ does not depend on $x$, then $\truc{\A}(\xi,q,\hbar)=0, q\neq 0$, and (\ref{azioneAop}) reduces to the standard (pseudo) differential action
\bea
(A(\hbar) u)(x)=\sum_{m\in\Z^l}\overline{\A}(m\hbar
,\hbar) \widehat{u}_m e^{i\la m,x\ra}=\sum_{m\in\Z^l}\overline{\A}(-i\hbar\nabla,\hbar) \widehat{u}_m e^{i\la m,x\ra}
 \eea
 because $-i\hbar\nabla e_m=m\hbar e_m$.  On the other hand, if $\A$ does not depend on $\xi$ (\ref{azioneAop}) reduces to the standard multiplicative action
 \be
  (A (\hbar)u)(x)=\sum_{q\in\Z^l}\truc{\A}(q,\hbar)e^{i\la q,x\ra}\sum_{m\in\Z^l}\widehat{u}_m e^{i\la m,x\ra}=\A(x,\hbar)u(x)
 \ee
\end{remark}
\noindent
\begin{corollary} 
\label{corA}
Let $A(\hbar): L^2(\T^l)\to L^2(\T^l)$ be defined by \eqref{Wop} (Definition \ref{Qdef}) and $A(q,\hbar)$ by \eqref{Wopq}. Then:
\begin{enumerate}
\item
$\forall\rho\geq 0, \forall\,k\geq 0$ we have:
\be\label{stimz}
\Vert A(\hbar)\Vert_{L^2\to L^2}\leq\Vert\A\Vert^\dagger_{\rho,k}
\ee
and, if $\A(\xi, x,\hbar)=\A(\L_\omega(\xi),x;\hbar)$
\be\label{stimg}
\Vert A(\hbar)\Vert_{L^2\to L^2}\leq\Vert\A\Vert_{\rho,k}.
\ee
\item
\bea
\label{elm44}
&&
\la e_{m+s}, A(q,\hbar)e_m\ra =\delta_{q,s}\truc{\A}(
(m+q/2)\ra\hbar,q,\hbar)
\\
&&
\label{elm55}
\la e_{m+s},A(\hbar)e_m\ra =\truc{\A}((m+s/2)\hbar,s,\hbar)={(2\pi)^{-\frac m 2}}\int\limits_{\T^m}\A((m+s/2)\hbar,x,\hbar)e^{-i\langle s,x\rangle}dx
\eea
and, if $\A(\xi, x,\hbar)=\F(\L_\omega(\xi),x;\hbar)$
\bea
\label{elm4}
&&
\la e_{m+s}, A(q,\hbar)e_m\ra =\delta_{q,s}\truc{\F}(\la\om, (m+q/2)\ra\hbar,q,\hbar)
=\delta_{q,s}\truc{\F}(\L_\om (m+s/2)\hbar,q,\hbar)
\\
&&
\label{elm5}
\la e_{m+s},A(\hbar)e_m\ra =\truc{\F}(\la\om,(m\hbar+s\hbar/2)\ra,s,\hbar)
=\truc{\F}(\L_\om(m\hbar+s\hbar/2),s,\hbar)
\eea
Equivalently:
\be
\la e_m,A(\hbar) e_n\ra=\truc{\F}(\la \om,(m+n)\ra\hbar/2,m-n,\hbar)
\ee
\item $A(\hbar)$ is an operator of order $-\infty$, namely there exists $C(k,s)>0$ such that 
\be
\|A(\hbar)u\|_{H^k(\T^l)}\leq C(k,s)\|u\|_{H^s(\T^l)}, \quad (k,s)\in\R,\;  k\geq s
\ee
\end{enumerate}
\end{corollary}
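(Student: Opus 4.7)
My plan is to treat the three items in order, the main inputs being the Calderon--Vaillancourt-type Proposition just established and the exponential weight built into the norms $\|\cdot\|^\dagger_{\rho,k}$ and $\|\cdot\|_{\rho,k}$.

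For item~(1), the Proposition yields the intermediate bound $\|A(\hbar)\|_{L^2\to L^2}\le \sum_{q\in\Z^l}\sup_{\xi\in\R^l}|\truc\A(\xi,q,\hbar)|$. Applying Fourier inversion $\truc\A(\xi,q,\hbar)=(2\pi)^{-l/2}\int_{\R^l}\widehat\A(p,q,\hbar)e^{i\la p,\xi\ra}\,dp$ and taking absolute values under the integral gives $\sup_\xi|\truc\A(\xi,q,\hbar)|\le (2\pi)^{-l/2}\int|\widehat\A(p,q,\hbar)|\,dp$. Summation in $q$, reassembly into a single Lebesgue--Stieltjes integral against $d\lambda(p,s)$, and insertion of the trivial factor $e^{\rho(|p|+|s|)}\ge 1$ then yield $\sum_q \sup_\xi|\truc\A(\xi,q,\hbar)|\le (2\pi)^{-l/2}\|\A\|^\dagger_{\rho,0}\le (2\pi)^{-l/2}\|\A\|^\dagger_{\rho,k}$, with uniformity in $\hbar\in[0,1]$ built into the $\max_\hbar$ in~\eqref{sigmak}. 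The variant~\eqref{stimg} for $\A=\F\circ\L_\omega\in\mathcal O_\omega$ is obtained identically, with the one-dimensional Fourier representation in $p$ and the norm~\eqref{sigom}.

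Item~(2) is a direct reading of Lemma~\ref{azione}. From $A(q,\hbar)e_m(x)=e^{i\la m+q,x\ra}\truc\A(\hbar(m+q/2),q,\hbar)$ and the orthonormality $\la e_a,e_b\ra=\delta_{a,b}$ one reads off~\eqref{elm44}; summing over $q$ via $A(\hbar)=\sum_q A(q,\hbar)$ yields~\eqref{elm55}, whose integral form follows from Definition~\ref{deffour}. The symmetric reformulation $\la e_m,A(\hbar)e_n\ra=\truc\F(\la\om,m+n\ra\hbar/2,m-n,\hbar)$ is the relabeling $s=m-n$, and the specializations to $\F\circ\L_\omega$ use $\truc\A(\xi,q,\hbar)=\truc\F(\la\om,\xi\ra,q,\hbar)$.

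For item~(3), the key input is a sharpening of~(1): since the factor $e^{\rho|q|}$ sits inside the norm, one actually has $\sup_\xi|\truc\A(\xi,q,\hbar)|\le (2\pi)^{-l/2}e^{-\rho|q|}\|\A\|^\dagger_{\rho,0}$, and combined with~(2) this gives the off-diagonal exponential decay $|\la e_n,A(\hbar)e_m\ra|\le C e^{-\rho|n-m|}$. Continuity $H^s\to H^k$ for $k\ge s$ is equivalent to $\ell^2$-boundedness of the conjugated matrix $B_{n,m}:=(1+|n|^2)^{k/2}\la e_n,A(\hbar)e_m\ra(1+|m|^2)^{-s/2}$, which I would verify by Schur's test, using the elementary inequality $(1+|n|^2)\le 2(1+|m|^2)(1+|n-m|^2)$ and splitting the sums according to whether $|n-m|\le|m|/2$ (where $|n|\asymp|m|$ so the polynomial weights balance) or $|n-m|>|m|/2$ (where the exponential dominates any algebraic growth of either weight). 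The main and only non-routine point is this weight balance: the exponential $e^{-\rho|n-m|}$ with $\rho>0$ absorbs the polynomial factor $(1+|n-m|^2)^{k/2}$ for every finite $k$, which is precisely what justifies calling $A(\hbar)$ an operator of order $-\infty$.
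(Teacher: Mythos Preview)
Your arguments for (1) and (2) are correct and essentially identical to the paper's (which simply invokes Proposition~3.4/\eqref{maggC} for (1) and Lemma~\ref{azione} for (2)).

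For (3), your Schur-test strategy is reasonable, but the step ``in the region $|n-m|\le|m|/2$ the polynomial weights balance'' fails when $k>s$: there $|n|\asymp|m|$, so $(1+|n|^2)^{k/2}(1+|m|^2)^{-s/2}\asymp(1+|m|^2)^{(k-s)/2}$, which is unbounded in $m$; the factor $e^{-\rho|n-m|}$ controls only the off-diagonal distance, not $|m|$ itself. This gap is not fixable, because the assertion $H^s\to H^k$ for $k>s$ is actually false for this symbol class: multiplication by a real-analytic $g(x)$ has symbol in $\J^\dagger(\rho)$ (via $\widehat{\A}(p,q)=(2\pi)^{l/2}\widehat{g}_q\,\delta(p)$) yet is of order~$0$, not~$-\infty$. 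The paper's own proof of (3) has the same defect---it writes $(1+|q|^2)^k$ where $(1+|m+q|^2)^k$ is needed, and inserts the factor $(1+|m|^2)^s$ without justification---so you are not missing a hidden idea. Your Schur argument \emph{does} correctly establish boundedness $H^s\to H^s$ for every $s\in\R$ (order~$0$), and that is all that is actually used downstream (namely the invariance of $H^1(\T^l)$ under $e^{i\ep W/\hbar}$ in the Egorov theorem).
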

\begin{proof}
(1) Formulae \eqref{stimz} and \eqref{stimg} are straighforward consequences  of 
Formula (\ref{maggC}).
\vskip 5pt\noindent
(2) \eqref{azioneAop} and \eqref{azioneAq} immediately yield \eqref{elm44} and \eqref{elm55}. Moreover,   (\ref{elm4}) immediately yields (\ref{elm5}).  In turn,  (\ref{elm4}) follows at once by \eqref{azioneAq}. 
\vskip 5pt\noindent
(3)  The condition $\A\in\J(\rho)$ entails:
\begin{eqnarray}
\label{stimaexp}
\sup_{(\xi;\hbar)\in\R^l\times [0,1]}|\A(\xi;q,\hbar)|e^{\rho |q|}\leq e^{\rho |q|}\max_{\hbar\in [0,1]}\|\widehat{\A}(p;q,\hbar)\|_1\to 0, \;|q|\to \infty.
\end{eqnarray}
 Therefore:
\begin{eqnarray*}
\|A(\hbar)u\|^2_{H^k}&\leq&  
\sum_{(q,m)\in\Z^l\times\Z^l}(1+|q|^2)^k\truc{\A}((m+q/2)\hbar,q,\hbar)
|^2\cdot |\widehat{u}_m|^2 
\\
&\leq&
\sum_{q\in\Z^l}\,\sup_{q,m}(1+|q|^2)^k|\truc{\A}((m+q/2)\hbar,q,\hbar)
|^2\sum_{m\in\Z^l}\,(1+|m|^2)^{s}|\widehat{u}_m|^2 \\
&=& C(k,s)\|u\|^2_{H^s}
\\
C(k,s)&:=&\sum_{q\in\Z^l}\,\sup_{q,m}(1+|q|^2)^k|\truc{\A}((m+q/2)\hbar,q,\hbar)|^2
\end{eqnarray*}
where $0<C(k,s)<+\infty$ by (\ref{stimaexp}) above. The Corollary is proved.
 \end{proof}
\subsection{Compositions, Moyal brackets}
 We first list the main pro\-perties which are straightforward consequences of the definition, as in the case of the standard Weyl quantization in $\R^{2l}$. First  introduce the abbreviations
\bea
\label{tt}
&&
t:=(p,s); \quad t^\prime=(p^\prime,s^\prime);\quad  \om t:=(p\om,s)
\\
\label{omt}
&&
\Omega_\om(t^\prime -t,t^\prime):=\la(p^\prime-p)\om,s^\prime\ra- \la (s^\prime-s),p^\prime\omega \ra
=\la p^\prime\om ,s\ra-\la s^\prime,p\om \ra.
\eea
 Given $\F(\hbar), \G(\hbar)\in \J_k(\rho)$, define their twisted convolutions:
\vskip 3pt\noindent
\bea
&&
\label{twc} 
(\widehat{\F}(\hbar){\widetilde{\ast}} \widehat{\G}(\hbar))(p,q;\hbar):=
\int_{\R\times\R^l}\widehat{\F}(t^\prime -t;\hbar)
\widehat{\G}(t^\prime;\hbar)e^{ i[\hbar \Om_\om(t^\prime -t,t^\prime)/2]}\,d\lambda(t^\prime)
\\
\nonumber
&&
{}
\\
&&
 \label{flat3}
 (\F\sharp\G)(x,\xi,\hbar):=
\int_{\R\times\R^l}
(\widehat{\F}(\hbar){\widetilde{\ast}} \widehat{\G}(\hbar))(t,\hbar)e^{i\la s,x\ra+p\L_\om(\xi)}\,d\lambda(t)
\\
\nonumber
{}
\\
\label{MBB1}
&&
\widehat{\mathcal C}(p,q;\hbar):=
\frac{1}{\hbar}\int_{\R\times\R^l}\widehat{\F}(t^\prime -t,\hbar)
\widehat{\G}(t^\prime,\hbar)\sin[\hbar \Omega_\om(t^\prime -t,t^\prime)/2]\,d\lambda(t^\prime)
\\
\nonumber
{}
\\
&&
\label{IFMM1}
{\mathcal C}(x,\xi;\hbar):=\int_{\R\times\R^l}
\widehat{\mathcal C}(p,s;\hbar)e^{ip\L_\om(\xi) +i\la s,x\ra}\,d\lambda(t)
\eea
\vskip 5pt\noindent
Once more by the same argument valid for the Weyl quantization in $\R^{2l}$:
\begin{proposition}
\label{Quant2}
The following composition formulas hold:
\vskip 5pt\noindent
 \bea
\label{Comm6}
&&
F(\hbar)G(\hbar)= \int_{\R\times\R^l}(\widehat{\F}(\hbar){\widetilde{\ast}} \widehat{\G}(\hbar))(t;\hbar)
U_\hbar(\om t)\,d\lambda(t). 
\eea
\vskip 3pt\noindent
\bea
\label{Comm7}
&&
\frac{[F(\hbar),G(\hbar)]}{i\hbar}= \int_{\R\times\R^l}\widehat{\mathcal C}(t;\hbar)U_\hbar(\om t)\,d\lambda(t)
\eea
\vskip 5pt\noindent
\end{proposition}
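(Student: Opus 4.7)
The plan is to derive the product formula \eqref{Comm6} by direct computation from the defining formula \eqref{Wop} of Weyl quantization, and then to obtain the commutator formula \eqref{Comm7} as an immediate corollary.

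The starting point is
\begin{equation*}
F(\hbar)\,G(\hbar)=\int_{\R\times\R^l}\!\int_{\R\times\R^l}\widehat{\F}(\sigma;\hbar)\,\widehat{\G}(\sigma';\hbar)\,U_\hbar(\om\sigma)\,U_\hbar(\om\sigma')\,d\lambda(\sigma)\,d\lambda(\sigma').
\end{equation*}
The double integral is absolutely convergent: the hypothesis $\F,\G\in\J_k(\rho)$ and the weighted $L^1$ character of the norm \eqref{sigom} guarantee that $\widehat{\F}(\cdot;\hbar)$ and $\widehat{\G}(\cdot;\hbar)$ are integrable with respect to $d\lambda$, so Fubini's theorem applies and $\|U_\hbar(\om t)\|=1$ uniformly in $t$. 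I would then invoke the Weyl commutation relation \eqref{Weyl} to combine the two Heisenberg displacements,
\begin{equation*}
U_\hbar(\om\sigma)\,U_\hbar(\om\sigma')=e^{i\hbar\,\Omega(\om\sigma,\om\sigma')/2}\,U_\hbar(\om(\sigma+\sigma')),
\end{equation*}
and change variables to $t:=\sigma+\sigma'$, $t':=\sigma'$, so that $\sigma=t-t'$ with unit Jacobian. A direct bilinearity computation from \eqref{2forma} shows that $\Omega(\om(t-t'),\om t')$ coincides, up to sign, with the quantity $\Omega_\om(t'-t,t')$ defined in \eqref{omt}. Collecting the $t'$-integrand recognizes exactly the twisted convolution \eqref{twc}, and pulling the factor $U_\hbar(\om t)$ and the integral in $t$ outside yields \eqref{Comm6}.

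The commutator formula \eqref{Comm7} then follows at once by applying \eqref{Comm6} twice. The two representations of $FG$ and $GF$ differ only by the sign of the phase $\hbar\Omega_\om/2$ inside the twisted convolution, because $\Omega$ is antisymmetric. Hence
\begin{equation*}
\frac{FG-GF}{i\hbar}=\int_{\R\times\R^l}U_\hbar(\om t)\Bigl[\tfrac{1}{i\hbar}\!\int \widehat{\F}(t'-t)\widehat{\G}(t')\bigl(e^{i\hbar\Omega_\om(t'-t,t')/2}-e^{-i\hbar\Omega_\om(t'-t,t')/2}\bigr)\,d\lambda(t')\Bigr]d\lambda(t),
\end{equation*}
and the identity $e^{ix}-e^{-ix}=2i\sin x$ identifies the bracketed expression with $\widehat{\mathcal C}(t;\hbar)$ from \eqref{MBB1}, proving \eqref{Comm7}.

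The main technical obstacle I anticipate is the bookkeeping of signs and the factor $1/2$ in $\Omega_\om$: one has to verify that, under the chosen change of variables $(\sigma,\sigma')\mapsto(t,t')$, the phase produced by \eqref{Weyl} matches the paper's definition \eqref{twc} of the twisted convolution on the nose, taking into account that the integration variable in \eqref{twc} is $t'$ while the argument of $\widehat{\F}$ is $t'-t$. Once this is aligned, the remaining ingredients — absolute convergence of the double integrals, the application of Fubini, and the passage from the difference of exponentials to the sine kernel of \eqref{MBB1} — are routine in view of the weighted $L^1$ norms introduced in Section \ref{not}.
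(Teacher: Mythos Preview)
Your approach is exactly what the paper invokes: the paper gives no proof beyond the remark ``Once more by the same argument valid for the Weyl quantization in $\R^{2l}$,'' and your computation is precisely that standard argument---expand each operator via \eqref{quant2}, apply the Weyl relation \eqref{Weyl}, change variables, and recognize the twisted convolution \eqref{twc}. Your caution about sign and factor-of-$1/2$ bookkeeping is well placed (note that \eqref{Weyl} as printed is missing the $1/2$, and the arguments in \eqref{twc} use $t'-t$ rather than the $t-t'$ one first obtains), but these are cosmetic adjustments; the method is identical.
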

\begin{remark} The symbol of  the product $F(\hbar)G(\hbar)$  is  then $(\F\sharp\G)(\L_\om(\xi),x,\hbar)$
 and 
 the symbol of the commutator $[F(\hbar),G(\hbar)]/i\hbar$ is $ {\mathcal C}(\L_\om(\xi),x;\hbar)$, which is by definition the Moyal bra\-cket  of the symbols  $\F, \G$.  From (\ref{MBB1}) we get the asymptotic expansion:
 \vskip 3pt\noindent
 \bea
 \label{Mo6}
 &&
 \widehat{\mathcal C}(p,q;\om;\hbar)=\sum_{j=0}^\infty\frac{(-1)^j \hbar^{2j}}{(2j+1)!} D^j(p,q;\om)
 \\
 &&
 D^j(p,q;\om):=\int_{\R\times\R^l}\widehat{\F}(t^\prime-t,\hbar)
\widehat{\G}(t^\prime,\hbar)[\Omega_\om(t^\prime -t,t^\prime)^j\,d\lambda(t^\prime)
 \eea
  \vskip 3pt\noindent
 whence the asymptotic expansion for the Moyal bracket
  \vskip 3pt\noindent
 \bea
 \label{Moexp}
 &&
 \{\F, \G\}_M(\L_\om(\xi),x;\hbar)=\{\F, \G\}(\L_\om(\xi),x,\hbar)+ 
 \\
 \nonumber
 &&
 \sum_{|r+j|=0}^\infty\frac{(-1)^{|r|}\hbar^{|r+j|}}{r!sj}[\partial_x^r \om\partial^j_\L \F(\L_\om(\xi),x)]\cdot [ \om\partial^j_\L \partial_x^r G(\L_\om(\xi),x,\hbar)]-
\\
\nonumber
&&
-\sum_{|r+j|=0}^\infty\frac{(-1)^{|r|}\hbar^{|r+j|}}{r!j!}[\partial_x^r \om\partial^j_\L \G(\L_\om(\xi),x)]\cdot [ \om\partial^j_\L \partial_x^r F(\L_\om(\xi),x,\hbar)]
\eea
 Remark that:
\be
\label{Mo5}
\{\F, \G\}_M(\L_\om(\xi),x;\hbar)=\{\F, \G\}(\L_\om(\xi),x)+O(\hbar)
\ee
In particular, since $L_\om(\xi)$ is linear, we have $\forall\,\F(\xi;x;\hbar)\in C^\infty(\R^l\times\T^l\times[0,1])$:
\be
\label{MP}
\{\F, \L_\om(\xi)\}_M(\L_\om(\xi),x;\hbar)=\{\F, \L_\om(\xi)\}(\L_\om(\xi),x;\hbar)
\ee
\end{remark}
The observables  $\F(\xi,x;\hbar)\in\J(\rho)$ enjoy  the crucial property of stability under compositions of their dependence on $\L_\om(\xi)$ (formulae (\ref{flat3}) and (\ref{IFMM1}) above). 
As in \cite{BGP}, we want to estimate  the relevant quantum observables uniformly with respect to $\hbar$, i.e. 
through the weighted norm (\ref{sigom}).  
\vskip 3pt\noindent
%%%%%%%%%%%%%%%%%
%%%%%%%%%%%%%%%%%
\subsection{Uniform estimates}
The following proposition is the heart of the estimates needed for the
convergence of the KAM iteration. The proof will be given in the next
(sub)section. Even though we could limit ourselves to symbols in $\J(\rho)$, we consider for the sake of generality and further reference also the general case of symbols belonging to $\J^\dagger(\rho)$. 
\begin{proposition} 
\label{stimeMo}
Let $F$, 
$G\in J^\dagger_k(\rho)$, $k=0,1,\ldots$, $d=d_1+d_2$. Let $\F, \G$ be the corresponding symbols, and $0<d+d_1<\rho$.  Then:
 \begin{enumerate}
 \item[\bf{($1^\dagger$)}] 
 $FG\in J^\dagger_k(\rho)$ and fulfills the estimate
  \vskip 3pt\noindent
 \be
\label{2conv'}
\|FG\|_{\mathcal B(L^2)}\leq \|\F\sharp\G\|^\dagger_{\rho,k}
\leq (k+1)4^k \|\F\|^\dagger_{\rho,k} \cdot \|\G\|^\dagger_{\rho,k}
\ee
 \vskip 3pt\noindent
 \item[\bf{($2^\dagger$)}] 
  $\ds \frac{[F,G]}{i\hbar}\in J^\dagger_k(\rho-d)$ and fulfills the estimate
  \vskip 3pt\noindent
 \bea
 \label{normaM2'}
\left\Vert\frac{[F,G]}{i\hbar}\right\Vert_{\mathcal B(L^2)}\leq 
\|\{\F,\G\}_M\|_{\rho-d-d_1,k}^\dagger \leq \frac{(k+1)4^k}{e^2d_1(d+d_1)}\|\F\|_{\rho,k}^\dagger \|\G\|_{\rho-d,k}^\dagger
\eea
 \vskip 3pt\noindent
 \item[\bf{($3^\dagger$)}]
 $\F\G \in \J^\dagger_k(\rho)$, and
 \be
 \label{simple'}
 \|\F\G\|^\dagger_{\rho,k}
\leq (k+1)4^k  \|\F\|^\dagger_{\rho,k} \cdot \|\G\|^\dagger_{\rho,k}
\ee
 \end{enumerate}
 Moreover if $F$, 
$G\in J_k(\rho)$, $k=0,1,\ldots$, and $\F, \G\in\J_k(\rho)$,   then:
 \begin{enumerate}
 \item[\bf{(1)}] 
  $FG\in J_k(\rho)$ and fulfills the estimate
  \vskip 3pt\noindent
 \be
\label{2conv}
\|FG\|_{\mathcal B(L^2)}\leq \|\F\sharp\G\|_{\rho,k}
\leq (k+1)4^k \|\F\|_{\rho,k} \cdot \|\G\|_{\rho,k}
\ee
 \vskip 3pt\noindent
 \item[\bf{(2)}] 
 $\ds \frac{[F,G]}{i\hbar}\in J_k(\rho-d)$ and fulfills the estimate
  \vskip 5pt\noindent
 \be
 \label{normaM2}
\left\Vert\frac{[F,G]}{i\hbar}\right\Vert_{\mathcal B(L^2)}\leq \|\{\F,\G\}_M\|_{\rho-d-d_1,k}
\leq \frac{(k+1)4^k}{e^2d_1(d+d_1)}\|\F\|_{\rho,k}\cdot \|\G \|_{\rho-d,k}
\ee
 \item[\bf{(3)}]
 $\F \G \in \J_k(\rho)$ and
\be
\label{simple}
 \|\F\G\|_{\rho,k}
\leq (k+1)4^k  \|\F\|_{\rho,k} \cdot \|\G\|_{\rho,k}.
\ee
\end{enumerate}
 \vskip 3pt\noindent
\end{proposition}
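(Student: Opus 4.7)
The operator-norm bounds on the left of \eqref{2conv'}, \eqref{normaM2'}, \eqref{2conv}, and \eqref{normaM2} reduce immediately to their symbol-level right-hand sides by the Calderon--Vaillancourt type estimates \eqref{stimz}--\eqref{stimg}. Since $\|\cdot\|^\dagger_{\rho,k}$ and $\|\cdot\|_{\rho,k}$ share the same weighted $L^1$-on-$(p,s)$ structure, the statements $(j)$ and $(j^\dagger)$ are proved by identical arguments; I describe only the non-dagger case.

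The mechanism is common to all three parts. For the ordinary pointwise product (Part (3)) one has $\widehat{\F\G}=\widehat\F\ast\widehat\G$, i.e.\ no phase factor. Applying $\partial^\gamma_\hbar$ by the Leibniz rule and splitting the weights via
\[
\mu_k(t)\le 2^{k/2}\mu_k(t-t')\mu_k(t'),\qquad e^{\rho(|s|+|p|)}\le e^{\rho(|s-s'|+|p-p'|)}e^{\rho(|s'|+|p'|)},
\]
factorizes the convolution integral as a product of two norms of $\F$ and $\G$. Summing $\binom{\gamma}{j_1}$ and the $2^{(k-\gamma)/2}$ prefactors over $0\le\gamma\le k$ and $j_1+j_2=\gamma$ yields the constant $(k+1)4^k$. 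For $\F\sharp\G$ (Part (1)), Leibniz in $\hbar$ generates an extra index $j_3$ accounting for derivatives of the phase $e^{i\hbar\Omega_\omega(t'-t,t')/2}$, which brings down $(\Omega_\omega(t'-t,t')/2)^{j_3}$. Using $|\omega|\le 1$ one has $|\Omega_\omega(t'-t,t')|\le|p'||s|+|p||s'|\le|t|\,|t'|$, whence $|\Omega_\omega|^{j_3}\le\mu_{j_3}(t)\mu_{j_3}(t')$; the constraint $j_1+j_2+j_3=\gamma\le k$ gives $k-j_1\ge k-\gamma+j_3$ and $k-j_2\ge k-\gamma+j_3$, so this polynomial is absorbed into the $\mu$-weights of $\widehat\F$ and $\widehat\G$ with no exponential-weight loss, again contributing $(k+1)4^k$.

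For the Moyal bracket (Part (2)), the kernel $\sin(\hbar\Omega_\omega/2)/\hbar$ obeys $|\sin(\hbar\Omega_\omega/2)/\hbar|\le|\Omega_\omega|/2$, so in comparison with Part (1) one picks up one extra factor $|\Omega_\omega(t'-t,t')|\le|p||s'|+|p'||s|$ that no longer fits inside the $\mu$-budget. I absorb it instead by the elementary trade $x\le(e\delta)^{-1}e^{\delta x}$ (the $j=1$ case of \eqref{diff}): shrinking $e^{\rho|p|}$ to $e^{(\rho-d)|p|}$ absorbs one power of $|p|$ at cost $(ed)^{-1}$, and a further shrinkage $\rho-d\mapsto\rho-d-d_1$ absorbs one power of $|s|$ at cost $(ed_1)^{-1}$. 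This is exactly the two-parameter loss $d=d_1+d_2$ in the statement and yields the prefactor $(e^2 d_1(d+d_1))^{-1}$ in \eqref{normaM2'} and \eqref{normaM2}; the higher Taylor terms of $\sin$, carrying $(\hbar\Omega_\omega)^{2j}/(2j+1)!$, are absorbed by the same $j_3$-bookkeeping as in Part (1).

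The one genuinely delicate point is the combinatorial check in Part (1): verifying that the polynomial $|\Omega_\omega|^{j_3}$ produced by $\partial^{j_3}_\hbar$ of the phase fits exactly inside the regularity budget $k-\gamma$, with no exponential-weight loss, so that the norm structure $\|\cdot\|_{\rho,k}$ is genuinely preserved under $\sharp$. This is precisely the reason the $\hbar$-derivative weighting in \eqref{sigmak}--\eqref{sigom} decreases from $\mu_k$ to $\mu_{k-\gamma}$ as $\gamma$ grows: the $\mu_{j_3}$ budget needed for phase-differentiation is matched exactly by $\mu_{k-j_1}$ on $\widehat\F$ and $\mu_{k-j_2}$ on $\widehat\G$ through the identity $j_1+j_2+j_3=\gamma$. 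Once this is in hand, Part (2) follows from Part (1) by the single additional exponential-to-polynomial trade described above.
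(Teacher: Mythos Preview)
Your approach is essentially the paper's own: the paper isolates three lemmata---(i) $|\Omega_\omega(t'-t,t')|^j\le 2^j\mu_j(t'-t)\mu_j(t')$, (ii) $|\partial_\hbar^m(\sin(\hbar x/2)/\hbar)|\le(|x|/2)^{m+1}$, and (iii) the $k=0$ Moyal estimate---and then runs exactly your Leibniz/$\mu$-splitting machinery, absorbing $|\Omega|^j$ into the $\mu$-budget for $\sharp$ and trading the one surplus $|\Omega|$ against exponential weight for $\{\cdot,\cdot\}_M$.

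Two points of imprecision are worth flagging. In Part~(1) you bound $|\Omega_\omega(t'-t,t')|$ via its second algebraic form by $|t|\,|t'|$, but $\widehat\F$ sits at $t'-t$, not at $t$; the subsequent bookkeeping you describe ($k-j_1\ge k-\gamma+j_3$, $k-j_2\ge k-\gamma+j_3$) actually matches the bound $|\Omega_\omega(t'-t,t')|\le 2|t'-t|\,|t'|$ coming from the \emph{first} form, which is what the paper uses. With your stated bound one would end up needing $\mu_{k-\gamma+2j_3}(t')\le\mu_{k-j_2}(t')$, i.e.\ $j_3\le j_1$, which is not always true. In Part~(2) the extra $|\Omega|$ should likewise be treated as the product $|t'-t|\cdot|t'|$, not as separate scalars $|p|$ and $|s|$: after the convolution factorizes one absorbs $|t'-t|$ into $\|\F\|_\rho$ at cost $(e(d+d_1))^{-1}$ and $|t'|$ into $\|\G\|_{\rho-d}$ at cost $(ed_1)^{-1}$, which is exactly the asymmetric loss $\|\F\|_{\rho}\|\G\|_{\rho-d}$ appearing in the statement. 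With these two fixes your sketch coincides with the paper's proof.
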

\begin{remark}
The operators $F(\hbar)$ with the uniform norm $\|F\|_{\rho,k}, k=0,1,\ldots$ form a Banach subalgebra (without unit) of the algebra of the continuous operators in $L^2(\T^l)$.
\end{remark}
Before turning to the proof we state and prove two further useful results.
\begin{corollary}
\label{multipleM}
Let $\F,\G\in\J_k(\rho)$, and 
let $0<d<\rho$,  $r\in {\Bbb N}$.  Then:
\vskip 4pt\noindent
 \bea
 \label{stimaMr}
\frac{1}{r!}\|\underbrace{\{\F,\{\F,\ldots,\{\F}_{r\ times},{\mathcal G}\}_M\}_M\ldots\}_M\|_{\rho-d,k}
\leq \left(\frac{(k+1)4^k}{ed^2}\right)^r\|\F\|_{\rho,k}^r
\|{\mathcal G}\|_{\rho,k}
\eea
\end{corollary}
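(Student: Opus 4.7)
The plan is to argue by induction on $r$, iteratively applying the Moyal-bracket estimate \eqref{normaM2} from Proposition \ref{stimeMo}(2), with a uniform shrinkage $d/r$ of the analyticity parameter at each step. Set $\G_0:=\G$ and $\G_j:=\{\F,\G_{j-1}\}_M$ for $j=1,\ldots,r$, so the quantity to be bounded is $\|\G_r\|_{\rho-d,k}/r!$. Introduce the decreasing sequence $\rho_j:=\rho-jd/r$, so that $\rho_0=\rho$, $\rho_r=\rho-d$, and $\rho_{j-1}-\rho_j=d/r$.

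At step $j$, I would apply \eqref{normaM2} to the pair $(\F,\G_{j-1})$, keeping $\F$ at its full analyticity level $\rho$ while locating $\G_{j-1}$ at $\rho_{j-1}$; concretely, the two parameters that appear as $d$ and $d_1$ in \eqref{normaM2} are chosen here as $(j-1)d/r$ and $d/r$ respectively, which is legitimate since their sum $jd/r$ stays below $d<\rho$. This yields
\[
\|\G_j\|_{\rho_j,k}\;\leq\;\frac{(k+1)4^k\,r^2}{e^2\,j\,d^2}\;\|\F\|_{\rho,k}\;\|\G_{j-1}\|_{\rho_{j-1},k}.
\]
The decisive feature is the factor $1/j$ on the right-hand side, which arises precisely because the parameter playing the role of $d$ in the estimate grows linearly in $j$. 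Iterating from $j=1$ up to $j=r$ and using the telescoping identity $\prod_{j=1}^{r}j=r!$ produces
\[
\|\G_r\|_{\rho-d,k}\;\leq\;\frac{\bigl((k+1)4^k\bigr)^r\,r^{2r}}{(e^2 d^2)^r\,r!}\;\|\F\|_{\rho,k}^r\,\|\G\|_{\rho,k}.
\]
Dividing by $r!$ and invoking Stirling's inequality in the form $(r!)^2\geq r^{2r}/e^{2r}$ collapses $r^{2r}/(r!)^2$ into a bounded factor, yielding a bound of the announced shape $\bigl((k+1)4^k/(ed^2)\bigr)^r\|\F\|_{\rho,k}^r\|\G\|_{\rho,k}$.

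The main obstacle is the combinatorial bookkeeping in the choice of shrinkage. A naive scheme in which both $\F$ and $\G_{j-1}$ are placed at the same shrinking level (so that the $d$-parameter of \eqref{normaM2} is taken to vanish and only $d_1=d/r$ is used) produces $(r/d)^{2r}$ without the compensating $r!$ in the denominator, giving only a bound of order $(Cr/d^2)^r$ after dividing by $r!$, which is too weak. Keeping $\F$ at full analyticity $\rho$ throughout, so that the $d$-parameter in \eqref{normaM2} accumulates linearly in $j$, is what produces the telescoping $r!$ essential to absorb the factorial prefactor and recover the claimed estimate.
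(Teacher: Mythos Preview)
Your argument is essentially the same as the paper's: both keep $\F$ at the full radius $\rho$ throughout, shrink the radius for $\G$ by $d/r$ at each of the $r$ iterations of \eqref{normaM2}, obtain the product $\frac{C_k^r r^{2r}}{(ed)^{2r}\,r!}$, and then divide by $r!$ and apply Stirling. One small slip in wording: $r^{2r}/(r!)^2$ is not a ``bounded factor'' but is $\leq e^{2r}/(2\pi r)$, which is exactly what cancels the $e^{-2r}$ coming from $(ed)^{2r}$ to give the stated shape---this is also how the paper concludes.
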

\begin{proof} 
We follow the argument of  \cite{BGP},  Lemma 3.5.  If $d+d_1=d_2$, ({3.42}) entails:
\vskip 6pt\noindent
$$
\|\{\F,\G\}_M\|_{\rho-d_2,k}\leq \frac{C_k}{e^2d_2d_1}\|\F\|_{\rho,k}\cdot\|\G\|_{\rho-d,k},\quad C_k:=(k+1)4^k.
$$
\vskip 6pt\noindent
 Set now $\ds d=\frac{r-1}{r}d_2$ which yields $\ds d_1=\frac{d_2}{r}$. Then:
 \vskip 4pt\noindent
\begin{eqnarray}
\label{1}
&&
\|\{\F,\G\}_M\|_{\rho-d_2,k}\leq \frac{C_k}{e^2d_2\frac{d_2}{r}}\|\F\|_{\rho,k}\cdot\|\G\|_{\rho-\frac{r-1}{r}d_2,k}=\frac{C_kr}{(ed_2)^2} \|\F\|_{\rho,k}\cdot\|\G\|_{\rho-\frac{r-1}{r}d_2,k}.
\end{eqnarray}
\vskip 4pt\noindent
Therefore:
\vskip 4pt\noindent
\begin{eqnarray*}
\|\{\F,\{\F,\G\}_M\}_M\|_{\rho-d_2,k}\leq \frac{C_kr}{(ed_2)^2} \|\F\|_{\rho,k}\cdot\|\{\F,\G\}_M\|_{\rho-\frac{r-1}{r}d_2,k}.
\end{eqnarray*}
\vskip 4pt\noindent
To estimate $\|\{\F,\G\}_M\|_{\rho-\frac{r-1}{r}d_2,k}$ we repeat the argument yielding \eqref{1} with $\ds \frac{r-1}{r}d_2$ in place of $d_2$.  We get:
$$
 \frac{r-1}{r}d_2= \frac{r-2}{r}d_2+ \frac{1}{r}d_2
 $$
 and therefore
 \vskip 4pt\noindent
$$
\|\{\F,\G\}_M\|_{\rho-\frac{r-1}{r}d_2,k}\leq \frac{C_k}{ed_2(\frac{r-1}{r})\frac{d_2}{r}}\|\F\|_{\rho,k}\cdot\|\G\|_{\rho 
-\frac{r-2}{r}d_2,k}
$$
$$
\leq  \frac{C_kr}{(ed_2)^2}\left(\frac{r}{r-1}\right) \|\F\|_{\rho,k}\cdot \Vert \G\Vert_{\rho-\frac{r-2}{r}d_2,k}
$$
\vskip 6pt\noindent
whence
\begin{eqnarray*}
&&
\|\{\F,\{\F,\G\}_M\}_M\|_{\rho-d_2,k}\leq \frac{(C_kr)^2}{(ed_2)^4}\left(\frac{r}{r-1}\right) \|\F\|_{\rho,k}^2\cdot \Vert \G\Vert_{\rho-\frac{r-2}{r}d_2,k}.
\end{eqnarray*}
Iterating $r$ times we get:
\vskip 6pt\noindent
\be
\label{2}
\frac1{r!}\|\underbrace{\{\F,\{\F,\cdots,\{\F}_{r\ times},\G\}_M\}_M,\cdots\}_M\|_{\rho-d_2,k}\leq \frac{(C_kr)^{r}r^{2r+1}}
{(ed_2)^{2r}r!^2} \|\F\|^r_{\rho,k}\cdot\|\G\|_{\rho,k}.
\ee
\vskip 6pt\noindent
By the Stirling formula:
\vskip 6pt\noindent
\begin{eqnarray*}
\frac{r^{2r+1}}
{(ed_2)^{2r}r!^2}\leq \frac1{(ed_2^2)^{r}}
%\frac{r}{2\pi\sqrt{r(r-1)}} 
\frac 1{\sqrt{2\pi }} \frac1{(d_2^2)^{r}} \leq \frac1{(d_2^2)^{r}}.
\end{eqnarray*}
\vskip 6pt\noindent
Since $C_k=(k+1)4^k$, \eqref{2} yields \eqref{stimaMr} up to the abuse of notation $d_2=d$.
\end{proof}
\begin{corollary}
\label{stimaP}
Let $\F(\xi;x;\hbar)\in\J_k(\rho)$, $\rho>0$, $k=0,1,\ldots$. Then $\{\F,\L_\om\}_M\in\J_k(\rho-d)$ $\forall\,0<d<\rho$ and the following estimates hold:
\be
\label{stimapp}
\|[F,L_\om]/i\hbar\|_{\rho-d,k}= \|\{\F,\L_\om\}_M\|_{\rho-d,k}
\leq \frac{1}{ed}\|\F\|_{\rho,k}
\ee
\bea
\label{stimaMpp}
&&
\|\underbrace{[F,[\cdots,[F}_{r\ times},L_\om]\cdots]/(i\hbar)^r\|_{\rho-d,k}= \|\{\F,\cdots,\{\F,\L_\om\}_M\cdots\}_M\|_{\rho-d,k}
\\
&&
\nonumber
\\
&&
\nonumber
\leq \frac1{ed}\left(\frac{(k+1)4^k}{ed^2}\right)^r\|\F\|_{\rho,k}^r
\eea
\end{corollary}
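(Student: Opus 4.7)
The plan is to exploit the fact that $\L_\om$ is linear in $\xi$, which collapses the innermost Moyal bracket to a classical derivative, and then to apply Corollary \ref{multipleM} to the remaining brackets.

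The starting point is the identity (\ref{MP}): $\{\F,\L_\om\}_M=\{\F,\L_\om\}=-\om\cdot\nabla_x\F$. On the Fourier side (see (\ref{IFT})) this corresponds to multiplying $\widehat{\F}(p,s;\hbar)$ by $-i\la\om,s\ra$. For the simple estimate (\ref{stimapp}) I would plug this into the definition (\ref{sigom}) of $\|\cdot\|_{\rho-d,k}$ and use the standing normalization $|\om|\leq 1$ from Remark 2.5 to bound $|\la\om,s\ra|\leq|s|$. The elementary Cauchy-type bound $|s|e^{-d|s|}\leq (ed)^{-1}$ then absorbs the extra polynomial factor at the cost of shrinking the weight from $\rho$ to $\rho-d$. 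Neither the $\mu_{k-\gamma}$ factor nor the $\hbar$-derivatives cause any difficulty, because the multiplier $-i\la\om,s\ra$ is independent of both $p$ and $\hbar$ and is dominated by $\mu_1(p\om,s)$.

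For the iterated estimate (\ref{stimaMpp}) the strategy is to apply (\ref{stimapp}) once, at the innermost position, and then invoke Corollary \ref{multipleM} for the remaining $r-1$ iterations. Setting $\G_1:=\{\F,\L_\om\}_M$, the first step gives $\G_1\in\J_k(\rho-d_1)$ with $\|\G_1\|_{\rho-d_1,k}\leq(ed_1)^{-1}\|\F\|_{\rho,k}$. For $r\geq 2$ the quantity under study is the $(r-1)$-fold iterated Moyal bracket of $\F$ against $\G_1$, and Corollary \ref{multipleM}, applied with initial radius $\rho-d_1$ and further radius loss $d_2$, yields
\[
\Bigl\|\underbrace{\{\F,\ldots,\{\F}_{r-1\ \text{times}},\G_1\}_M\ldots\}_M\Bigr\|_{\rho-d_1-d_2,k}\leq (r-1)!\,\Bigl(\frac{(k+1)4^k}{e d_2^{\,2}}\Bigr)^{r-1}\|\F\|_{\rho,k}^{r-1}\,\|\G_1\|_{\rho-d_1,k}.
\]
Requiring $d_1+d_2=d$ and choosing the splitting $d_1=d/r$, $d_2=d(r-1)/r$, exactly as in the proof of Corollary \ref{multipleM}, lets one control the factor $(r/(r-1))^{2(r-1)}$ by $e^2$ and invoke the Stirling-type collapse of $r^{2r+1}/(e^{2r}r!^{\,2})$ to reconstruct the announced bound $(ed)^{-1}\bigl((k+1)4^k/(ed^2)\bigr)^r\|\F\|_{\rho,k}^r$.

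The only point I expect to be delicate is the combinatorial bookkeeping in this last step: keeping track of how the single Poisson-bracket step interacts with the Moyal iteration of Corollary \ref{multipleM}, and verifying that the $(r-1)!$, the factor $d_1^{-1}$, and the geometric loss $d_2^{-2(r-1)}$ recombine, after the Stirling simplification, into the clean form displayed in (\ref{stimaMpp}). The case $r=1$ is handled directly by (\ref{stimapp}) and needs no iteration.
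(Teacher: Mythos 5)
Your reading of the argument is the intended one, and it coincides with what the paper (tersely) does. For (\ref{stimapp}) the paper uses the same Fourier-side calculation: by (\ref{MP}), $\{\F,\L_\om\}_M=\{\F,\L_\om\}=-\la\om,\nabla_x\ra\F$, which multiplies the $q$-th Fourier coefficient by $i\la\om,q\ra$, and the factor is absorbed by $|\la\om,q\ra|e^{-d|q|}\leq(ed)^{-1}$, using $|\om|\leq 1$. For (\ref{stimaMpp}) the paper's one-line assertion ``a direct consequence of Corollary~\ref{multipleM}'' can only mean your two-step scheme -- collapse the innermost bracket first, then iterate Corollary~\ref{multipleM} on the remaining $r-1$ brackets -- since $\L_\om$ itself is not in $\J_k(\rho)$ and hence cannot be inserted as the ``$\G$'' of Corollary~\ref{multipleM}. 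This is also precisely the manoeuvre the paper performs later in the Egorov theorem, where $\{\W,\dots,\{\W,\L_\om\}_M\dots\}_M$ is first rewritten as $\{\W,\dots,\{-i\la\om,\nabla_x\ra\W\}_M\dots\}_M$ before $\J_k$-estimates are invoked.

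Where you are too optimistic is the claim that the final combinatorics ``recombine into the clean form displayed in (\ref{stimaMpp}).'' Setting $\G_1:=\{\F,\L_\om\}_M$ and applying Corollary~\ref{multipleM}, which carries a $1/(r-1)!$ on the left, followed by $\|\G_1\|_{\rho-d_1,k}\leq(ed_1)^{-1}\|\F\|_{\rho,k}$, produces
\begin{equation*}
\bigl\|\underbrace{\{\F,\dots,\{\F}_{r\text{ times}},\L_\om\}_M\dots\}_M\bigr\|_{\rho-d,k}\;\leq\;(r-1)!\,\Bigl(\frac{(k+1)4^k}{e d_2^{\,2}}\Bigr)^{r-1}\frac{1}{ed_1}\,\|\F\|_{\rho,k}^{\,r},
\end{equation*}
and the $(r-1)!$ does \emph{not} disappear in the Stirling computation you invoke: that cancellation, as in the proof of Corollary~\ref{multipleM}, uses the $r^{2r}$ arising from the radius splitting against an $(r!)^2$, i.e.\ it presupposes a $1/r!$ prefactor on the left. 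Read literally, without any $1/r!$, the displayed bound (\ref{stimaMpp}) is strictly stronger than Corollary~\ref{multipleM} can furnish; this is likely a transcription slip in the statement, consistent with the fact that wherever the paper actually uses the corollary (the commutator series in Section~\ref{sectionegorov}) the iterated brackets always come with the $1/m!$ of an exponential. With that $1/r!$ reinstated your scheme closes, up to a benign factor $\bigl(r/(r-1)\bigr)^{2(r-1)}\leq e^2$; as written, the claim that the constants fall out in the exact form of (\ref{stimaMpp}) is not justified.
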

\begin{proof}
By (\ref{MP}):
$$
\{\F,\L_\om\}_M=\{\F,\L_\om\}=-\la \om,\nabla_x\ra\F(\xi,x;\hbar)=\sum_{q\in\Z^l}\la\om,q\ra e^{i\la q,x\ra}\int_{\R}\widehat{\F}_q(p;\hbar)e^{ip\L_\om(\xi)}\,dp
$$
and therefore:
\begin{eqnarray*}
&&
 \|\{\F,\L_\om\}_M\|_{\rho-d,k}\leq \|\{\F,\L_\om\}\|_{\rho-d,k}\leq \sum_{q\in\Z^l}|\la\om,q\ra|e^{(\rho-d)|q|}\|\F_q\|_{\rho,k} \leq
 \\
 &&
 \sup_{q\in\Z^l}\la\om,q\ra|e^{-d|q|}\sum_{q\in\Z^l}e^{\rho|q|}\|\F_q\|_{\rho,k} \leq \frac{1}{ed}\|\F\|_{\rho,k}
\end{eqnarray*}
because $|\om|\leq 1$ by Remark 2.6. This proves (\ref{stimapp}). (\ref{stimaMpp}) is a direct consequence of Corollary \ref{multipleM}. 
\end{proof}
\subsection{Proof of Proposition \ref{stimeMo}} 
\subsubsection{Three lemmata}
\label{2l}
The proof will use the three following Lemmata.
\begin{lemma}
\label{symp}
Let $p,p'\in\R^{l},\ s,s^\prime\in\R^l$. Define $t:=(p,s), t^\prime:=(p^\prime,s^\prime)$.  Let $\Om_\om(\cdot)$ and $\mu_j(\cdot)$ be defined by (\ref{omt}) and (\ref{muk}), respectively. Then:
\be
\vert\Omega_\om(t,t^\prime)\vert^j\leq 2^j\mu_j(t)\mu_j(t^\prime).
\ee
\end{lemma}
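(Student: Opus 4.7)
The plan is to unfold the definition of $\Omega_\om$, apply the triangle inequality together with the normalization $|\om|\leq 1$ from Remark 2.6, and then bound the resulting cross products of $|p|,|s|,|p'|,|s'|$ by $|t|\cdot|t'|$.

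More concretely, from the explicit formula (\ref{omt}) one reads off
\[
\Omega_\om(t,t') = \la p\om,s'\ra - \la s,p'\om\ra,
\]
so by Cauchy--Schwarz and $|\om|\leq 1$ we get $|\Omega_\om(t,t')|\leq |p|\,|s'|+|s|\,|p'|$. Since $|p|\leq |t|$ and $|s|\leq |t|$ (similarly for $t'$), each of the two summands is bounded by $|t|\,|t'|$, giving
\[
|\Omega_\om(t,t')|\leq 2\,|t|\,|t'|.
\]
Raising to the $j$-th power and using $|t|\leq (1+|t|^2)^{1/2}=\mu_1(t)\leq \mu_j(t)$ (and analogously for $t'$) yields the claim.

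There is essentially no obstacle here: the estimate is purely algebraic, the factor $2^j$ comes from the crude splitting into two summands, and the hypothesis $|\om|\leq 1$ (which is harmless by the rescaling argument of Remark 2.6) is what allows one to drop $\om$ from the bounds. The only small care needed is to keep track of which variables $\om$ multiplies; since $\om$ appears only paired with $p$ or $p'$ in (\ref{omt}), the estimate $|p\om|\leq |p|$ (componentwise, since $|\om|=|\om_1|+\cdots+|\om_l|\leq 1$) is the step that does the work.
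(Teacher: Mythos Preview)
Your proof is correct and follows the same approach as the paper, which dispatches the lemma in one line by noting that $|\Omega_\om(t,t')|\leq 2|t||t'|$ (using $|\om|\leq 1$) and then bounding $|t|^j\leq \mu_j(t)$. Your final chain would read more cleanly as $|t|^j\leq \mu_1(t)^j=\mu_j(t)$ rather than $|t|\leq \mu_1(t)\leq \mu_j(t)$, but the intended argument is clear and matches the paper exactly.
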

The proof is straightforward, because  $\vert\Omega_\omega(t,t^\prime)\vert\leq 2\vert t\vert\vert t^\prime\vert$ and $|\om|\leq 1$.
\begin{lemma}\label{sin}
\be
\left\vert\frac{d^m}{d\hbar^m}\frac{\sin{\hbar x/2}}\hbar\right\vert\leq \frac{\vert x\vert^{m+1}}{2^{m+1}}.
\ee. 
\end{lemma}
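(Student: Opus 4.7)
\smallskip

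\noindent\textbf{Proof plan for Lemma \ref{sin}.} The key observation is that the singularity at $\hbar=0$ in $\sin(\hbar x/2)/\hbar$ is only apparent and can be removed by rewriting this quotient as an integral of a smooth oscillatory function whose $\hbar$-dependence is trivial to control. The plan is to use the substitution $u = \hbar v$ in
\[
\sin(\hbar x/2)=\int_0^{\hbar x/2}\cos(u)\,du
\]
so as to obtain the identity
\[
\frac{\sin(\hbar x/2)}{\hbar}=\int_0^{x/2}\cos(\hbar v)\,dv,
\]
valid for all $\hbar\in\R$ (including $\hbar=0$) and all $x\in\R$, and in which $\hbar$ appears only in the integrand.

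Once this representation is in place, I would differentiate $m$ times under the integral sign (justified by the smoothness and compactly supported $v$-range), using
\[
\frac{d^m}{d\hbar^m}\cos(\hbar v)=v^{m}\cos\!\bigl(\hbar v+m\pi/2\bigr),
\]
so that $\bigl|\frac{d^m}{d\hbar^m}\cos(\hbar v)\bigr|\leq |v|^{m}$ pointwise, uniformly in $\hbar$. Then by the triangle inequality for integrals,
\[
\left|\frac{d^m}{d\hbar^m}\frac{\sin(\hbar x/2)}{\hbar}\right|
\;\leq\;\left|\int_0^{x/2}|v|^m\,dv\right|
\;=\;\frac{|x/2|^{m+1}}{m+1}
\;\leq\;\frac{|x|^{m+1}}{2^{m+1}},
\]
which is the desired inequality (in fact with an extra factor $1/(m+1)$ to spare).

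There is essentially no obstacle here: the only subtle point is checking that the integral representation holds through $\hbar=0$, which is immediate since both sides are entire in $\hbar$ and agree for $\hbar\neq 0$ by the substitution above. Differentiation under the integral is justified by uniform bounds on the integrand on the compact $v$-interval $[0,x/2]$ (or $[x/2,0]$ if $x<0$).
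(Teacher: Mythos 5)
Your proof is correct and takes essentially the same approach as the paper: express $\sin(\hbar x/2)/\hbar$ as an integral of a cosine (the paper writes $\tfrac12\int_0^x\cos(\hbar t/2)\,dt$, equivalent to your $\int_0^{x/2}\cos(\hbar v)\,dv$ after $t=2v$), differentiate $m$ times under the integral sign, and bound the derivative of cosine by $1$ in absolute value. Your version is in fact slightly cleaner: the paper's displayed computation carries a spurious factor $(-\hbar)^m$ (harmless only because $\hbar\le 1$), which your substitution avoids, and you correctly retain the extra factor $1/(m+1)$ that the paper simply discards.
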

\begin{proof}
Write:
\vskip 6pt\noindent
\begin{eqnarray*}
\frac{d^m}{d\hbar^m}\frac{1}{\hbar}\sin{\hbar x/2}
=\frac{d^m}{d\hbar^m}\frac12\int_0^x\cos{\hbar t/2}\,dt
=\frac{(-\hbar)^m}{2^{m+1}}\int_0^xt^m\cos^{(m)}{(\hbar t/2)}\,dt
\leq\frac{\hbar^m}{2^{m+1}}\int_0^xt^m\,dt. 
\end{eqnarray*}
\vskip 6pt\noindent
whence
$$
\left\vert\frac{d^m}{d\hbar^m}\frac{\sin{\hbar x/2}}\hbar\right\vert\leq \frac{\hbar^m}{2^{m+1}}\left\vert \int_0^xt^m\,dt\right\vert =\frac{\hbar^m\vert x\vert^{m+1}}{2^{m+1}(m+1)}\leq \frac{\vert x\vert^{m+1}}{2^{m+1}}.
$$
\end{proof}
\begin{lemma}
\label{MoyalS}
Let $(\F,G)\in\J^\dagger_\rho$, $0<d+d_1<\rho$, $t=(p,s)$, $t^\prime=(p^\prime,s^\prime)$, $|t|:=|p|+|s|$, $|t^\prime|:=|p^\prime|+|s^\prime|$. Then:
\be
\|\{\F,\G\}_M\|_{\rho-d-d_1}^\dagger \leq \frac{1}{e^2d_1(d+d_1)}\|\F\|_\rho^\dagger \|\G\|_{\rho-d}^\dagger
\ee
\end{lemma}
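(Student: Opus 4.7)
The natural starting point is the Fourier formula \eqref{MBB1} for the Moyal bracket:
\[
\widehat{\{\F,\G\}_M}(t,\hbar)=\frac{1}{\hbar}\int_{\R\times\R^l}\widehat{\F}(t'-t,\hbar)\widehat{\G}(t',\hbar)\sin[\hbar\Omega_\omega(t'-t,t')/2]\,d\lambda(t').
\]
The two purely analytic inputs are Lemmas \ref{sin} and \ref{symp}: taking $m=0$ in the former yields
\[
\left|\frac{\sin(\hbar\Omega_\omega(t'-t,t')/2)}{\hbar}\right|\leq \frac{|\Omega_\omega(t'-t,t')|}{2},
\]
while the definition \eqref{omt} gives directly $|\Omega_\omega(t'-t,t')|\leq |t'-t|\cdot|t'|$ (this is the version of Lemma \ref{symp} with $j=1$ that I will need, and crucially it produces the factor $|t'|$ rather than $|t|$). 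The key design choice is precisely to keep the factor $|t'|$ instead of using the alternative bound $|u|\cdot|t|$ obtained from the bilinearity identity $\Omega_\omega(u,u+t)=\Omega_\omega(u,t)$: the former produces a separable integrand, while the latter does not.

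Inserting the pointwise bound into the definition of $\|\cdot\|^\dagger_{\rho-d-d_1}$ and applying Fubini gives
\[
\|\{\F,\G\}_M\|^\dagger_{\rho-d-d_1}\leq \frac{1}{2}\int\!\!\!\int|\widehat{\F}(t'-t,\hbar)||\widehat{\G}(t',\hbar)|\,|t'-t|\,|t'|\,e^{(\rho-d-d_1)|t|}\,d\lambda(t)\,d\lambda(t').
\]
I would now perform the change of variables $u=t'-t$, $v=t'$, so that $t=v-u$ and the Jacobian is trivial. The estimate $|v-u|\leq|u|+|v|$ yields
\[
e^{(\rho-d-d_1)|t|}=e^{(\rho-d-d_1)|v-u|}\leq e^{(\rho-d-d_1)|u|}\cdot e^{(\rho-d-d_1)|v|},
\]
which is the step that decouples the $u$ and $v$ integrations. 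The double integral then factorizes as
\[
\|\{\F,\G\}_M\|^\dagger_{\rho-d-d_1}\leq \frac{1}{2}\left(\int|\widehat{\F}(u,\hbar)|\,|u|\,e^{(\rho-d-d_1)|u|}\,d\lambda(u)\right)\left(\int|\widehat{\G}(v,\hbar)|\,|v|\,e^{(\rho-d-d_1)|v|}\,d\lambda(v)\right).
\]

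For the final step I would split the exponents asymmetrically so as to generate exactly the norms $\|\F\|^\dagger_\rho$ and $\|\G\|^\dagger_{\rho-d}$: write $e^{(\rho-d-d_1)|u|}=e^{\rho|u|}e^{-(d+d_1)|u|}$ and $e^{(\rho-d-d_1)|v|}=e^{(\rho-d)|v|}e^{-d_1|v|}$, then apply the elementary maximum $\sup_{x\geq 0}xe^{-\alpha x}=1/(e\alpha)$ with $\alpha=d+d_1$ in the first integral and $\alpha=d_1$ in the second. Taking the supremum over $\hbar\in[0,1]$ last, one concludes
\[
\|\{\F,\G\}_M\|^\dagger_{\rho-d-d_1}\leq \frac{1}{2e^2\,d_1(d+d_1)}\|\F\|^\dagger_\rho\,\|\G\|^\dagger_{\rho-d},
\]
which is a fortiori the stated inequality.

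The only non-routine step is really the first: choosing the $|t'-t|\cdot|t'|$ form of the bound on $|\Omega_\omega|$. Any splitting that uses $|t|$ (e.g. $|u|\cdot|t|$) forces a mixed term $|u|^2 e^{-(d+d_1)|u|}$ after the change of variables, which inflates the constant and, more importantly, does not factorize cleanly into the two required norms. Once the right form is identified, the change of variables $u=t'-t$, $v=t'$ is essentially forced, and the remaining arithmetic (triangle inequality for the exponential, splitting of weights, and the two one-variable optima) is routine.
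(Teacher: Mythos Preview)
Your proof is correct and follows essentially the same approach as the paper: bound $|\sin(\hbar\Omega/2)/\hbar|$ by $|\Omega|/2\leq |t'-t|\,|t'|/2$, change variables to $(t'-t,t')$, use the triangle inequality to decouple the exponential weight, and absorb the linear factors via $\sup_{x\geq 0}xe^{-\alpha x}=1/(e\alpha)$. Your presentation is in fact slightly cleaner than the paper's (which passes through an intermediate centering $u=t'-t/2$ before arriving at the same factorization) and retains the extra factor $1/2$ that the paper drops.
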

\begin{proof}
We have by definition
\begin{eqnarray*}
&&
|\{\F,\G\}_M\|^\dagger_{\rho-d-d_1}\leq
 \frac{1}{\hbar}\int_{\R^{2l}}e^{(\rho-d-d_1)|t|}d\lambda(t)\int_{\R^{2l}}|\F(t^\prime)\G(t^\prime-t)|\cdot |\sin{\hbar(t^\prime-t)\wedge t^\prime/\hbar}|\,d\lambda(t^\prime)
\\
&&
\leq  \int_{\R^{2l}}e^{(\rho-d-d_1)|t|}d\lambda(t)\int_{\R^{2l}}|\F(t^\prime)|\cdot |\G(t^\prime-t)|\cdot |(t^\prime-t)|\cdot |t^\prime|\,d\lambda(t^\prime)
\\
&&
=\int_{\R^{2l}}e^{(\rho-d-d_1)|t|}d\lambda(t)\int_{\R^{2l}}|\F(u+t/2)\G(u-t/2)|\cdot |u-t/2|\cdot |u+t/2|\,d\lambda(u)
\\
&&
=\int_{\R^{2l}\times\R^{2l}}e^{(\rho-d-d_1)(|x|+|y|)}|\F(x)\G(y)|\cdot |x|\cdot |y|\,d\lambda(x)d\lambda(y) \leq 
\\
&&
\frac{1}{d_1(d+d_1)}\int_{\R^{2l}}|\F(x)|e^{\rho |x|}\,d\lambda(x) \int_{\R^{2l}}|\G(y)|e^{(\rho-d) |y|}\,d\lambda(x)\leq \frac{1}{e^2d_1(d+d_1)}\|\F\|_\rho^\dagger\|\G\|_{\rho-d}^\dagger 
\end{eqnarray*}
because $\ds \sup_{\alpha\in\R}|\alpha| e^{-\delta\alpha}=\frac{1}{e\delta}, \delta>0$. 
\end{proof}
\subsubsection{ Assertion {\mbox{\bf ($1^\dagger$)}}}\label{1'}
By definition
\begin{eqnarray}
&&
\|\F(\hbar)\sharp\G(\hbar)\|_{\rho,k}^\dagger=
\nonumber
\sum_{\gamma=0}^k\int_{\R^{2l}\times\R^{2l}}|
\partial^\gamma_\hbar [\widehat{\F}(t^\prime-t,\hbar) 
\widehat{\G}(t^\prime,\hbar)e^{i\hbar\Om_\om(t^\prime,t^\prime-t)}] |\mu_{k-\gamma}(t)e^{\rho |t|}\,d\lambda(t^\prime)
d\lambda(t)
\nonumber
\end{eqnarray}
whence
\begin{eqnarray}
&&
\|\F(\hbar)\sharp\G(\hbar)\|^\dagger_{\rho,k}=
\nonumber
\\
&&
\sum_{\gamma=0}^k\sum_{j=0}^\gamma\binom {\gamma}{j}\int_{\R^{2l}\times \R^{2l} }\vert\partial_\hbar^{\gamma-j}
 [\widehat{\F}(t^\prime-t,\hbar) 
\widehat{\G}(t^\prime,\hbar)]\vert\Omega_\om(t^\prime-t,t^\prime)\vert^j \mu_{k-\gamma}(t)e^{\rho |t|}\,d\lambda(t^\prime)
d\lambda(t)=
\nonumber
\\
&&
\nonumber
\sum_{\gamma=0}^k\sum_{j=0}^\gamma\sum_{i=0}^{\gamma-j}\binom {\gamma}{j}\binom{j}{i}\int_{\R^{2l}\times\R^{2l}}
\vert\partial_\hbar^{\gamma-j-i}\widehat{\F}(t^\prime-t,\hbar)
\partial_\hbar^{i}\widehat{\G}(t^\prime,\hbar)
\vert\vert\Omega_\om(t^\prime-t,t^\prime)\vert^j\mu_{k-\gamma}(t)e^{\rho|t|}\,d\lambda(t^\prime)
d\lambda(t)
\end{eqnarray}
By Lemma \ref{symp} and the inequality $\ds \mu_k(t^\prime-t)\leq 2^{k/2}\mu_k(t^\prime)\mu_k(t)$ we get, with $t=(p,s): t^\prime=(p^\prime,s^\prime)$
\begin{eqnarray*}
&&
\vert\Omega_\om(t^\prime-t,t^\prime)\vert^j\mu_{k-\gamma}(t)\leq 2^j\mu_j(t^\prime-t)\mu_j(t^\prime)\mu_{k-\gamma}(t)
\\
&&
\leq 
2^j\mu_jt^\prime-t)\mu_j(t^\prime)\mu_{k-\gamma}(t)2^{(k-\gamma)/2}\mu_{k-\gamma}(t^\prime -t)\mu_{k-\gamma}(t)
\\
&&
\leq 2^{j+(k-\gamma)/2}\mu_{k-\gamma+j}(t^\prime -t)\mu_{k-\gamma+j}(t)
\end{eqnarray*}
Denote now $\gamma-j-i=k-\gamma^\prime$, $i=k-\gamma^{\prime\prime}$ and remark that $j\leq\gamma^\prime$,  $i\leq\gamma-j$. Then:
\begin{eqnarray*}
2^{j+(k-\gamma)/2}\mu_{k-\gamma+j}(t^\prime -t)\mu_{k-\gamma+j}(t)
\leq 2^k\mu_{\gamma^\prime}(t^\prime)\mu_{\gamma^{\prime\prime}}(t)
\end{eqnarray*}
 Since $\ds \binom {\gamma}{j}\binom{j}{i}\leq 4^k$ and the sum over $k$ has $(k+1)$ terms we get:
\begin{eqnarray*}
&&
\|\F(\hbar)\sharp\G(\hbar)\|^\dagger_{\rho,k} \leq 
\\
&&
(k+1)4^k\,\sum_{\gamma^\prime,\gamma^{\prime\prime}=0}^k\int_{\R^{2l}\times\R^{2l}}
|\partial^{k-\gamma^\prime}_\hbar\widehat{\F}(t^\prime -t,\hbar)
|\partial^{k-\gamma^{\prime\prime}}_\hbar\widehat{\G}(t^\prime,\hbar)| 
\mu_{\gamma^\prime}(t^\prime -t)\mu_{\gamma^{\prime\prime}}(t)e^{\rho |t|}\,d\lambda(t^\prime)
d\lambda(t)
\end{eqnarray*}
Now we can repeat the argument of Lemma \ref{MoyalS} to conclude: 
\begin{eqnarray*}
\|\F(\hbar)\sharp\G(\hbar)\|_{\rho,k}^\dagger \leq 
(k+1)4^k \|\F\|^\dagger_{\rho,k} \cdot \|\G\|^\dagger_{\rho,k}
\end{eqnarray*}
which is (\ref{2conv'}). Assertion {\mbox{\bf ($3^\dagger$)}}, formula (\ref{simple'})  is the  particular case of  (\ref{2conv'}) obtained for $\Om_\om=0$, and  Assertion ${\bf (3)}$, formula (\ref{simple}), is  in turn particular case of (\ref{simple'}) .

\subsubsection{ Assertion{\mbox{\bf ($2^\dagger$)}}}\label{2'}
 By definition:
\begin{eqnarray*}
\|\{\F(\hbar),\G(\hbar)\}_M\|^\dagger_{\rho,k}=
\sum_{\gamma=0}^k\int_{\R^{2l}\times\R^{2l}}|
\partial^\gamma_\hbar [\widehat{\F}(t^\prime -t,\hbar) 
\widehat{\G}(t^\prime,\hbar)\sin\hbar\Omega(t^\prime-t,t^\prime)/\hbar] |\mu_{k-\gamma}(t)e^{\rho |t|}\,d\lambda(t^\prime)
d\lambda(t).
\end{eqnarray*}
  Lemma \ref{sin} entails:
  $$
\vert\partial_\hbar^j \sin\hbar\Omega(t^\prime-t,t^\prime)/\hbar\vert\leq \vert \Omega(t^\prime-t,t^\prime)\vert^{j+1}
$$
and therefore:
\begin{eqnarray}
&&
\|\{\F(\hbar),\G(\hbar)\}_M\|_{\rho,k}\leq 
\nonumber
\\
&&
\sum_{\gamma=0}^k\sum_{j=0}^\gamma\binom {\gamma}{j}\int_{\R^{2l}\times \R^{2l} }\vert\partial_\hbar^{\gamma-j}
 [\widehat{\F}(t^\prime -t,\hbar) 
\widehat{\G}(t^\prime,\hbar)]\vert\Omega_\om(t^\prime-t,t^\prime)\vert^{j+1} \mu_{k-\gamma}(t)e^{\rho(|t|}\,d\lambda(t^\prime)
d\lambda(t)=
\nonumber
\\
&&
\nonumber
\sum_{\gamma=0}^k\sum_{j=0}^\gamma\sum_{i=0}^{\gamma-j}\binom {\gamma}{j}\binom{j}{i}\int_{\R^{2l}\times\R^{2l}}
\vert\partial_\hbar^{\gamma-j-i}\widehat{\F}(t^\prime -t,\hbar)
\partial_\hbar^{i}\widehat{\G}(t^\prime,\hbar)
\vert\vert\Omega_\om(t^\prime-t,t^\prime)\vert^{j+1}\mu_{k-\gamma}(t)e^{\rho |t|}\,d\lambda(t^\prime)
d\lambda(t)
\end{eqnarray}
Let us now absorb a factor  $\vert\Omega_\om(t^\prime-t,t^\prime)\vert^{j}$ in exactly the same way as above, and recall that $\vert\Omega_\om(t^\prime-t,t^\prime)\vert\leq \vert (t^\prime-t)t^\prime\vert$. We end up with the inequality:
\begin{eqnarray*}
&&
\|\{\F(\hbar),\G(\hbar)\}_M\|^\dagger_{\rho,k} \leq 
\\
&&
(k+1)4^k\,\sum_{\gamma^\prime,\gamma^{\prime\prime}=0}^k\int_{\R^{2l}\times\R^{2l}}
|\partial^{k-\gamma'}_\hbar\widehat{\F}(t^\prime -t,\hbar)
|\partial^{k-\gamma"}_\hbar\widehat{\G}(t^\prime,\hbar)| |t^\prime -t||t^\prime| 
 \mu_{\gamma^\prime}(t^\prime -t)\mu_{\gamma^{\prime\prime}}(t^\prime)e^{\rho( |t|}\,d\lambda(t^\prime)
d\lambda(t)
\end{eqnarray*}
Repeating once again the argument of Lemma \ref{MoyalS} we finally get:
\begin{eqnarray*}
\|\{\F(\hbar),\G(\hbar)\}_M\|^\dagger_{\rho-d-d_1,k} \leq 
\frac{(k+1)4^k}{e^2d_1(d+d_1)} \|\F\|^\dagger_{\rho,k} \cdot \|\G\|^\dagger_{\rho-d,k}
\end{eqnarray*}
which is (\ref{normaM2'}). Once more, Assertion ${\bf (2)}$ is a particular case of  (\ref{normaM2'}) and Assertion ${\bf (1)}$ a particular case of (\ref{2conv'}). This completes the proof of Proposition \ref{stimeMo}. 

 \vskip 1cm

\section{A sharper version of the semiclassical Egorov theorem}\label{sectionegorov}
Let us state and prove in this section a particular variant of the semiclassical Egorov theorem (see e.g.\cite{Ro}) which establishes the relation between the unitary transformation $\ds e^{i\ep W/i\hbar}$ and the canoni\-cal transformation $\phi^\ep_{\W_0}$ generated by the flow of the symbol $\W(\xi,x;\hbar)|_{\hbar=0}:=\W_0(\xi,x)$ (principal symbol) of $W$ at time $1$.  The present version is sharper in the sense that the usual one allows for a $O(\hbar^\infty)$ error term. 
\begin{theorem}
Let  $\rho>0, k=0,1,\ldots$ and let $A,W\in J^\dagger_k(\rho)$ with symbols $\mathcal A,\
\mathcal W$. Then: 
\be
\nonumber
S_\ep:=e^{i\frac {\ep W}\hbar}(L_\omega+A)e^{-i\frac {\ep W}\hbar}=L_\omega+B 
\ee
where:
\begin{enumerate}
\item $\forall\,0<d<\rho$, 
$B\in J^\dagger_k(\rho-d)$;
\item
\begin{eqnarray*}
\|\B\|^\dagger_{\rho-d,k}\leq\frac{|\ep|(k+1)4^k \|\W\|_{\rho,k}}{ed^2}\left[1-|\ep|(k+1)4^k\|\W\|_{\rho,k}/{ed^2}\right]^{-1}\left[\|\A\|_{\rho,k}+1/{de}\right]\end{eqnarray*}
\item  Moreover the symbol $\mathcal B$ of $B$ is such that:
$$
\L_\om+{\mathcal B}=(\L_\om+\mathcal A)\circ \Phi^\ep_{\mathcal W_0}+O(\hbar)
$$
where $\Phi^\ep_{\mathcal W_0}$ is the Hamiltonian flow of $\mathcal W_0:=\mathcal W|_{\hbar=0}$ at  time $\ep$.
\item Assertions (1), (2), (3) hold true when $(A,B,W)\in J_k(\rho)$ with $\|\A\|^\dagger_{\rho,k}$,  $\|\B\|^\dagger_{\rho,k}$, $\|\W\|^\dagger_{\rho,k}$ replaced by $\|\A\|_{\rho,k}$,  $\|\B\|_{\rho,k}$, $\|\W\|_{\rho,k}$. 
\end{enumerate}
\end{theorem}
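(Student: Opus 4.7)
The natural approach is a Lie series expansion for the conjugation. Since $e^{i\ep W/\hbar}(L_\omega+A)e^{-i\ep W/\hbar}$ satisfies $\partial_\ep S_\ep = \frac{i}{\hbar}[W,S_\ep]$, I would write
\[
S_\ep = \sum_{n=0}^\infty \frac{\ep^n}{n!}\left(\frac{i}{\hbar}\right)^n \underbrace{[W,[W,\ldots,[W}_{n\text{ times}},L_\omega+A]\ldots]],
\]
and pass to symbols. Since the symbol of $[W,Y]/(i\hbar)$ is the Moyal bracket $\{\mathcal W,\mathcal Y\}_M$ by Proposition 3.4, iteration gives
\[
\mathcal{B} = \sum_{n=1}^\infty \frac{(-\ep)^n}{n!} \underbrace{\{\mathcal W,\{\mathcal W,\ldots,\{\mathcal W}_{n\text{ times}},L_\omega+\mathcal A\}_M\ldots\}_M.
\]
Split the inner argument into its $L_\omega$ part and its $\mathcal A$ part and estimate each separately.

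For the $\mathcal A$ contribution, Corollary 3.5 yields, for every $r \geq 1$,
\[
\frac{1}{r!}\,\bigl\|\{\mathcal W,\ldots,\{\mathcal W,\mathcal A\}_M\ldots\}_M\bigr\|^\dagger_{\rho-d,k}
\leq \left(\frac{(k+1)4^k}{ed^2}\right)^r \|\mathcal W\|^{\dagger,r}_{\rho,k}\,\|\mathcal A\|^\dagger_{\rho,k},
\]
so the corresponding series is dominated by a geometric series in $x := |\ep|(k+1)4^k\|\mathcal W\|^\dagger_{\rho,k}/(ed^2)$, convergent under the hypothesis $x<1$. For the $L_\omega$ contribution, Corollary 3.6 gives the stronger estimate
\[
\bigl\|\{\mathcal W,\ldots,\{\mathcal W,L_\omega\}_M\ldots\}_M\bigr\|^\dagger_{\rho-d,k} \leq \frac{1}{ed}\left(\frac{(k+1)4^k}{ed^2}\right)^r \|\mathcal W\|^{\dagger,r}_{\rho,k}
\]
(without an $r!$ on the right), whose contribution to $\mathcal B$ is bounded by $\frac{1}{ed}\sum_{r\geq 1}\frac{x^r}{r!} \leq \frac{1}{ed}\cdot\frac{x}{1-x}$. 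Adding the two gives exactly the bound
\[
\|\mathcal B\|^\dagger_{\rho-d,k} \leq \frac{x}{1-x}\bigl[\|\mathcal A\|^\dagger_{\rho,k} + 1/(ed)\bigr]
\]
claimed in (2), and proves (1) at the same time. Assertion (4) is literally the same computation, using the $\|\cdot\|_{\rho,k}$ versions of Corollaries 3.5 and 3.6, which are already stated there.

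For (3), I would use the expansion (3.46)–(3.47) of the Moyal bracket, which gives $\{\F,\G\}_M = \{\F,\G\}+O(\hbar)$ uniformly under our weighted norms, and the identity (3.48) $\{\F,\L_\om\}_M=\{\F,\L_\om\}$ which is exact since $\L_\om$ is linear. Plugging these into the series and commuting $O(\hbar)$ outside (uniform convergence being guaranteed by the same geometric bound as above) collapses every nested Moyal bracket into the corresponding nested Poisson bracket modulo $\hbar$, and the resulting Lie series is the classical Lie series representation of $(\L_\omega+\mathcal A)\circ\Phi^\ep_{\mathcal W_0}$.

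The main obstacle, and the reason for working inside the algebra $\J^\dagger_k(\rho)$, is the control of the $L_\omega$ contribution: $L_\omega$ itself is not in $\J^\dagger_k$, so one cannot directly apply the product-type estimate of Proposition 3.7(2). This is precisely what Corollary 3.6 is designed for, exploiting that $\{\cdot,\L_\omega\}_M=\{\cdot,\L_\omega\}$ acts as a derivation of order $1$ in $x$, which absorbs only one factor $1/(ed)$ and leaves the remaining $r-1$ Moyal brackets to be estimated as in Corollary 3.5. Balancing the loss of analyticity width $d$ against the factor $1/(ed^2)$ per bracket—so that the geometric series radius $x<1$ becomes the condition on $\ep$—is what produces the denominator in the stated bound.
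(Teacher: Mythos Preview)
Your approach is essentially identical to the paper's: expand the conjugation as a Lie series, split off the $\L_\omega$ piece from the $\mathcal A$ piece, and control each by the iterated Moyal-bracket estimates, then read off the classical limit from $\{\F,\G\}_M=\{\F,\G\}+O(\hbar)$. Two small corrections: your internal references are misnumbered (the relevant results are Proposition~3.9, Proposition~3.11, Corollary~3.13 and Corollary~3.14, not 3.4--3.7), and note that Corollaries~3.13--3.14 are stated only for $\J_k(\rho)$, so for the $\J_k^\dagger(\rho)$ case you are implicitly invoking their obvious $\dagger$-analogues, which follow by the same proof from Proposition~3.11---the paper handles this the same way, remarking that the argument is identical in both cases and carrying it out only once.
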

%\begin{remark} Although we will not need it here let us mention that one can prove the same theorem for larger class of symbols, in particular
%not depending only on the linear combination $\L_\omega (\xi)$.
%\end{remark}
\begin{proof}The proof is  the same in both cases,  since it it is based only on Proposition \ref{stimeMo}. Therefore we limit ourselves to the $\J_k(\rho)$ case.
 
 By Corollary \ref{corA}, Assertion (3), under the present assumptions $H^1(\T^l)$, the domain of the self-adjoint operator $\F(L_\om)+A$, is left invariant by the unitary operator $\ds e^{i\frac {\ep W}{\hbar}}$.  Therefore on  $H^1(\T^l)$  we can write the commutator expansion
$$
S_\ep=L_\om+\sum_{m=1}^\infty \frac{(i\ep)^m}{ \hbar^m m!}\underbrace{[W,[W,\ldots,[W}_{m\ times},L_\om]\ldots]+
\sum_{m=1}^\infty \frac{(i\ep)^m}{ \hbar^m m!}\underbrace{[W,[W,\ldots,[W}_{m\ times},A]\ldots]
$$
whence  the corresponding expansions for the  symbols (from now on we'll skip the $\underbrace{\ldots\ldots\ldots}_{m\ times}$ notation)
\begin{eqnarray*}
&&
{\mathcal S}(x,\xi;\hbar,\ep)=\L_\om(\xi)+\sum_{m=1}^\infty \frac{\ep^m}{m!}\{\W,\{\W,\ldots,\{\W,\L_\om\}_M\ldots\}_M
\\
&&
+\sum_{m=1}^\infty \frac{\ep^m}{m!}\{\W,\{\W,\ldots,\{\W,{\mathcal A}\}_M\ldots\}_M
\end{eqnarray*}
because  $\{\W,\L_\om\}_M=\{\W,\L_\om\}$ by the linearity of $\L_\om$.  Now apply Corollaries \ref{multipleM} and \ref{stimaP}. We get, denoting once again $C_k=(k+1)4^k$:
\begin{eqnarray*}
&&
\|\sum_{m=1}^\infty \frac{(i\ep)^m}{ \hbar^m m!}[W,[W,\ldots,[W,L_\om]\ldots]\|_{L^2\to L^2}\leq \|\sum_{m=1}^\infty \frac{\ep^m}{m!}\{\W,\{\W,\ldots,\{\W,\L_\om\}_M\ldots\}_M\|_{\rho-d,k}
\\
&&
\leq \sum_{m=1}^\infty \frac{|\ep|^m}{m!}\|\{\W,\{\W,\ldots,\{-i\la\om,\nabla_x\ra \W\}_M\ldots\}_M\|_{\rho-d,k}\leq \frac{1}{ed}\sum_{m=1}^\infty \left(\frac{|\ep|C_k\|\W\|_{\rho,k}
}{ed^2}\right)^m
\end{eqnarray*}
\begin{eqnarray*}
&&
\|\sum_{m=1}^\infty \frac{(i\ep)^m}{ \hbar^m m!}[W,[W,\ldots,[W,A]\ldots]
\|_{L^2\to L^2} \leq \|\sum_{m=1}^\infty \frac{\ep^m}{m!}\{\W,\{\W,\ldots,\{\W,{\mathcal A}\}_M\ldots\}_M\|_{\rho-d,k}
\\
&&
\leq \|\A\|_{\rho,k}\sum_{m=1}^\infty \left(\frac{|\ep|C_k\|\W\|_{\rho,k}
}{ed^2}\right)^m
\end{eqnarray*}
Now define:
\be
\label{Aprimo}
B:=\sum_{m=1}^\infty \frac{(i\ep)^m}{ \hbar^m m!}[W,[W,\ldots,[W,\L_\om]\ldots]+\sum_{m=1}^\infty \frac{(i\ep)^m}{ \hbar^m m!}[W,[W,\ldots,[W,A]\ldots].
\ee
Then  we can write:
\vskip 4pt\noindent
\begin{eqnarray*}
&&
\|\B\|_{\rho-d,k}\leq\frac{|\ep|C_k\|\W\|_{\rho,k}}{ed^2}\left[1-|\ep|C_k\|\W\|_{\rho,k}/{ed^2}\right]^{-1}\left[\|\A\|_{\rho,k}+1/{de}\right]
\\
&&
=\frac{|\ep|(k+1)4^k \|\W\|_{\rho,k}}{ed^2}\left[1-|\ep|(k+1)4^k\|\W\|_{\rho,k}/{ed^2}\right]^{-1}\left[\|\A\|_{\rho,k}+1/{de}\right]
\end{eqnarray*}
\vskip 4pt\noindent
This proves assertions (1) and (2). 
\newline
By Remark 2.9, we have:
\begin{eqnarray*}
&&
{\mathcal S}^0_\ep(x,\xi;\hbar)|_{\hbar=0}=\L_\om+\B_\ep(\xi,x;\hbar)|_{\hbar=0}=
\\
&&
\sum_{k=0}^\infty \frac{(\ep)^k}{k!}\{\W_0,\{\W,\ldots,\{\W_0,{\L+\A}\}\ldots\}=e^{\ep \L_{\W_0}}(\L_\om+\A)
\end{eqnarray*}
where $\L_{\W_0}\F=\{\W,\F\}$ denote the Lie derivative with respect to the Hamiltonian flow generated by $\W_0$. Now, by Taylor's theorem
$$
e^{\ep \L_{\W_0}}(\L_\om+\A)=(\L_\om+\A)\circ \phi^\ep_{\W_0}(x,\xi)
$$
and this concludes the proof of the Theorem.
\end{proof} 
\begin{remark}
Let $W$ be a solution of the homological equation (\ref{heq}). Then the  explicit expression of $\W_0$ clearly is:
$$
\W_0=\frac1{\F^\prime(\L_\om(\xi))}\sum_{q\in\Z^\ell}\frac{\V_q (\xi)}{\la \om,q\ra}e^{i\la q,x\ra}
$$
and 
$$
e^{\ep \L_{\W_0}}(\F(\L_\om)+\ep\A)=\F(L_\om)+\ep \N_{0,\ep}(\L_\om)+O(\ep^2).
$$
\end{remark}
Thus $\W_0$ coincides with the expression obtained by first order canonical perturbation theory.  
\vskip 1cm\noindent
%%%%%%%%%%%%%%%%%%%%%%
%%%%%%%%%%%%%%%%%%%%%%
\section{Homological equation:  solution and estimate}
\renewcommand{\thetheorem}{\thesection.\arabic{theorem}}
\renewcommand{\theproposition}{\thesection.\arabic{proposition}}
\renewcommand{\thelemma}{\thesection.\arabic{lemma}}
\renewcommand{\thedefinition}{\thesection.\arabic{definition}}
\renewcommand{\thecorollary}{\thesection.\arabic{corollary}}
\renewcommand{\theequation}{\thesection.\arabic{equation}}
\renewcommand{\theremark}{\thesection.\arabic{remark}}
\setcounter{equation}{0}%
\setcounter{theorem}{0}% 
Let us briefly recall the well known KAM iteration in the quantum context. \par
The first step consists in looking for an $L^2(\T^l)$-unitary map $U_{0,\ep}=e^{i\ep W_0/\hbar}$, $W_0=W_0^\ast$, such that 
$$
S_{0,\ep}:=U_{0,\ep}(L_\om+\ep V_0)U_{0,\ep}^\ast=\F_{1,\ep}(L_\om)+\ep^2 V_{1,\ep}, \quad V_0:=V, \quad \F_{1,\ep}(L_\om)=L_\om+\ep N_0(L_\om).
$$
 Expanding to first order near $\ep=0$ we get that the two unknowns  $W_0$ and $N_0$ must solve the equation
$$
\frac{[L_\om,W_0]}{i\hbar}+V=N_0
$$
$V_{1,\ep}$ is the second order remainder of the expansion. Iterating the procedure:
\vskip 3pt\noindent
\begin{eqnarray*}
&&
U_{\ell,\ep}:= e^{i\ep^{2^\ell}W_\ell/\hbar}; 
\\
&&
 S_{\ell,\ep}:=U_{\ell.\ep}(\F_{\ell,\ep}(L_\om)+\ep^{2^{\ell}} V_{\ell,\ep})U_{\ell,\ep}^\ast= 
= \F_{\ell+1,\ep}(L_\om)+\ep^{2^{\ell+1}} V_{\ell+1}(\ep), 
\\
&&
\frac{[\F_{\ell,\ep}(L_\om),W_{\ell,\ep}]}{i\hbar}+V_{\ell,\ep}=N_{\ell,\ep}
\end{eqnarray*}
\vskip 3pt\noindent
With abuse of notation, we denote by $\F_{\ell,\ep}(\L_\om,\hbar)$, ${\mathcal N}_{\ell,\ep}(\L_\om,\hbar)$, $\V_{\ell,\ep}(\L_\om,\hbar)$ the corresponding symbols.
\newline
The KAM iteration procedure  requires therefore the solution in $J_k(\rho)$ of the operator homological equation in the two unknowns $W$ and $M$ (here we have dropped the dependence on $\ell$ and $\ep$, and changed the notation from $N$ to $M$ to avoid confusion with what follows):
\be
\label{heq}
\frac{[\F(L_\om),W]}{i\hbar}+V=M(L_\om)
\ee
with  the requirement $M(L_\om)\in J_k(\rho)$; the solution has to be expressed in terms of the corresponding Weyl symbols $(\L_\om, \W, \V, {\mathcal M})\in\J_k(\rho)$ in order to obtain estimates uniform with respect to $\hbar$.  Moreover, the remainder has to be estimated in terms of the estimates for $W, M$. 
\newline
 Equation (\ref{heq}), written for the symbols, becomes
\be
\label{Mo}
\{\F(\L_\om(\xi),\hbar),\W(x,\xi;\hbar)\}_M+\V(x,L_\om(\xi);\hbar)={\mathcal M}(\L_\om(\xi),\hbar)
\ee
\subsection{The homological equation}\label{hom}
We will construct and estimate the solution of (\ref{heq}), actually solving (\ref{Mo}) and estimating its solution,  under the following assumptions on $\F$: 
 \vskip 5pt\noindent
{\textbf{Condition (1)}}
{\it
 $(u,\hbar)\mapsto \F(u;\hbar)\in C^\infty(\R\times [0,1]; \R)$;}
\vskip 4pt\noindent
{\textbf{Condition (2)}}
 $$ 
 \inf_{(u,\hbar)\in\R\times [0,1]}\partial_u\F(u;\hbar)>0; 
 \quad \lim_{|u|\to \infty}\frac{|\F(u,\hbar)|}{|u|}=C>0
 $$  
 {\it uniformly with respect to $\hbar\in [0,1]$.} 
\vskip 5pt\noindent
{\textbf{Condition (3)}}
{\it 
Set: 
\be
\label{Kappa}
\K_\F(u,\eta,\hbar)=\frac{\eta}{\F(u+\eta,\hbar)-\F(u,\hbar)}
\ee 
Then there is $0<\Lambda(\F)<+\infty$ such that}
\be
\label{KB}
\sup_{u\in\R,\eta\in\R,\hbar\in [0,1]}\vert\K_\F(u,\eta,\hbar)\vert<\Lambda.
\ee

\vskip 5pt\noindent
The first result deals with the identification of the operators $W$ and $M$  through the  determination of their matrix elements and  corresponding  symbols $\W$ and $\M$. 
\begin{proposition}
\label{WN}
Let $V\in J(\rho)$, $\rho>0$, and let $W$ and $M$ be the minimal closed operators in $L^2(\T^n)$ generated by the infinite matrices 
\vskip 5pt\noindent
\be
\label{sheq1}
 \la e_m,We_{m+q}\ra =\frac{i\hbar\la e_m,Ve_{m+q}\ra}{\F(\la \om,m\ra\hbar,\hbar)-\F(\la\om,(m+q)\ra\hbar,\hbar)},\quad q\neq 0, \quad  \la e_m,We_m\ra=0
\ee
\vskip 6pt\noindent
\be
 \la e_m,Me_m\ra=\la e_m,Ve_m\ra,\qquad \la e_m,Me_{m+q}\ra=0, \quad q\neq 0
\label{sheq2}
\ee
on the eigenvector basis $e_m: m\in\Z^l$ of $L_\om$. Then:
\begin{enumerate}
 \item 
 $W$ and $M$ are continuous and solve the homological equation (\ref{heq}); 
 \item The symbols $\W(x,\xi;\hbar)$ and $\M(\xi,\hbar)$ have the expression:
 \bea
 \label{defW}
 &&
 \M(\xi;\hbar)=\overline{\truc{\V}}(\L_\om(\xi);\hbar);\quad \W(\L_\om(\xi),x;\hbar)=\sum_{q\in\Z^l,q\neq 0}\truc{\W}(\L_\om(\xi),q;\hbar)e^{i\la q,x\ra}
 \\
 &&
 \truc{\W}(\L_\om(\xi),q;\hbar):=\frac{i\hbar\truc{\V}(\L_\om(\xi);q;\hbar)}{\F(\L_\om(\xi);\hbar)-\F(\L_\om(\xi+q),\hbar)}, \;q\neq 0; \quad \overline{\truc{\W}}(\L_\om(\xi);\hbar)=0.
 \eea
 \vskip 4pt\noindent
 Here the series in (\ref{defW})  is $\|\cdot\|_\rho$ convergent; $\overline{\truc{\V}}(\L_\om(\xi);\hbar)$ is the $0$-th coefficient in the Fourier expansion of $\V(\L_\om(\xi),x,\hbar)$:
 $$
\V(\L_\om(\xi),x,\hbar)=\sum_{q\in\Z^l}\,{\truc{\V}}(\L_\om(\xi),q;\hbar)e^{i\la q,x\ra}.
$$
  \end{enumerate}
\end{proposition}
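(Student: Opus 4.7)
The plan is to work entirely at the level of matrix elements in the eigenbasis $\{e_m\}_{m\in\Z^l}$ of $L_\om$, using the explicit matrix-element formula of Corollary \ref{corA}, and to read off the symbols $\W,\M$ from those matrix elements by running that same formula backwards. The point is that everything in (\ref{sheq1})--(\ref{sheq2}) is already diagonal in $\{e_m\}$ up to the single shift $m\mapsto m+q$, so the operator equation (\ref{heq}) collapses to a scalar divisibility problem at each pair $(m,q)$, and the Weyl symbol is recovered in closed form.

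First I would check that the right-hand side of (\ref{sheq1}) is well defined. Condition (2) on $\F$, combined with $\la\om,q\ra\neq 0$ for $q\neq 0$ (which is provided by the Diophantine assumption (H1)), makes the denominator nonzero, and the quotient is controlled by
\[
\la e_m,We_{m+q}\ra \;=\; -\,\frac{i\,\K_\F(\la\om,m\ra\hbar,\la\om,q\ra\hbar,\hbar)}{\la\om,q\ra}\,\la e_m,Ve_{m+q}\ra,
\]
so Condition (3) and the Diophantine bound $|\la\om,q\ra|^{-1}\leq\gamma|q|^\tau$ give $|W_{m,m+q}|\leq \Lambda\gamma|q|^\tau\,|V_{m,m+q}|$. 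The Hermitian symmetry $W_{m+q,m}=\overline{W_{m,m+q}}$ follows from $V=V^\ast$ together with the sign flip of the denominator under $q\leftrightarrow -q$.

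To identify the Weyl symbols, I would use (\ref{elm5}): for an operator with symbol $\A(\xi,x,\hbar)=\F(\L_\om(\xi),x,\hbar)$ one has $\la e_m,A\,e_{m+q}\ra=\truc{\F}(\la\om,m+q/2\ra\hbar,-q,\hbar)$. Defining $\W$ and $\M$ by (\ref{defW}) and applying (\ref{elm5}) to their Weyl quantizations reproduces exactly (\ref{sheq1})--(\ref{sheq2}), provided the symbol-level denominator $\F(\L_\om(\xi),\hbar)-\F(\L_\om(\xi+q),\hbar)$ is interpreted with the symmetric half-shift $\xi\mapsto\xi\pm\la\om,q\ra\hbar/2$ imposed by Weyl calculus. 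The homological equation is then verified matrix-wise: since $\F(L_\om)e_m=\F(\la\om,m\ra\hbar,\hbar)e_m$,
\[
\frac{\la e_m,[\F(L_\om),W]e_{m+q}\ra}{i\hbar}
\;=\;\frac{\F(\la\om,m\ra\hbar,\hbar)-\F(\la\om,m+q\ra\hbar,\hbar)}{i\hbar}\,\la e_m,We_{m+q}\ra,
\]
and substitution of (\ref{sheq1}) produces exactly $\la e_m,Me_{m+q}\ra-\la e_m,Ve_{m+q}\ra$; the identity extends from trigonometric polynomials to $H^1(\T^l)$ by density. For the analytic claims in (2), I would bound each Fourier coefficient by $\|\truc{\W}(\cdot,q,\hbar)\|_{L^1_\rho}\leq\Lambda\gamma|q|^\tau\|\truc{\V}(\cdot,q,\hbar)\|_{L^1_\rho}$ and absorb the polynomial $|q|^\tau$ into the exponential weight $e^{\rho|q|}$, giving $\|\W\|_\rho$-convergence of the series in (\ref{defW}). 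Boundedness of $W$ on $L^2(\T^l)$ then follows from the Calderón--Vaillancourt-type estimate (\ref{stimg}), and boundedness of $M$ is immediate from $|\M|\leq\|\V\|_\rho$.

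The main obstacle is not the matrix computation itself but the bookkeeping needed to reconcile the operator-level formula (\ref{sheq1}), whose denominators sit at the eigenvalues $\la\om,m\ra\hbar$, with the symbol-level formula (\ref{defW}), whose natural evaluation point is the midpoint $\la\om,m+q/2\ra\hbar$ dictated by Weyl quantization; once this translation is made explicit, the rest is the standard KAM trade-off of a polynomial small-divisor loss $|q|^\tau$ against the exponential Fourier decay built into the norm $\|\cdot\|_\rho$.
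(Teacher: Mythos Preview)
Your proposal is correct and follows essentially the same approach as the paper: work in the eigenbasis $\{e_m\}$, derive the matrix elements (\ref{sheq1})--(\ref{sheq2}) directly from the homological equation, control the small divisors via the factorization through $\K_\F$ and the Diophantine bound to get $|\la e_m,We_{m+q}\ra|\le\gamma|q|^\tau\Lambda|\la e_m,Ve_{m+q}\ra|$, and then read off the symbols $\W,\M$ by invoking the matrix-element formulas of Corollary~\ref{corA}. Your added remarks on Hermitian symmetry and on the Weyl midpoint shift are correct refinements that the paper leaves implicit; the only minor looseness (shared with the paper's own statement) is that absorbing the factor $|q|^\tau$ into the exponential weight really yields $\|\cdot\|_{\rho-d}$-convergence for any $0<d<\rho$ rather than full $\|\cdot\|_\rho$-convergence.
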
 
\begin{proof}
Writing the homological equation in the eigenvector basis $e_m: m\in\Z^l$ we get
\vskip 7pt\noindent
\be
\label{mheq}
\la e_m,\frac{[\F(L_\om),W]}{i\hbar}e_n\ra+\la e_m,Ve_n\ra=\la e_m,M(L_\om)e_n\ra\delta_{m,n}
\ee
\vskip 5pt\noindent
which immediately yields (\ref{sheq1},\ref{sheq2}) setting $n=m+q$. As far the continuity is concerned, we have:
\vskip 6pt\noindent
$$
\frac{i\hbar}{\F(\la \om,m\ra\hbar,\hbar)-\F(\la\om,(m+q)\ra\hbar,\hbar)}=\la\om,q\ra^{-1}\frac{\eta}
{\F(\la \om,m\ra\hbar,\hbar)-\F(\la\om,m\ra\hbar+\eta,\hbar)},\quad \eta:=\la q,\om\ra\hbar.
$$
\vskip 7pt\noindent
and therefore, by (\ref{KB}) and the diophantine condition:
$$
|\la e_m,We_{m+q}\ra|\leq \gamma |q|^\tau\Lambda |\la e_m,Ve_{m+q}\ra|.
$$
The assertion now follows by Corollary \ref{corA}, which also entails the $\|\cdot\|_\rho$ convergence of the series (\ref{defW}) because $\V\in \J_\rho$. Finally, again by Corollary \ref{corA}, formulae \eqref{elm4}, \eqref{elm5}, we can write
$$
\la e_m,We_{m+q}\ra= \truc{\W}(\la \om,(m+q/2)\ra\hbar,q,\hbar); \quad
\la e_m,Me_m\ra=\M(\la\om,m\ra\hbar,\hbar)=\truc{\V}(\L_\om(m\hbar),0,\hbar)
$$
and this concludes the proof of the Proposition.
\end{proof}

The basic example of $\F$ is the following one. 
Let:
 \bea
\label{FNl}
&&
\bullet \qquad  \F_{\ell}(u,\ep;\hbar)=u+\Phi_{\ell}(u,\ep,\hbar),\qquad \ell=0,1,2,\ldots
\\
&&
\bullet \qquad \Phi_{\ell}(\ep,\hbar):=\ep\N_{0}(u;\ep,\hbar)+\ep^2\N_{1}(u;\ep,\hbar)+\ldots+\ep_{\ell}\N_{\ell}(u,\ep,\hbar), \quad \ep_{j}:=\ep^{2^{j}}. 
\eea
  where we assume holomorphy of $\ep\mapsto \N_s(u,\ep,\hbar)$ in the unit disk and the existence of $\rho_0>\rho_1>\ldots>\rho_{\ell}>0$ such that:
  \begin{itemize}
  \item[($N_s$)] 
   $\ds\qquad\qquad\qquad\qquad\quad \max_{|\ep|\leq 1} \vert\N\vert_{\rho_s}<\infty, \qquad 
.$ 
\end{itemize}
Denote, for $\zeta\in\R$:
\vskip 6pt\noindent
\be
\label{gl}
g_\ell(u,\zeta;\ep,\hbar):=\frac{\Phi_{\ell-1}(u+\zeta;\ep,\hbar)-\Phi_{\ell-1}(u;\ep,\hbar)}{\zeta}
\ee
\vskip 6pt\noindent

Let furthermore:
\bea
\label{ddll}
&&
 0<d_{\ell}<\ldots<d_0<\rho_0, \quad 0<\rho_0:=\rho; 
\\
&&
\nonumber
  \rho_{s+1}=\rho_s-d_{s}>0, \;s=0,\ldots,\ell-1
 \\
 &&
  \delta_\ell:=\sum_{s=0}^{\ell-1}d_\ell  <\rho
 \eea
   and set, for $j=1,2,\ldots$:
\bea
\label{theta}
&&
 \theta_{\ell,k}(\N,\ep):=\sum_{s=0}^{\ell-1}\frac{|\ep_s|\,|\N_s|_{\rho_s,k}}{ed_{s}}, \qquad \theta_{\ell}(\N,\ep):=\theta_{\ell,0}(\N,\ep). 
\eea  
By  Remark 2.4 we have 
\begin{eqnarray}
\label{Theta}
&&
 \theta_{\ell,k}(\N,\ep)=\sum_{s=0}^{\ell-1}\frac{|\ep_s|\,\|\N_s\|_{\rho_s,k}}{ed_{s}}
\end{eqnarray}
\begin{lemma}
\label{propN}
In the above assumptions:
\begin{enumerate}
\item For any $R>0$ the function $\zeta\mapsto g_\ell(u,\zeta,\ep,\hbar)$ is holomorphic in $\{\zeta\;|\,\,|\zeta|<R\,|\,|\Im\zeta|<\rho\}$, uniformly on compacts with respect to $(u,\ep,\hbar)\in\R\times\R\times [0,1]$;
\vskip 5pt\noindent
\item For any $n\in\Na\cup\{0\}$:
\be
\label{convN}
\sup_{{\zeta\in\R}}\,|[g(u,\zeta,\ep,\hbar)]^n|_{\rho_\ell}\leq [\theta_{\ell}(\N,\ep)]^{n} 
\ee
\item Let:
\be
\label{epbar}
\max_{|\ep|\leq L}{\theta_{\ell}(\N,\ep)}<1, \qquad L>0. 
\ee
Then:
\be
\label{stimaKg}
\sup_{\zeta\in\R;u\in\R}|\K_\F(u,\zeta,\ep,\hbar)|_{\rho_\ell}\leq \frac{1}{|\zeta|}\cdot \frac1{1-\theta_{\ell}(\N,\ep)}
\ee
\item
\bea
&&
\label{stimadgu}
\sup_{\zeta\in\R}\,|\partial^j_u g(u,\zeta,\ep,\hbar)|_{\rho_\ell}\leq  \theta_{\ell,j}(\N,\ep)
\\
&&
\label{stimadgeta}
\sup_{\zeta\in\R}\,|\partial^j_\zeta g(u,\zeta,\ep,\hbar)|_{\rho_\ell}\leq  \theta_{\ell,j}(\N,\ep)
\\
&&
\label{stimadgh}
\sup_{\zeta\in\R}\,|\partial^j_\hbar g(u,\zeta,\ep,\hbar)|_{\rho_\ell
}\leq \theta_{\ell,j}(\N,\ep).
\eea
\end{enumerate}
\end{lemma}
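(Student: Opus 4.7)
The unifying tool is the Fourier representation
\[
g_{\ell}(u,\zeta,\ep,\hbar)=\sum_{s=0}^{\ell-1}\ep_{s}\int_{\mathbb R}\widehat{\N}_{s}(p;\ep,\hbar)\,e^{ipu}\,K(p,\zeta)\,dp,\qquad K(p,\zeta):=\frac{e^{ip\zeta}-1}{\zeta}=ip\int_{0}^{1}e^{ipt\zeta}\,dt.
\]
The kernel $K$ is entire in $\zeta$, and on the real line it satisfies $|K(p,\zeta)|\leq |p|$ and, more generally, $|\partial^{j}_{\zeta}K(p,\zeta)|\leq |p|^{j+1}/(j+1)$. In the strip $|\Im\zeta|<\rho_{s}$ one has $|K(p,\zeta)|\leq |p|e^{\rho_{s}|p|}$, which combined with the decay of $\widehat{\N}_{s}$ built into $|\N_{s}|^{\dagger}_{\rho_{s}}<\infty$ legitimates both the integral and differentiation under the integral. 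This immediately yields assertion~(1): for fixed real $u$ in any compact, $\zeta\mapsto g_{\ell}$ is holomorphic in $\{|\zeta|<R\}\cap\{|\Im\zeta|<\rho\}$ (holomorphy being destroyed, a priori, only by the smallest $\rho_{s}$, which is $\rho_{\ell-1}\geq \rho$).

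The Fourier transform of $u\mapsto g_{\ell}(u,\zeta,\ep,\hbar)$ equals $\sum_{s}\ep_{s}\widehat{\N}_{s}(p)K(p,\zeta)$, so for real $\zeta$,
\[
\bigl|g_{\ell}(\,\cdot\,,\zeta)\bigr|_{\rho_{\ell}}\;\leq\;\sum_{s=0}^{\ell-1}|\ep_{s}|\int_{\mathbb R}|p|\,|\widehat{\N}_{s}(p)|\,e^{\rho_{\ell}|p|}\,dp.
\]
The key telescoping trick is to write $e^{\rho_{\ell}|p|}\leq e^{-d_{s}|p|}e^{\rho_{s}|p|}$ (valid because $\rho_{\ell}\leq \rho_{s+1}=\rho_{s}-d_{s}$) and then use the elementary bound $|p|e^{-d_{s}|p|}\leq 1/(ed_{s})$. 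This gives
\[
\bigl|g_{\ell}(\,\cdot\,,\zeta)\bigr|_{\rho_{\ell}}\;\leq\;\sum_{s=0}^{\ell-1}\frac{|\ep_{s}|\,|\N_{s}|_{\rho_{s}}}{e\,d_{s}}\;=\;\theta_{\ell}(\N,\ep).
\]
The estimate \eqref{convN} for arbitrary $n$ then follows from the submultiplicativity of the norm on the Banach algebra, namely \eqref{plus}: $\bigl|g_{\ell}^{n}\bigr|_{\rho_{\ell}}\leq \bigl|g_{\ell}\bigr|_{\rho_{\ell}}^{n}\leq \theta_{\ell}^{n}$.

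For assertion~(3) I observe that since $\F(u+\zeta)-\F(u)=\zeta+\Phi_{\ell-1}(u+\zeta)-\Phi_{\ell-1}(u)=\zeta\bigl(1+g_{\ell}(u,\zeta)\bigr)$, the quotient $\K_{\F}$ factors as $\K_{\F}(u,\zeta)=\bigl(1+g_{\ell}(u,\zeta)\bigr)^{-1}$. Under the smallness hypothesis $\theta_{\ell}<1$, the Neumann expansion $\K_{\F}=\sum_{n\geq 0}(-g_{\ell})^{n}$ converges in $|\cdot|_{\rho_{\ell}}$ by step~(2), yielding the geometric-series bound leading to \eqref{stimaKg}. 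The derivative estimates (5.15)--(5.17) reduce to the same telescoping argument: a $\partial_{u}^{j}$ multiplies the Fourier integrand by $(ip)^{j}$, a $\partial_{\zeta}^{j}$ replaces $K$ by $\partial_{\zeta}^{j}K$ which is again bounded by a power of $|p|$, and a $\partial_{\hbar}^{j}$ just replaces $\widehat{\N}_{s}$ by its $\hbar$-derivative, all of which are controlled by $|\N_{s}|_{\rho_{s},j}$ after absorbing the extra $|p|^{\alpha}$ through the $\mu_{k-\gamma}$ weights entering the norm (Definition 2.2).

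The main technical obstacle, and the reason to insist on the precise weighting $\mu_{k-\gamma}$, is exactly the control of the extra powers of $|p|$ generated by the combined action of $\partial_{u}$, $\partial_{\zeta}$ and (through the chain in the kernel $K$) $\partial_{\hbar}$ while keeping $\rho_{\ell}$ arbitrarily close to the starting $\rho$. The losses $d_{s}$ must be large enough to absorb these polynomial factors via $|p|^{\alpha}e^{-d_{s}|p|}\leq(\alpha/(ed_{s}))^{\alpha}$, but small enough that $\delta_{\ell}=\sum d_{s}<\rho$; balancing these two requirements is what fixes the constants appearing in $\theta_{\ell,k}$ and will reappear in the KAM iteration in \S\ref{towkam}.
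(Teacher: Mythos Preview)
Your argument is correct and follows the same route as the paper: Fourier representation of $g_\ell$ with the kernel bound $|K(p,\zeta)|\leq|p|$ (the paper uses the equivalent $\sin$ form $g_\ell=\frac{2i}{\zeta}\sum_s\ep_s\int\widehat{\N}_s(p)e^{ip(u+\zeta/2)}\sin(\zeta p/2)\,dp$), the shift $e^{\rho_\ell|p|}\leq e^{-d_s|p|}e^{\rho_s|p|}$ combined with $|p|e^{-d_s|p|}\leq 1/(ed_s)$, submultiplicativity for powers (the paper cites Proposition~\ref{stimeMo}(3) at $k=0$, your citation of \eqref{plus} is equally valid and more direct), and the Neumann series for $(1+g_\ell)^{-1}$. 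One harmless slip: the $\rho_s$ are \emph{decreasing}, so $\rho_{\ell-1}\leq\rho$, not $\geq$; incidentally, from the definition \eqref{Kappa} one indeed has $\K_\F=(1+g_\ell)^{-1}$ exactly as you wrote, so the extra $1/|\zeta|$ in \eqref{stimaKg} and in the paper's displayed identity for $\K_\F$ is a typo there rather than an omission on your part.
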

\begin{proof}
The holomorphy is obvious given the holomorphy of $\N_s(u;\ep,\hbar)$. To prove the estimate (\ref{convN}), denoting $\widehat{\N}_s(p,\ep,\hbar)$ the Fourier transform of $\N_s(\xi,\ep,\hbar)$ we write
\vskip 4pt\noindent
\begin{eqnarray}
&&
\label{gF}
g_\ell(u,\zeta,\ep,\hbar)=\frac{1}{\zeta}\sum_{s=0}^{\ell-1}\,\ep_s\,\int_\R\widehat{\N}_\ell(p,\ep,\hbar)(e^{i\zeta p}-1)e^{iu p}\,dp=
\\
\nonumber
&&
\frac{2i}{\zeta}\sum_{s=0}^{\ell-1}\,\ep_s\,\int_\R\widehat{\N}_\ell(p,\ep,\hbar)e^{ip(u+\zeta/2)}\sin{\zeta p/2}\,dp\qquad\quad 
\end{eqnarray}
which entails:
\begin{eqnarray*}
&&
\sup_{{\zeta\in\R}}|g_\ell(u,\zeta,\ep,\hbar)|_{\rho_\ell}=\sup_{{\zeta\in\R}}\int_\R\,|\widehat{g}_\ell (p,\zeta,\ep,\hbar)|e^{\rho_\ell |p|}\,dp
\\
&&
\leq  \max_{\hbar\in [0,1]}\sum_{s=0}^{\ell-1}|\ep_s|\, \int_\R|\widehat{\N}_s(p,\ep,\hbar) p|e^{(\rho_s-d_s) |p|}\,dp
 \leq \frac1{e}\sum_{s=0}^{\ell-1}\,|\ep_s|\,\frac{|\N_s|_{\rho_s}}{d_s}=
\theta_\ell(\N,\ep,1)\,\qquad 0<d_s<\rho_s. 
\end{eqnarray*}
\vskip 4pt\noindent
 Hence Assertion (3) of Proposition \ref{stimeMo}, considered for $k=0$, immediately yields (\ref{convN}). Finally, if $g_\ell$ is defined by (\ref{gl}), then:
$$
\K_\F(u,\zeta,\ep,\hbar)=\frac{1}{\zeta}\frac{1}{1+ g_\ell(u,\zeta,\ep,\hbar)}
$$
and the estimate (\ref{stimaKg}) follows from (\ref{convN}) which makes possible the expansion into the geome\-trical series
\be
\label{sgg}
\frac{1}{1+g_\ell(u,\zeta,\ep,\hbar)}=\sum_{n=0}^\infty\,(-1)^n\,g_\ell(u,\zeta,\ep,\hbar)^n
\ee
\vskip 5pt\noindent
 convergent in the $\theta_{\ell}(\N,\ep)$ norm.  To see (\ref{stimadgu}), remark that (\ref{gF}) yields:
 \vskip 5pt\noindent
 \begin{eqnarray*}
 &&
 \partial^j_u g_\ell(u,\zeta,\ep,\hbar)=\frac{2}{\zeta}\sum_{s=0}^{\ell-1}\,\ep_s\,\int_\R\widehat{\N}_\ell(p,\ep,\hbar)(ip)^j e^{ip(u+\zeta)/2}\sin{\zeta p/2}\,dp.
 \end{eqnarray*}
 Therefore:
  \begin{eqnarray*}
 &&
\sup_{{\zeta\in\R}}\,| \partial^j_u g_\ell(u,\zeta,\ep,\hbar)|_{\rho_\ell}\leq \sup_{{\zeta\in\R}}\,\max_{\hbar\in [0,1]}
2\sum_{s=0}^{\ell-1}\,|\ep_s|\int_\R|\widehat{\N}_s(p,\ep,\hbar)||p|^j|\sin{\zeta p/2}|/{\zeta}|e^{\rho_\ell|p|}\,dp
\\
&&
\leq \sup_{{\zeta\in\R}}\,\max_{\hbar\in [0,1]}
2\sum_{s=}^{\ell-1}\,|\ep_s|\int_\R|\widehat{\N}_s(p,\ep,\hbar)||p|^j|\sin{\zeta p/2}|/{\zeta}|e^{(\rho_s-d_s)|p|}\,dp
\\
&&
\leq 
\sup_{p\in\R}\,[|p|\,\sum_{s=0}^{\ell-1}\,|\ep_s|\,e^{-d_s |p|}]\max_{\hbar\in [0,1]}\int_\R\,|p|^j\widehat{\N}(p,\ep,\hbar)e^{\rho_s|p|}\,dp 
\\
&&
\leq \frac1{e}\sum_{s=0}^{\ell-1}\,|\ep_s|\frac{|\N_s|_{\rho_s,j}}{d_s}\leq  \theta_{\ell,j}(\N,\ep)
\end{eqnarray*}
(\ref{stimadgeta}) is proved by exactly the same argument. Finally, 
to show (\ref{stimadgh}) we write:
 \begin{eqnarray*}
 &&
\sup_{{\zeta\in\R}}| \partial^j_\hbar g_\ell(u,\zeta,\ep,\hbar)|_{\rho_\ell}
\leq  \sup_{{\zeta\in\R}}\max_{\hbar\in [0,1]}
2\sum_{s=0}^{\ell-1}\,|\ep_s|\int_\R|\partial^j_\hbar\widehat{\N_s}(p,\ep,\hbar)|\cdot|\sin{\zeta p/2}|/{\zeta}|e^{\rho_\ell |p|}\,dp
\\
&&
\leq 
\max_{\hbar\in [0,1]}\sum_{s=0}^{\ell-1}\,|\ep_s|\int_\R|\partial^j_\hbar\widehat{\N}(p,\ep,\hbar)|e^{(\rho_s-d_s)|p|}\,dp
\leq \theta_{\ell}(\N,\ep)
\end{eqnarray*}
\vskip 4pt\noindent
 This proves the Lemma.
\end{proof}
By  \textbf{Condition (1)} the operator family $\hbar \mapsto \F(L_\om;\ep,\hbar)$, defined by the spectral theorem, is self-adjoint in $L^2(\T^l)$; by \textbf{Condition (2)} $D(\F(L_\om))=H^1(\T^l)$. Since $L_\om$ is a first order operator with symbol  $\L_\om$, the symbol of $\F(L_\om;\ep,\hbar)$ is $\F(\L_\om(\xi),\ep,\hbar)$. 
We can now state the main result of this section. Let $\F_\ell(x,\ep,\hbar)$ be as in Lemma \ref{propN}, which entails the validity of 
\textbf{Conditions  (1), (2), (3)}.  
\begin{theorem}
\label{homo}
\label{homeq}
Let  $V_\ell\in J_k(\rho_\ell)$, $\ell=0,1\ldots$, $V_1\equiv V$  for some $\rho_\ell> \rho_{\ell+1}>0$, $k=0,1,\ldots$. Let  $\V_\ell(\L_\om(\xi),x;\ep,\hbar)\in\J_k(\rho)$ be its symbol.  Then for any  
$\ds \theta_{\ell}(\N,\ep)<1$
 the homological equation (\ref{heq}), rewritten as
 \vskip 4pt\noindent
 \be
 \label{heqell}
 \frac{[\F_\ell(L_\om),W_\ell]}{i\hbar}+V_{\ell}=N_\ell(L_\om,\ep)
\ee
 \vskip 6pt\noindent
\be
\label{Moell}
\{\F_\ell(\L_\om(\xi),\ep,\hbar),\W_\ell(x,\xi;\ep,\hbar)\}_M+\V_{\ell}(x,L_\om(\xi);\ep,\hbar)={\mathcal N}_\ell(\L_\om(\xi),\ep,\hbar)
\ee 
\vskip 4pt\noindent
   admits a unique solution $(W_\ell,N_\ell)$ of Weyl symbols  $\W_\ell(\L_\om(\xi),x;\ep,\hbar)$, $\N_\ell(\L_\om(\xi),\ep,\hbar)$ such that
\begin{enumerate}
\item $W_\ell=W^\ast_\ell\in J_k(\rho_\ell)$, with:  
\bea
&&
\label{Thm5.1}
\|W_\ell\|_{\rho_{\ell+1},k}=\|\W\|_{\rho_{\ell+1},k}\leq  A(\ell,k,\ep)\|\V_\ell\|_{\rho_{\ell},k}
\\
\nonumber
&&
{}
\\
&&
\label{Adrk}
A(\ell,k,\ep)=\gamma \frac{\tau^\tau}{(ed_\ell)^\tau}\left[1+\frac{2^{k+1}(k+1)^{2(k+1)}k^k}{(e\delta_\ell)^{k}[1-\theta_\ell(\N,\ep)]^{k+1}} \theta_{\ell,k}^{k+1}\right].
 \eea
\vskip 6pt\noindent
\item $\N_\ell=\overline{\V}_\ell$; therefore $\N_\ell\in J_k(\rho_\ell)$ and $ \|\N \|_{\rho_\ell,k}
\leq  \|\V_\ell\|_{\rho_\ell,k} .$
\end{enumerate}
\end{theorem}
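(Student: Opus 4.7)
My plan is to invoke Proposition \ref{WN} to explicitly identify $(W_\ell, N_\ell)$ through their matrix elements on the eigenbasis $\{e_m\}_{m \in \Z^l}$ of $L_\om$, and then to reduce the estimates on $W_\ell$ to norm estimates on the function $\K_{\F_\ell}$ defined in \eqref{Kappa}. The first check is that the specific $\F_\ell$ of \eqref{FNl} satisfies Conditions (1)--(3): smoothness and monotonicity follow from the form $\F_\ell(u,\ep,\hbar)=u+\Phi_{\ell-1}(u,\ep,\hbar)$ provided $|\partial_u \Phi_{\ell-1}|$ is strictly less than $1$, which in turn is a direct consequence of $\theta_\ell(\N,\ep) < 1$; the uniform bound \eqref{KB} is exactly Lemma \ref{propN}(3), formula \eqref{stimaKg}. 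This already yields assertion (2), since by \eqref{sheq2} we have $\N_\ell = \overline{\V}_\ell$ (the $q=0$ Fourier coefficient of $\V_\ell$), which evidently inherits the $\|\cdot\|_{\rho_\ell, k}$ norm of $\V_\ell$.

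Turning to $W_\ell$, I use formula \eqref{elm5} of Corollary \ref{corA} relating matrix elements and symbols, namely $\la e_m, W_\ell e_{m+q}\ra = \truc{\W}_\ell(\la\om,m+q/2\ra\hbar, q, \hbar)$, together with \eqref{sheq1} and the change of variable $u = \la\om, m+q/2\ra\hbar$, $\eta = \la\om,q\ra\hbar$. A short calculation then yields
\[
\truc{\W}_\ell(u, q, \hbar) \;=\; -\frac{i}{\la\om, q\ra}\, \K_{\F_\ell}\bigl(u - \eta/2,\, \eta,\, \ep,\, \hbar\bigr)\,\truc{\V}_\ell(u, q, \hbar),
\]
which is the key representation of the symbol of $W_\ell$ as a product of three pieces. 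The diophantine condition \eqref{DC} bounds $|\la\om, q\ra|^{-1}$ by $\gamma|q|^\tau$, and the standard exponential absorption $\sup_{q\neq 0} |q|^\tau e^{-d_\ell |q|} \leq \bigl(\tau/(ed_\ell)\bigr)^\tau$ converts this factor into a loss of analyticity of width $d_\ell$, producing precisely the prefactor $\gamma\tau^\tau/(ed_\ell)^\tau$ in \eqref{Adrk}.

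The core of the proof is to estimate the middle factor $\K_{\F_\ell}$ in the $\|\cdot\|_{\rho_{\ell+1}, k}$ norm. Starting from the geometric expansion \eqref{sgg},
\[
\K_{\F_\ell}(u, \eta, \ep, \hbar) \;=\; \frac{1}{\eta}\sum_{n=0}^{\infty}(-1)^n g_\ell(u, \eta, \ep, \hbar)^n,
\]
which converges in the appropriate norm by Lemma \ref{propN}(2) thanks to $\theta_\ell(\N,\ep)<1$, I apply Leibniz on the $k$ $\hbar$-derivatives and on the polynomial weights $\mu_{k-\gamma}$ present in the definition \eqref{sigom}. The $1/\eta$ factor multiplies the $\hbar$ coming from the matrix-element formula \eqref{sheq1} and cancels. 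Derivatives of $g_\ell$ with respect to $u$, $\eta$, or $\hbar$ are controlled uniformly by $\theta_{\ell,k}(\N,\ep)$ via Lemma \ref{propN}(4), while multiplication by $\truc{\V}_\ell$ is handled by the algebra estimate \eqref{simple} of Proposition \ref{stimeMo}, applied iteratively. Summing the geometric series in the norm produces the factor $(1-\theta_\ell)^{-1}$ in the case $k=0$; distributing $k$ derivatives across the powers $g_\ell^n$ sharpens this to $(1-\theta_\ell)^{-(k+1)}$ together with a combinatorial prefactor of order $2^{k+1}(k+1)^{2(k+1)}k^k/(e\delta_\ell)^k$ tracking the redistribution of the total analyticity loss $\delta_\ell = \sum_{s<\ell} d_s$ across the powers of $g_\ell$.

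The main obstacle will be the bookkeeping in this last step: each of the $k$ $\hbar$-derivatives can fall on any of the $n$ factors of $g_\ell$ or on $\truc{\V}_\ell$, and the polynomial weight $\mu_{k-\gamma}$ must be split accordingly, producing a sum indexed by multi-partitions of $k$ that must be summed against the geometric decay of $g_\ell^n$ sharply enough to produce exactly the power $\theta_{\ell,k}^{k+1}$ in \eqref{Adrk}. This sharpness is essential, because any softer bound would destroy the super-exponential rate needed to sustain the KAM iteration in Section \ref{iteration}. Self-adjointness of $W_\ell$ is the easy last step: it follows immediately from the explicit matrix representation \eqref{sheq1}, the reality of $\F_\ell$, and the hermiticity of $V_\ell$.
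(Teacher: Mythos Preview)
Your proposal is correct and follows essentially the same route as the paper: identify $(W_\ell,N_\ell)$ via Proposition~\ref{WN}, expand $\K_{\F_\ell}=(1+g_\ell)^{-1}$ as a geometric series, absorb the small divisor $\la\om,q\ra^{-1}$ via the diophantine bound and exponential weight loss, and control the $\hbar$-derivatives of $g_\ell^n$ through Lemma~\ref{propN}. One small slip: since $\K_{\F_\ell}=1/(1+g_\ell)$, the geometric expansion is $\sum_n(-g_\ell)^n$ without the extra $1/\eta$ (that factor is already absorbed into your prefactor $-i/\la\om,q\ra$); and the ``main obstacle'' you flag---the total $\hbar$-derivative of $g_\ell^n$ through the chain rule in $\zeta=\la\om,q\ra\hbar$---is exactly what the paper isolates as a separate auxiliary lemma (Lemma~\ref{derivateg}) before summing.
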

\begin{proof} 

The proof of (2) is obvious and follows from the definition of the norms $\Vert\cdot\Vert_\rho$ and $\Vert\cdot\Vert_{\rho,k}$.
 The self-adjointess property $W=W^*$ is implied by the construction itself, which makes $W$ symmetric and bounded.
 
 Consider $\W_\ell$ as defined by (\ref{defW}).   Under the present assumptions, by Lemma \ref{propN} we have:
 \vskip 8pt\noindent
$$
\truc{\W}_\ell(\L_\om(\xi),q;\ep,\hbar):=\frac1{\la\om,q\ra}\frac{i\hbar\truc{\V}_\ell(\L_\om(\xi);q;\ep,\hbar)}{1+ g_\ell(\L_\om(\xi);\la\om,q\ra\hbar,\ep,\hbar)}, \quad q\neq 0; \quad \truc{\W}_\ell(\cdot,0;\hbar)=0.
$$  
\vskip 8pt\noindent
By the $\|\cdot\|_{\rho_\ell}$-convergence of the series (\ref{sgg}) we can write
\begin{eqnarray}
&&
\partial^\gamma_\hbar \truc{\W}_\ell(\L_\om(\xi),q;\ep,\hbar)=\sum_{n=0}^\infty\,(-\ep)^n\,\partial^\gamma_\hbar \truc{\W}_{\ell,n}(\L_\om(\xi),q;\ep,\hbar),
\\
&&
\truc{\W}_{\ell,n}(\L_\om(\xi),q;\ep,\hbar)=\frac1{\la\om,q\ra}\truc{\V}_\ell(\L_\om(\xi);q;\ep,\hbar)[g_\ell(\L_\om(\xi);\la\om,q\ra\hbar,\ep,\hbar)]^n
\\
\label{derivateWn}
&&
\partial^\gamma _\hbar\truc{\W}_{\ell,n}(\L_\om(\xi),q;\ep,\hbar)=
\\
\nonumber
&&\sum_{j=0}^\gamma\,\binom{\gamma}{j}\,\partial^{\gamma-j}_\hbar \truc{\V}_\ell(\L_\om(\xi);q;\ep,\hbar)D^j_\hbar [g_\ell(\L_\om(\xi);\la\om,q\ra\hbar,\ep,\hbar)]^n
\end{eqnarray}
\vskip 4pt\noindent
where $D_\hbar$ denotes the total derivative with respect to $\hbar$. We need the following preliminary result. 
\begin{lemma}
\label{derivateg} 
Let $\zeta(\hbar):=\la\om,q\ra\hbar$. 
Then: 
\begin{enumerate}
\item
\bea
\label{stimadghh}
|D^j_\hbar g_\ell(\L_\om(\xi),\zeta(\hbar),\ep,\hbar)|_{\rho_\ell}
\leq (j+1) ({2|q|})^j \theta_{\ell,j}(\N,\ep)^2
\eea
\item
\bea
\label{stimadgjn}
|D^j_\hbar [g_\ell(\L_\om(\xi);\zeta(\hbar),\ep,\hbar)]^n|_{\rho_\ell}\leq 2n^j (\theta_\ell(\N,\ep))^{n-j} [2(j+1)|q|]^j\theta_{\ell,j}(\N,\ep)^{2j}.
\eea
\end{enumerate}
\end{lemma}
\begin{proof}
The expression of total derivative $D_\hbar g$ is:
\be
\label{Dom}
D_\hbar g(\cdot;\la\om,q\ra\hbar,\ep,\hbar)=(\la\om,q\ra\ \frac{\partial}{\partial\zeta}+\frac{\partial}{\partial\hbar})\left.g_\ell(\cdot;\zeta,\ep,\hbar)\right|_{\zeta=\la\om,q\ra\hbar}
\ee
By Leibnitz's formula we then have:
\be
D^j_\hbar g_\ell(\cdot;\la\om,q\ra\hbar,\ep,\hbar)=\sum_{i=0}^j\,\binom{j}{i}\la\om,q\ra^{j-i}\frac{\partial^{j-i}g_\ell}{\partial\zeta^{j-i}}\frac{\partial^i g_\ell}{\partial\hbar^{i}}
\ee
Apply now  (\ref{simple}) with $k=0$, (\ref{stimadgu}) and (\ref{stimadgh}). We get: 
\vskip 5pt\noindent
\begin{eqnarray*}
\left\vert\frac{\partial^{j-i}g_\ell}{\partial\zeta^{j-i}}\frac{\partial^i g_\ell}{\partial\hbar^{i}}\right\vert_{\rho_\ell}\leq (j+1)2^j \theta_{\ell,j}(\N,\ep)^2
\end{eqnarray*} 
whence, since $|\om|\leq 1$:
\begin{eqnarray}
\label{stimaDjg}
\left\vert\frac{D^jg_\ell}{D\hbar^j}\right\vert_{\rho_\ell} \leq (j+1)(2)^j{|q|^j}\theta_{\ell,j}(\N,\ep)^2
\end{eqnarray}
This proves Assertion (1). To prove Assertion (2), let us first note that 
\be
D^j_\hbar [g_\ell(\L_\om(\xi);\la\om,q\ra\hbar,\ep,\hbar)]^n=P_{n,j}\left(g_\ell,\frac{Dg_\ell}{D\hbar},\ldots,\frac{D^jg_\ell}{D\hbar^j}\right).
\ee
\vskip 5pt\noindent
where $P_{n,j}(x_1,\ldots,x_j)$ is a homogeneous polynomial of degree $n$ with $n^j$ terms.  Explicitly: 
$$
P_{n,j}\left(g_\ell,\frac{Dg_\ell}{D\hbar},\ldots,\frac{D^jg_\ell}{D\hbar^j}\right)=\sum_{j=1}^n\,{g_\ell}^{n-j}\,\prod_{{k=1}\atop {j_1+\ldots+j_k=j}}^j \frac{D^{j_k}g_\ell}{D\hbar^{j_k}}.
$$
Now (\ref{stimadghh}), (\ref{stimaDjg}) and Proposition \ref{stimeMo} (3)  entail:
\begin{eqnarray*}
&&
|D^j_\hbar [g_\ell(\L_\om(\xi);\la\om,q\ra\hbar,\ep,\hbar)]^n|_{\rho_\ell}\leq  n^j|g|_{\rho_\ell}^{n-j} \prod_{{k=1}\atop {j_1+\ldots+j_k=j}}^j  2(j_k+1)\left({2|q|}\right)^{j_k}\theta_{\ell,j_k}(\N,\ep)^2
\\
&&
\leq 2n^j (\theta_\ell(\N,\ep))^{n-j} [2(j+1)|q|]^j\theta_{\ell,j}(\N,\ep)^{2j}.
\end{eqnarray*}
This concludes the proof of the Lemma.
\end{proof}
\noindent
To conclude the proof of the theorem, we must estimate the $\|\cdot\|_{\rho_{\ell+1},k}$ norm of the derivatives $\ds \partial^\gamma _\hbar\W_{\ell,n}(\L_\om(\xi),x;\ep,\hbar)$.  Obviously:
\be
\label{serieW}
\|\W_\ell(\xi,x;\ep,\hbar)\|_{\rho_\ell+1,k}\leq \sum_{n=0}^\infty\,\|\W_{\ell,n}(\xi,x;\ep,\hbar)\|_{\rho_{\ell+1,k}}.
\ee
\vskip 4pt\noindent
For $n=0$:
\begin{eqnarray*}
&&
\|\W_{\ell,0}(\xi,x;\ep,\hbar)\|_{\rho_{\ell+1,k}}\leq \gamma\sum_{\gamma=0}^k\int_{\R\times\R^l}|\partial^\gamma_\hbar\widehat{\W}_{\ell,0}(p,s;\cdot)||s|^{\tau}\mu_{k-\gamma}(p\om,s)\,e^{\rho_{\ell+1} (|p|+|s|)}\,d\lambda(p,s)
\\
&&
\leq 
\gamma\sum_{\gamma=0}^k\int_{\R\times\R^l}|\partial^\gamma_\hbar\widehat{\V}_{\ell,0}(p,s;\cdot)||s|^{\tau}\mu_{k-\gamma}(p\om,s)\,e^{\rho_{\ell+1} (|p|+|s|)}\,d\lambda(p,s)\leq \gamma\frac{\tau^\tau}{(ed_\ell)^\tau}\|\V_\ell\|{\rho_{\ell,k}}
\end{eqnarray*}
where the inequality follows again by the standard majorization 
\vskip 6pt\noindent
$$
e^{\rho_{\ell+1} (|p|+|s|)}=e^{\rho_{\ell} (|p|+|s|)}e^{-d_\ell(|p|+|s|)}, \quad \sup_{s\in\R^l}[|s|^\tau e^{-d_\ell |s|}]\leq \gamma\frac{\tau^\tau}{(ed_\ell)^\tau}
$$
\vskip 4pt\noindent
on account of the small denominator estimate (\ref{DC}).  For $n>0$ we can write, on account of (\ref{pm1},\ref{pm2}):
\begin{eqnarray*}
&&
\|\W_{\ell,n}(\xi,x;\cdot)\|_{\rho_{\ell+1},k}=\sum_{\gamma=0}^k\int_{\R\times\R^l}|\partial^\gamma_\hbar\widehat{\W}_{\ell,n}(p,s;\cdot)||s|^{\tau}\mu_{k-\gamma}(p\om,s)\,e^{\rho_{\ell+1} (|p|+|s|)}\,d\lambda(p,s)\leq
\\
&&
\leq  \gamma\frac{\tau^\tau}{(ed_\ell)^\tau}\sum_{\gamma=0}^k\sum_{j=0}^\gamma \,\binom{\gamma}{j}\,\int_{\R^l}{\mathcal Q}(s,\cdot)e^{\rho_\ell |s|}\,d\nu(s)
\end{eqnarray*}
where
\begin{eqnarray*}
{\mathcal Q}(s,\cdot):=\int_\R|[\partial^{\gamma-j}_\hbar \widehat{\V}_{\ell}(p;s;\cdot)]\ast [D^j_\hbar \widehat{g}^{\,\ast_n}_\ell(p;\la\om,s\ra\hbar,\cdot)] \mu_{k-\gamma}(p\om,s)\,e^{\rho_\ell |p|}\,dp
\end{eqnarray*}
Here $\ast$ denotes convolution with respect only to the $p$ variable, and $\widehat{g}^{\,\ast_i n}_\ell(p,\zeta,\cdot)$ denotes the $n-$th convolution of $\widehat{g}_\ell$ with itself, i.e. the $p$-Fourier transform of $g^n_\ell$.  Now, by Assertion (3) of Proposition (\ref{stimeMo}) and the above Lemma:
\begin{eqnarray*}
&&
\int_{\R^l}{\mathcal Q}(s,\cdot)e^{\rho_\ell |s|}\,d\nu(s)=
\\
&&
=\int_{\R\times\R^l}|[\partial^{\gamma-j}_\hbar \widehat{\V}_{\ell}(p;s;\cdot)]\ast_\xi [D^j_\hbar g^{\ast_\xi n}_\ell(p;\la\om,s\ra\hbar,\cdot)] \mu_{k-\gamma}(p\om,s)\,e^{\rho_\ell(|p|+|s|)}\,d\lambda(p,s)
\\
&&
\leq \int_{\R^l}\left[\int_{\R}|[\partial^{\gamma-j}_\hbar \widehat{\V}_{\ell}(p;s;\hbar)]\ast [D^j_\hbar \widehat{g}^{\,\ast_ n}(p;\la\om,s\ra\hbar,\cdot)]|\mu_{k-\gamma}(p\om,s)\,e^{\rho_\ell |p|}\,dp\right]e^{\rho_\ell |s|} \,d\nu(s)
\\
&&
\leq 2A(j)^j\theta_\ell(\N,\ep)^{n-j}\int_{\R^l}\int_{\R}|\partial^{\gamma-j}_\hbar \widehat{\V}_{\ell}(p;s;\cdot)|\mu_{k-\gamma}(p\om,s)\,e^{\rho_\ell |p|}|s|^{j}e^{\rho_\ell |s|} \,\,dp d\nu(s),
 \end{eqnarray*}
 with
\[
 A(j):= 2n  (j+1) \theta_{\ell,j}(\N,\ep)^{2}.
\]
This yields,   with $\delta_\ell$ defined by (\ref{ddll}):
\begin{eqnarray*}
&&
\|\W_{\ell,n}(\xi,x;\cdot)\|_{\rho_\ell+1,k}\leq \gamma\frac{\tau^\tau}{(ed_\ell)^\tau}\sum_{\gamma=0}^k\int_{\R\times\R^l}|\partial^\gamma_\hbar\widehat{\W}_{\ell,n}(p,s;\cdot)\mu_{k-\gamma}(p\om,s)\,e^{\rho_\ell(|p|+|s|)}\,d\lambda(p,s)\leq
\\
&&
\leq \frac{\gamma \tau^\tau (k+1)(2A(k))^k}{(ed_\ell)^\tau}\theta_\ell(\N,\ep)^{n-j}\sum_{\gamma=0}^k\int_{\R\times \R^l} |\partial^{\gamma}_\hbar \widehat{\V}_{\ell}(p;s;\cdot)|\cdot \mu_{k-\gamma}(p\om,s)\,e^{\rho_\ell |p|}|s|^{j}e^{\rho_\ell |s|} \,\,d\lambda(p,s)
\\
&&
\leq \frac{\gamma \tau^\tau (k+1)(2A(k))^k}{(ed_\ell)^\tau}\frac{k^{k}}{(e\delta_\ell)^{k}}\theta_\ell(\N,\ep)^{n-j}\sum_{\gamma=0}^k\int_{\R^l}\int_{\R}|\partial^{\gamma}_\hbar \widehat{\V}_{\ell}(p;s;\cdot)| \mu_{k-\gamma}(p\om,s)e^{\rho |p|}e^{\rho |s|}\,d\lambda(p,s)
\\
&&
\leq  \gamma\frac{\tau^\tau}{(ed_\ell)^\tau}\frac{(k+1)k^{k}}{(e\delta_\ell)^{k}} 2(2n)^k(\theta_\ell(\N,\ep))^{n-j}(k+1)^k\theta_{\ell,k}^{2k} \|\V_\ell\|_{\rho,k}.
\end{eqnarray*}
\vskip 4pt\noindent 
Therefore, by (\ref{serieW}):
\begin{eqnarray*}
&&
\|{\W}_\ell(\xi;x;\ep,\hbar)\|_{\rho_{\ell+1},k} \leq \sum_{n=0}^\infty\,{\W}_{\ell,n}
(\xi;x;\ep,\hbar)\|_{\rho_{\ell+1},k} \leq
\\
&&
\leq
\gamma \frac{\tau^\tau}{(ed_\ell)^\tau}\|\V_\ell\|_{\rho_\ell,k}\left[1+\frac{2^{k+1}(k+1)^{k+1}k^k}{(e\delta_\ell)^{k}} \theta_{\ell,k}^{2k}\sum_{n=1}^\infty\, n^k (\theta_\ell(\N,\ep))^{n-j}\right]
\\
&&
\leq
\gamma \frac{\tau^\tau}{(ed_\ell)^\tau}\|\V_\ell\|_{\rho_\ell,k}\left[1+\frac{2^{k+1}(k+1)^{k+1}k^k}{(e\delta_\ell)^{k}} \theta_{\ell,k}^{2k-j}\sum_{n=1}^\infty\, n^k (\theta_\ell(\N,\ep))^{n}\right]
\\
&&
\leq\gamma \frac{\tau^\tau}{(ed_\ell)^\tau}\|\V_\ell\|_{\rho_\ell,k}\left[1+\frac{2^{k+1}(k+1)^{2(k+1)}k^k}{(e\delta_\ell)^{k}[(1- \theta_\ell(\N,\ep)^{k+1}]} \theta_{\ell,k}^{k+1}\right].
\end{eqnarray*}
\vskip 4pt\noindent
because $j\leq k$, and
\begin{eqnarray*}
&&
\sum_{n=1}^\infty \,n^kx^n\leq \sum_{n=1}^\infty\,(n+1)\cdots (n+k) x^n=\frac{d^k}{dx^k}\,\sum_{n=1}^\infty x^{n+k}
\\
&&
=\frac{d^k}{dx^k}\frac{x^{k+1}}{1-x}=(k+1)!\sum_{j=0}^{k+1}\left(k+1-j\atop j\right)
 \frac{x^{k+1-j}}{(1-x)^j}\leq 
\frac{2^{k+1}(k+1)!}{(1-x)^{k+1}}.
\end{eqnarray*}
By the Stirling formula this estimate  concludes the proof of the Theorem.  
\end{proof}
\vskip 2pt\noindent

 \subsection{Towards KAM iteration}\label{towkam}
 
 Let us now prove the estimate which represents the starting point of the KAM iteration:
 \begin{theorem}
 \label{resto}
 Let $\F_\ell$ and $V_\ell$ be as in Theorem \ref{homeq}, and let $W_\ell$ be the solution of the homological equation (\ref{heq}) as constructed and estimated in Theorem \ref{homo}.  Let (\ref{epbar}) hold and let furthermore 
 \be
 \label{condepell}
  |\ep|<\overline{\ep}_\ell, \quad \overline{\ep}_\ell:=\left(\frac{d_\ell}{\|\W_\ell\|_{\rho_{\ell+1},k}}\right)^{2^{-\ell}}.
\ee
 Then we have:
 \be
 \label{resto1}
 e^{i\ep_\ell W_\ell/\hbar}(\F_\ell(L_\om)+\ep_\ell V_\ell)e^{-i\ep_\ell W_\ell/\hbar}=(\F_\ell+\ep_\ell  N_\ell)(L_\om)+\ep_\ell^2V_{\ell+1,\ep}
 \ee
 where, $\forall\,0<2d_\ell<\rho_\ell$ and $k=0,1,\ldots$:
 \bea
 &&
 \label{resto2}
 \|V_{\ell+1,\ep}\|_{\rho_\ell-2d_\ell,k}\leq 
 C(\ell,k,\ep) \frac{\|\V_\ell\|^2_{\rho_\ell,k}} {1-{|\ep_\ell |}(k+1)4^k A(\ell,k,\ep)
\|\V\|_{\rho_\ell,k}/{(ed_\ell)^2}}
 \\
 &&
 \nonumber
 {}
 \\
 \label{Cdrk}
 &&
 C(\ell,k,\ep):=\frac{(k+1) 4^{k}}{(ed_\ell)^2}{A(\ell,k,\ep)}\left[2+|\ep_{\ell} |\frac{(k+1) 4^{k}}{(ed_\ell)^2 }{A(\ell,k.\ep)\|\V_\ell\|_{\rho_\ell,k}}{}  \right]
 \eea
 \vskip 6pt\noindent
Here $A(\ell,k,\ep)$ is defined by (\ref{Adrk}).
 \end{theorem}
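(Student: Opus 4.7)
The plan is to expand the conjugation as a norm-convergent commutator series and cancel the $O(\ep_\ell)$ terms using the homological equation \eqref{heqell}. Set
\[
S_\ell:=e^{i\ep_\ell W_\ell/\hbar}(\F_\ell(L_\om)+\ep_\ell V_\ell)e^{-i\ep_\ell W_\ell/\hbar}
=\sum_{m=0}^\infty \frac{(i\ep_\ell)^m}{m!\,\hbar^m}\,\mathrm{ad}_{W_\ell}^m\bigl(\F_\ell(L_\om)+\ep_\ell V_\ell\bigr),
\]
which converges in $\mathcal B(L^2)$ by Proposition \ref{stimeMo} together with \eqref{condepell}. The $m=0$ contribution is $\F_\ell(L_\om)+\ep_\ell V_\ell$, and the homological equation rewrites $[\F_\ell(L_\om),W_\ell]=i\hbar(N_\ell(L_\om)-V_\ell)$, so the $m=1$ term acting on $\F_\ell(L_\om)$ equals $\ep_\ell(N_\ell(L_\om)-V_\ell)$. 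This cancels the $\ep_\ell V_\ell$ coming from $m=0$ and leaves the normal-form contribution $\ep_\ell N_\ell(L_\om)$.

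Next I would regroup the remaining contributions, namely $m\geq 2$ on $\F_\ell(L_\om)$ together with $m\geq 1$ on $\ep_\ell V_\ell$, as $\ep_\ell^2 V_{\ell+1,\ep}$. Using $[W_\ell,\F_\ell(L_\om)]=i\hbar(V_\ell-N_\ell)$ once more inside each $m$-fold commutator of the first group reduces it to an $(m-1)$-fold commutator on $V_\ell-N_\ell$ with an extra $\hbar$ that kills one denominator. Relabeling $n=m-1$ and factoring $\ep_\ell^2$ produces
\[
V_{\ell+1,\ep}=\sum_{n=1}^\infty \frac{(i\ep_\ell)^{n-1}}{(n+1)!\,\hbar^n}\Bigl[n\,\mathrm{ad}_{W_\ell}^n V_\ell+\mathrm{ad}_{W_\ell}^n N_\ell(L_\om)\Bigr],
\]
whose Weyl symbol is an absolutely convergent series of iterated Moyal brackets of $\W_\ell$ with $\V_\ell$ and $\N_\ell$.

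To obtain the stated estimate I would apply Corollary \ref{multipleM} with $\W_\ell\in\J_k(\rho_{\ell+1})=\J_k(\rho_\ell-d_\ell)$ and radial loss $d_\ell$, producing symbols in $\J_k(\rho_\ell-2d_\ell)$; inserting \eqref{Thm5.1} and the elementary bound $\|\N_\ell\|_{\rho_\ell,k}\leq\|\V_\ell\|_{\rho_\ell,k}$ from Theorem \ref{homo}, and writing $\alpha:=\frac{(k+1)4^k}{(ed_\ell)^2}A(\ell,k,\ep)\|\V_\ell\|_{\rho_\ell,k}$, one obtains for every $n\geq 1$
\[
\frac{1}{n!}\bigl\|\{\W_\ell,\{\W_\ell,\ldots,\{\W_\ell,\V_\ell\}_M\ldots\}_M\bigr\|_{\rho_\ell-2d_\ell,k}\leq \alpha^n\,\|\V_\ell\|_{\rho_\ell,k},
\]
and likewise with $\N_\ell$ in place of $\V_\ell$. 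Isolating the $n=1$ term, which carries the leading $\|\V_\ell\|_{\rho_\ell,k}^2$ factor, and summing the $n\geq 2$ tail as a geometric series in $|\ep_\ell|\alpha$, yields exactly \eqref{resto2}; the geometric series converges by \eqref{condepell}, which through \eqref{Thm5.1} ensures $|\ep_\ell|\alpha<1$.

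I expect the main technical hurdle to be the clean bookkeeping of combinatorial coefficients, so that the two contributions at $n=1$ combine into the factor $2$ in \eqref{Cdrk}, while the residual geometric tail reassembles into the additional $|\ep_\ell|\frac{(k+1)4^k}{(ed_\ell)^2}A(\ell,k,\ep)\|\V_\ell\|_{\rho_\ell,k}$ summand inside the bracket. All the analytic content is already encoded in Corollary \ref{multipleM} and Theorem \ref{homo}; what remains is an exercise in grouping.
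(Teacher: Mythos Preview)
Your approach is correct and essentially complete, but it takes a different route from the paper's. Rather than expanding the full commutator series and regrouping by commutator degree, the paper computes the second-order Taylor expansion of $t\mapsto e^{itW_\ell/\hbar}(\F_\ell(L_\om)+\ep_\ell V_\ell)e^{-itW_\ell/\hbar}$ at $t=0$ with integral remainder. The second derivative factors as $e^{itW_\ell/\hbar}R_{\ell+1,t}\,e^{-itW_\ell/\hbar}$ with
\[
R_{\ell+1,t}=\frac{[N_\ell,W_\ell]}{i\hbar}+\frac{[V_\ell,W_\ell]}{i\hbar}+t\,\frac{[W_\ell,[W_\ell,V_\ell]]}{(i\hbar)^2},
\]
only three commutators to estimate directly via Proposition~\ref{stimeMo}. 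The outer conjugation is then absorbed by a separate auxiliary result (Lemma~\ref{RResto4}), which supplies the factor $(1-|\ep_\ell|\alpha)^{-1}$ by summing the conjugation series on an operator already controlled in $\J_k(\rho_\ell-d_\ell)$. This modular decomposition is precisely what produces the factor $2+|\ep_\ell|\alpha$ in $C(\ell,k,\ep)$: two single brackets plus one double bracket, each wrapped in the conjugation estimate.

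Your direct summation in fact yields the sharper bound $\alpha\|\V_\ell\|_{\rho_\ell,k}(1-|\ep_\ell|\alpha)^{-1}$ with $\alpha=\frac{(k+1)4^k}{(ed_\ell)^2}A(\ell,k,\ep)\|\V_\ell\|_{\rho_\ell,k}$, which is at most half of \eqref{resto2}; so the theorem follows a fortiori, and your claim that the combinatorics reproduce \eqref{resto2} ``exactly'' is slightly off but harmless. Two minor slips: your displayed series for $V_{\ell+1,\ep}$ is off by a global factor of $i$ (the prefactor should be $i^{n}\ep_\ell^{\,n-1}/((n+1)!\,\hbar^{n})$, not $(i\ep_\ell)^{n-1}/((n+1)!\,\hbar^{n})$); and condition \eqref{condepell} alone only gives $|\ep_\ell|\|\W_\ell\|_{\rho_{\ell+1},k}<d_\ell$, which is not literally $|\ep_\ell|\alpha<1$ when $d_\ell$ is small---the paper has the same looseness here and resolves it later through Proposition~\ref{estl}.
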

 \begin{remark} We will verify in the next section (Remark  \ref{verifica} below) that (\ref{condepell}) is actually fulfilled for $|\ep|<1/|\V|_\rho$.
 \end{remark}
 \begin{proof} To prove the theorem we need an auxiliary  result, namely:
 \begin{lemma}
 \label{RResto4}
 For $\ell=0,1,\ldots$ let $\rho_\ell>0, \rho_0:=\rho$, $A\in J_k(\rho)$, $W_\ell\in J_k(\rho_\ell)$, $k=0,1,\ldots$.  Let $W_\ell^\ast=W_\ell$, and define:
 \be
 \label{resto5}
 A_{\ep}(\hbar):=e^{i\ep_\ell W_\ell/\hbar}Ae^{-i\ep_\ell W/\hbar}.
 \ee
 Then, for $\ds |\ep_\ell|< [e d^2_\ell/((k+1)4^k\|\W\|_{\rho_{\ell+1},k})]^{2^{-\ell}}$, and  $\forall\,0<d_\ell<\rho_\ell$, $k=0,1,\ldots$:
 \vskip 8pt\noindent
 \be
 \label{resto6}
 \|A_{\ep}(\hbar)\|_{\rho_\ell-d_\ell,k}\leq
\frac{\|{\mathcal A}\|_{\rho_\ell,k}}{1-|\ep_\ell | (k+1)4^k \|\W\|_{\rho_{\ell+1},k}/(ed^2_\ell)}
 \ee
  \vskip 4pt\noindent
 \end{lemma}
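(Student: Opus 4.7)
The strategy is standard: expand $A_\ep(\hbar)$ as a Dyson/commutator series, translate to symbols, and bound term-by-term via the iterated Moyal-bracket estimate already established in Section~3. Because $W_\ell=W_\ell^*$ is continuous on $L^2(\T^l)$ by Corollary~\ref{corA}, the unitary group $e^{i\ep_\ell W_\ell/\hbar}$ preserves $H^1(\T^l)$ (the domain of $A$), so on that dense invariant domain the usual identity
\begin{equation*}
A_\ep(\hbar)=\sum_{m=0}^{\infty}\frac{(i\ep_\ell)^{m}}{\hbar^{m}\,m!}\,[W_\ell,[W_\ell,\ldots,[W_\ell,A]\ldots]]
\end{equation*}
holds term-by-term; the convergence of the series in $\mathcal B(L^2)$, and hence the validity of \eqref{resto6}, will come a posteriori from the uniform bound we are about to prove.

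Next, pass to Weyl symbols. By Proposition~\ref{Quant2} the symbol of $[W_\ell,\,\cdot\,]/(i\hbar)$ is the Moyal bracket $\{\W_\ell,\,\cdot\,\}_M$, so the symbol of the $m$-th summand is $\ep_\ell^{m}/m!$ times the $m$-fold iterated Moyal bracket $\{\W_\ell,\{\W_\ell,\ldots,\{\W_\ell,\A\}_M\cdots\}_M$. Corollary~\ref{multipleM}, applied with total regularity loss $d=d_\ell$ (so that the output lives in $\J_k(\rho_\ell-d_\ell)=\J_k(\rho_{\ell+1})$), gives directly
\begin{equation*}
\frac{1}{m!}\bigl\|\underbrace{\{\W_\ell,\{\W_\ell,\ldots,\{\W_\ell}_{m\text{ times}},\A\}_M\}_M\cdots\}_M\bigr\|_{\rho_\ell-d_\ell,\,k}\le \left(\frac{(k+1)4^{k}}{e\,d_\ell^{2}}\right)^{\!m}\|\W_\ell\|_{\rho_{\ell+1},k}^{\,m}\,\|\A\|_{\rho_\ell,k}.
\end{equation*}

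Summing over $m$ produces a geometric series of ratio
\begin{equation*}
r_\ell:=\frac{|\ep_\ell|\,(k+1)4^{k}\,\|\W_\ell\|_{\rho_{\ell+1},k}}{e\,d_\ell^{2}},
\end{equation*}
which is strictly less than $1$ precisely under hypothesis \eqref{condepell}. Collapsing the series yields
\begin{equation*}
\|A_\ep(\hbar)\|_{\rho_\ell-d_\ell,\,k}\le \|\A\|_{\rho_\ell,k}\sum_{m=0}^{\infty}r_\ell^{m}=\frac{\|\A\|_{\rho_\ell,k}}{1-|\ep_\ell|(k+1)4^{k}\|\W_\ell\|_{\rho_{\ell+1},k}/(e\,d_\ell^{2})},
\end{equation*}
which is \eqref{resto6}. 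The absolute convergence of the symbol series in the Banach-algebra norm $\|\cdot\|_{\rho_{\ell+1},k}$ then, via \eqref{stimg}, upgrades the formal commutator expansion to genuine convergence in $\mathcal B(L^2)$, closing the argument.

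The substance of the proof lies entirely in Corollary~\ref{multipleM}: there the total loss $d_\ell$ is distributed in equal fractions $d_\ell/r$ among the $r$ nested brackets, and a Stirling estimate shows that the factorial denominators supplied by the exponential series exactly cancel the $r!^{2}$ factor produced by the iterated bookkeeping. Given that machinery, the present lemma is a two-line geometric-series computation; the only ancillary issue is the initial domain justification of the Dyson expansion, handled by the self-adjointness of $W_\ell$ and the Calder\'on–Vaillancourt bound \eqref{stimg}.
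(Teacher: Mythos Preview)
Your proof is correct and follows essentially the same route as the paper: write the commutator (Dyson) expansion for $A_\ep(\hbar)$, pass to symbols via the Moyal bracket, apply Corollary~\ref{multipleM} term-by-term with loss $d_\ell$, and sum the resulting geometric series. The only differences are cosmetic---you add a word on domain invariance and on the a~posteriori upgrade of convergence, while the paper simply invokes boundedness of $W_\ell$ and $A$ to justify the expansion for small $\ep$.
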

 \begin{proof}
Since the operators $W_\ell$ and $A$ are bounded, there is ${\ep}_0>0$ such that the commutator expansion for $A_{\ep}(\hbar)$:
\vskip 4pt\noindent
$$
A_{\ep}(\hbar)=\sum_{m=0}^\infty \frac{(i\ep_\ell)^m}{ \hbar^m m!}[W_\ell,[W_\ell,\ldots,[W_\ell,A]\ldots]
$$
\vskip 4pt\noindent
is norm convergent  for $|\ep|<\ep_0$   if $\hbar\in]0,1[$ is fixed. The corresponding expansion for the symbols is
\vskip 4pt\noindent
$$
\A_{\ep}(\hbar)=\sum_{m=0}^\infty \frac{(\ep_\ell)^m}{m!}\{\W_\ell,\{\W,\ldots,\{\W_\ell,{\mathcal A}\}_M\ldots\}_M
$$
\vskip 4pt\noindent
Now we can apply  once again Corollary \ref{multipleM}. We get:
\vskip 5pt\noindent
\be
\frac{1}{m!}\|\{\W_\ell,\{\W_\ell,\ldots,\{\W_\ell,{\mathcal A}\}_M\ldots\}_M\|_{\rho_\ell-d_\ell,k}
\leq  
\left(\frac{(k+1)4^{k}\|\W_\ell\|_{\rho_{\ell+1},k}}{ed^2_\ell}\right)^m
\|{\mathcal A}\|_{\rho_\ell,k}
\ee
\vskip 5pt
\noindent
Therefore:
\vskip 4pt\noindent
\begin{eqnarray*}
\|A_{\ep}(\hbar)\|_{\rho_\ell-d_\ell,k} &\leq & \|{\mathcal A}\|_{\rho_\ell,k}\sum_{m=0}^\infty |\ep_\ell|^m [(k+1)4^{k}\|\W\|_{\rho_{\ell+1},k}/(ed^2_\ell)]^m
\\
&=&
\frac{\|{\mathcal A}\|_{\rho_\ell,k}}{1-|\ep_\ell | (k+1)4^k \|\W\|_{\rho_{\ell+1},k}/(ed^2_\ell)}
\end{eqnarray*}
\vskip 4pt\noindent
and this concludes the proof. 
 \end{proof}
   $W_\ell$ solves  the homological equation (\ref{heq}).  Then by Theorem \ref{homo}  $W_\ell=W_\ell^\ast\in J_k(\rho_\ell-d_\ell)$, $k=0,1,\ldots$; in turn, by  Assertion (3) of Corollary \ref{corA} the unitary operator $\ds e^{i\ep_\ell W_\ell/\hbar}$ leaves $H^1(\T^l)$ invariant.  Therefore the unitary image of $H_\ep$ under $\ds e^{i\ep_\ell W/\hbar}$ is the real-holomorphic operator family in $L^2(\T^l)$ 
\be
\label{S}
\ep_\ell\mapsto S_{\ep_\ell}:=e^{i\ep_\ell W_\ell/\hbar}(\F_\ell(L_\om)+\ep_\ell V_\ell)e^{-i\ep_\ell W/\hbar}, \quad D(S(\ep_\ell))=H^1(\T^l)
\ee
Computing its Taylor expansion  at $\ep_\ell=0$  with second order  remainder we obtain:
\begin{eqnarray}\label{lemmm}
&&
S_{\ep_\ell}u=\F_\ell(L_\om)u+\ep_\ell N_\ell(L_\om)u+ \ep_\ell^2 V_{\ell+1,\ep}u, \quad u\in H^1(\T^l)
\\
\nonumber
&&
{}
\\
&&  
V_{\ell+1,\ep_\ell}=\frac12\int_0^{\ep_\ell} (\ep_\ell -t)e^{i t W_\ell/\hbar}\left(\frac{[N_\ell,W_\ell]}{i\hbar}+\frac{[W_\ell,V_\ell]}{i\hbar}+t \frac{[W_\ell,[W_\ell,V_\ell]]}{(i\hbar)^2}\right)e^{-itW_\ell/\hbar}\,dt
\end{eqnarray}
\vskip 5pt\noindent
To see this, first remark that $S_0=\F(L_\om)$. Next, we compute, as equalities between continuous operators in $L^2(\T^l)$:
\begin{eqnarray*}
&& 
S^\prime_{\ep_\ell}=e^{i\ep_\ell W/\hbar}([\F_\ell(L_\om),W_\ell]/i\hbar +V_\ell+\ep_\ell [V,W]/i\hbar)e^{-i\ep_\ell W/\hbar}=
\\
&&
e^{i\ep_\ell W/\hbar}(N_\ell+\ep_\ell [V_\ell,W_\ell]/i\hbar)e^{i\ep_\ell W_\ell/\hbar}; \qquad
S^\prime_0= N_\ell
\\
&&
S^{\prime\prime}_{\ep_\ell}=e^{i\ep_\ell W_\ell/\hbar}([N_\ell,W_\ell]/i\hbar +  [V_\ell,W_\ell]/i\hbar +\ep_\ell [W_\ell,[W_\ell,V_\ell]]/(i\hbar)^2)e^{-i\ep_\ell W_\ell/\hbar},
\end{eqnarray*}
and this proves (\ref{lemmm}) by the second order Taylor's  formula with remainder:
$$
S_{\ep_\ell}=S(0)+\ep_\ell S^\prime_0+\frac12\int_0^{\ep_\ell} (\ep_\ell-t)S^{\prime\prime}(t),dt
$$
The above formulae obviously yield
\be
\label{stimar2}
\| {V}_{l+1,\ep_\ell}\|\leq |\ep_\ell |^2 \max_{0\leq |t|\leq |\ep_\ell |}\|S^{\prime\prime}(t)\|
\ee
Set now:
\be
\label{R1}
R_{\ell+1,\ep_\ell}:=[N_\ell,W_\ell]/i\hbar +  [V_\ell,W_\ell]/i\hbar +\ep_\ell [W_\ell,[W_\ell,V_\ell]]/(i\hbar)^2
\ee
%By the above Lemma, 
$R_{\ell+1,\ep_\ell}$ is a continuous operator in $L^2$, corresponding to the symbol
\be
\label{simbR1}
{\mathcal R}_{\ell+1,\ep_\ell}(\L_\om(\xi),x;\hbar)=\{\N_\ell,\W_\ell\}_M+\{\V_\ell,\W_\ell\}_M+\ep_\ell\{\W_\ell,\{\W_\ell,\V_\ell\}_M\}_M
\ee
 Let us estimate  the three terms individually.  
  By Theorems \ref{homo} and \ref{stimeMo}  we can write, with $A(\ell,k,\ep)$  given by (\ref{Adrk}):
  \begin{eqnarray*}
 &&
 \|[N_\ell,W_\ell]/i\hbar\|_{\rho_\ell-d_\ell,k}\leq \|\{\N_\ell,\W_\ell\}_M\|_{\rho_\ell-d_\ell,k}\leq 
 \frac{(k+1)4^k}{(ed_\ell)^2}\|\W_\ell\|_{\rho_{\ell+1},k}\|\N_\ell\|_{\rho_\ell,k}
 \\
 &&
 \leq  
 \frac{(k+1)4^k}{(ed_\ell)^2} A(\ell,k,\ep)\|\V_\ell\|^2_{\rho_\ell,k}
 \\
 &&
 \|[V_\ell,W_\ell]/i\hbar\|_{\rho_\ell-d_\ell,k}\leq\|\{\V_\ell,\W_\ell\}_M\|_{\rho_\ell-d_\ell,k}\leq 
\frac{(k+1)4^k}{(ed_\ell)^2}\|\V_\ell\|_{\rho_\ell,k}\|\W_\ell\|_{\rho_{\ell+1},k}\leq \\
 &&
 \leq  \frac{(k+1)4^k}{(ed_\ell)^2}A(\ell,k,\ep)\|\V_\ell\|^2_{\rho_\ell,k}
 \\
 &&
 \|[W_\ell,[W_\ell,V_\ell]]/(i\hbar)^2\|_{\rho_\ell-d_\ell,k}\leq \|\{\W_\ell,\{\W_\ell,\V_\ell\}_M\}_M\|_{\rho_\ell-d_\ell,k}\leq \frac{(k+1)^2 4^{2k}}{(ed_\ell)^4} \|\W_\ell\|_{\rho_{\ell+1},k}^2 \|\V_\ell\|_{\rho_\ell,k}
 \\
 &&
 \leq  \frac{(k+1)^2 4^{2k}}{(ed_\ell)^4}A(\ell,k,\ep)^2\|\V_\ell\|_{\rho_\ell,k}^3
  \end{eqnarray*}
  \vskip 6pt\noindent
 We can now apply Lemma \ref{RResto4}, which yields:
 \vskip 2pt\noindent
 \begin{eqnarray*}
 &&
 \|e^{i\ep_\ell W_\ell/\hbar}[N_\ell,W_\ell] e^{-i\ep_\ell W_\ell/\hbar}/i\hbar\|_{\rho_\ell-d_\ell-d^\prime_\ell,k}\leq  \frac{(k+1) 4^{k}}{(ed_\ell)^2}\Xi(\ell,k)
\\
 &&
 \|e^{i\ep_\ell W_\ell/\hbar}[V_\ell,W_\ell] e^{-i\ep_\ell W_\ell/\hbar}/i\hbar\|_{\rho_\ell-d_\ell-d^\prime_\ell,k}\leq  \frac{(k+1) 4^{k}}{(ed_\ell)^2 }\Xi(\ell,k)
 \\
 &&
 \|e^{i\ep_\ell W_\ell/\hbar}[W_\ell,[W_\ell,V_\ell]] e^{-i\ep_\ell W_\ell/\hbar}/(i\hbar)^2\|_{\rho_\ell-d_\ell-d^\prime_\ell,k}\leq  \frac{(k+1)^2 4^{2k}}{(ed_\ell)^4 }\Xi_1(\ell,k)
 \end{eqnarray*}
 where
 \bea
 &&
 \label{Xi}
 \Xi(\ell,k):= A(\ell,k,\ep)\cdot\frac{\|\V_\ell\|^2_{\rho_\ell,k}} {1-|\ep_\ell (k+1)4^k |\|\W\|_{\rho_{\ell+1},k}/(ed^2_\ell)}
 \\
 &&
 \label{Xi1}
 \Xi_1(\ell,k)=A(\ell,k,\ep)^2\cdot \frac{\|\V\|^3_{\rho_\ell,k}} {1-|\ep_\ell (k+1)4^k |\|\W\|_{\rho_{\ell+1},k}/(ed^2_\ell)}
\eea
 \vskip 6pt\noindent
 Therefore,  summing the three inequalities we get
 \vskip 3pt\noindent
 \begin{eqnarray*}
 &&
 \|V_{\ell+1,\ep}\|_{\rho_\ell-d_\ell-d^\prime_\ell,k} \leq  
 \\
 &&
 \frac{(k+1) 4^{k}}{(ed_\ell)^2 }A(\ell,k,\ep)
 \frac{\|\V_\ell\|^2_{\rho_\ell,k}} {1-|\ep_\ell | (k+1)4^k \|\W_\ell\|_{\rho_{\ell+1},k}/(ed^2_\ell)}\left[2+|\ep_\ell|\frac{(k+1) 4^{k}}{(ed_\ell)^2 }A(\ell,k,\ep){\|\V_\ell\|_{\rho_\ell,k}} \right]
 \end{eqnarray*}
 \vskip 8pt\noindent
 If we choose $d^\prime_\ell=d_\ell$ this is (\ref{resto2}) on account of Theorem \ref{homo}. 
 This concludes the proof of Theorem \ref{resto}. 
 \end{proof}

\vskip 1cm\noindent
%%%%%%%%%%%%%%%%%%%
%%%%%%%%%%%%%%%%%%%%%
\section{Recursive estimates}\label{recesti}
\renewcommand{\thetheorem}{\thesection.\arabic{theorem}}
\renewcommand{\theproposition}{\thesection.\arabic{proposition}}
\renewcommand{\thelemma}{\thesection.\arabic{lemma}}
\renewcommand{\thedefinition}{\thesection.\arabic{definition}}
\renewcommand{\thecorollary}{\thesection.\arabic{corollary}}
\renewcommand{\theequation}{\thesection.\arabic{equation}}
\renewcommand{\theremark}{\thesection.\arabic{remark}}
\def\P{{\mathcal P}}
\setcounter{equation}{0}%
\setcounter{theorem}{0}% 
Consider the $\ell$-th step of the KAM iteration.  Summing up the results of the preceding Section we can write: 
\begin{eqnarray*}
&& 
\bullet\  S_{\ell,\ep}:=e^{i\ep_\ell W_\ell/\hbar}\cdots e^{i\ep_2 W_1/\hbar}e^{i\ep W_0/\hbar}(\F(L_\om)+\ep V)e^{-i\ep W_0/\hbar}e^{-i\ep_2 W_1/\hbar}\cdots e^{-i\ep_\ell W_\ell/\hbar}
\\
&& 
= e^{i\ep_\ell W_\ell/\hbar}(\F_{\ell,\ep}(L_\om)+\ep^{2^\ell} V_{\ell,\ep})e^{-i\ep_\ell W_\ell/\hbar}
=\F_{\ell+1,\ep}(L_\om)+\ep_{\ell +1} V_{\ell +1,\ep},  
\\
&&
\bullet\  \F_{\ell,\ep}(L_\om)=\F(L_\om)+\sum_{k=1}^{\ell-1} \ep_kN_k(L_\om), \quad [\F_{\ell}(L_\om),W_\ell]/i\hbar +V_{\ell,\ep} =N_\ell(L_\om,\ep)
\\
&&
\bullet 
V_{\ell+1,\ep}=\frac12\int_0^{\ep_\ell} (\ep_\ell -t)e^{i t W_\ell/\hbar}R_{\ell+1,t}e^{-itW_\ell/\hbar}\,dt
\\
&&
\bullet\  R_{\ell+1,\ep}:=[N_{\ell},W_{\ell}]/\hbar+[W_{\ell},V_{\ell,\ep}]/\hbar+\ep_{\ell} [W_{\ell},[W_{\ell},V_{\ell,\ep}]]/\hbar^2
\end{eqnarray*}
 \vskip 6pt\noindent
We now proceed to obtain recursive estimates for the above quantities in the $\|\cdot\|_{\rho_\ell,k}$ norm. Consider (\ref{resto2}) and denote: 
 \vskip 6pt\noindent
\bea
&&
\label{stimaPsi}
\Psi(\ell,k)=\frac{(k+1)4^k}{(ed_\ell)^2}; \quad \Pi(\ell,k):= \frac{[2(k+1)^2]^{k+1}k^k}{e^{k}d_\ell^{k}}
\\
\label{Pll}
&&
P(\ell,k,\ep):=\frac{\theta_{\ell,k}(\N,\ep)^{k+1}}{[1-\theta_\ell(\N,\ep)]^{k+1}}
\eea
\vskip 6pt\noindent
where $\theta_{\ell,k}(\N,\ep)$ is defined by (\ref{Theta}). (\ref{stimaPsi}) and (\ref{Pll})  yield
\be
\label{alk}
A(\ell,k,\ep)= \gamma \frac{\tau^\tau}{(ed_\ell)^\tau}[1+\Pi(\ell,k)P(\ell,k,\ep)].
 \ee
 Set furthermore:
\bea
&&
\label{resto22}
 E({\ell}, k,\ep)
:=
\frac{\Psi(\ell,k)A(\ell,k,\ep)[2+
|\ep_{\ell}| \Psi(\ell,k)A(\ell,k,\ep)\|\V_{\ell,\ep}\|_{\rho_\ell,k}]}{1-|\ep_\ell | \Psi(\ell,k) A(\ell,k,\ep)\|\V_{\ell,\ep}\|_{\rho_\ell,k}}
 \eea
Then we have:
\begin{lemma}
\label{stimaVl+1}
Let:
 \be
 \label{stimadenE}
|\ep_\ell | \Psi(\ell,k)A(\ell,k,\ep)\|\V_{\ell,\ep}\|_{\rho_\ell,k}<1.
  \ee 
Then:
\be
 \label{restoll}
 \|V_{\ell+1,\ep}\|_{\rho_{\ell+1},k}\leq  E({\ell}, k,\ep)\|V_{\ell,\ep}\|^{2}_{\rho_{\ell},k}
  \ee
 \vskip 3pt\noindent
 \end{lemma}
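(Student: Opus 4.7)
The statement is essentially a repackaging of estimate \eqref{resto2} of Theorem \ref{resto} in the abbreviated notation introduced at the beginning of Section \ref{recesti}. My plan is therefore to apply Theorem \ref{resto} directly and then carry out the purely algebraic identification of the constants.

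First I would verify that the hypothesis of Theorem \ref{resto} is in force. The standing condition \eqref{condepell} has already been reduced (see Remark \ref{verifica}, which the authors promise to check in the next section) to a simple smallness assumption on $\ep$ in terms of $|\V|_\rho$; for the purpose of this lemma it is subsumed by \eqref{stimadenE}, which in particular ensures that the denominator of $E(\ell,k,\ep)$ is strictly positive so that the bound makes sense. At that point estimate \eqref{resto2} gives
\[
\|V_{\ell+1,\ep}\|_{\rho_\ell-2d_\ell,k}\;\leq\; \frac{C(\ell,k,\ep)}{1-|\ep_\ell|\,(k+1)4^k\,A(\ell,k,\ep)\|\V_{\ell,\ep}\|_{\rho_\ell,k}/(ed_\ell)^2}\;\|\V_{\ell,\ep}\|^{2}_{\rho_\ell,k}.
\]

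Next I would substitute the abbreviation $\Psi(\ell,k)=(k+1)4^k/(ed_\ell)^2$ both in the denominator and in the explicit expression \eqref{Cdrk} of $C(\ell,k,\ep)$. This rewriting gives
\[
C(\ell,k,\ep)=\Psi(\ell,k)\,A(\ell,k,\ep)\bigl[\,2+|\ep_\ell|\,\Psi(\ell,k)\,A(\ell,k,\ep)\,\|\V_{\ell,\ep}\|_{\rho_\ell,k}\,\bigr],
\]
and, upon dividing by the denominator above, matches term for term the definition \eqref{resto22} of $E(\ell,k,\ep)$.

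Finally, because the norm $\|\cdot\|_{\rho',k}$ is monotone non-decreasing in $\rho'$ (Remark~2.4), and since the choice of scales in the iteration gives $\rho_{\ell+1}\le \rho_\ell-2d_\ell$ (the parameter $d_\ell$ being the ``two-step'' loss that was split into $d_\ell+d_\ell'$ inside the proof of Theorem \ref{resto}), I would conclude $\|V_{\ell+1,\ep}\|_{\rho_{\ell+1},k}\le \|V_{\ell+1,\ep}\|_{\rho_\ell-2d_\ell,k}$, which yields \eqref{restoll}.

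There is essentially no analytic obstacle, since all the hard work -- the Moyal-bracket estimates of Proposition \ref{stimeMo}, the solution of the homological equation in Theorem \ref{homo}, and the conjugation estimate in Lemma \ref{RResto4} -- has already been carried out in Theorem \ref{resto}. The only mildly delicate point is book-keeping: ensuring that the ``$2d_\ell$'' appearing in \eqref{resto2} is interpreted consistently with the nested scales $\rho_\ell\searrow \rho_{\ell+1}$ used in the KAM iteration, so that the monotonicity step is legitimate. Once that convention is fixed, the lemma follows by inspection.
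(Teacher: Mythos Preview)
Your proposal is correct and follows essentially the same route as the paper: both arguments simply insert the abbreviation $\Psi(\ell,k)=(k+1)4^k/(ed_\ell)^2$ into estimate \eqref{resto2} and the expression \eqref{Cdrk} for $C(\ell,k,\ep)$, and then recognise the resulting quotient as the definition \eqref{resto22} of $E(\ell,k,\ep)$. The paper's own proof is in fact terser than yours—it does not comment on the passage from $\rho_\ell-2d_\ell$ to $\rho_{\ell+1}$ at all—so your remark about the book-keeping of scales is, if anything, an improvement in clarity rather than a deviation.
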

 \begin{remark} The validity of the assumption (\ref{stimadenE}) is to be verified in Proposition \ref{estl} below. 
 \end{remark}
\begin{proof}
 By (\ref{Cdrk}), (\ref{stimaPsi}) and (\ref{alk}) we can write:
\be
C(\ell,k,\ep)\leq \Psi(\ell,k)A(\ell,k,\ep))\left[2+
|\ep_{\ell}| \Psi(\ell,k)A(\ell,k,\ep)\|\V_{\ell,\ep}\|_{\rho_\ell,k}\right]
\ee
and therefore, by (\ref{resto2}):
\begin{eqnarray*}
&&
\|V_{\ell+1,\ep}\|_{\rho_\ell-2d_\ell,k}\leq 
 C(\ell,k,\ep) \frac{\|\V_\ell\|^2_{\rho_\ell,k}} {1-|\ep_\ell| \Psi(\ell,k) A(\ell,k,\ep)\|\V\|_{\rho_\ell,k}}
\\
&&
{}
\\
&&
\leq \frac{\Psi(\ell,k)A(\ell,k,\ep)\left[2+
|\ep_{\ell}| \Psi(\ell,k)A(\ell,k,\ep)\|\V_{\ell,\ep}\|_{\rho_\ell,k}\right]}{1-|\ep_\ell | \Psi(\ell,k)A(\ell,k,\ep)\|\V_{\ell,\ep}\|_{\rho_\ell,k}}\|\V_\ell\|^2_{\rho_\ell,k}
\\
&&
= E(\ell,k,\ep)\|\V_\ell\|^2_{\rho_\ell,k}.
\end{eqnarray*}
\vskip 6pt\noindent
This yields (\ref{restoll}) and proves the Lemma.
\end{proof}
 Now recall that the sequence $\{\rho_j\}$ is decreasing. Therefore:
\be
\|\N_{j,\ep}\|_{\rho_\ell,k}\leq \|\N_{j,\ep}\|_{\rho_j,k}=
\|\overline{\V}_{j,\ep}\|_{\rho_j,k} 
\leq \|{\V}_{j,\ep}\|_{\rho_j,k}, \quad \;j=0,\ldots,\ell-1. 
\ee
 \vskip 4pt\noindent
At this point we can specify the sequence $d_\ell, \ell=1,2,\ldots$,   setting:
\vskip 4pt\noindent
\be
\label{ddelta}
d_\ell:=\frac{\rho}{(\ell+1)^2}, \qquad \ell=0,1,2,\ldots
\ee
\vskip 4pt\noindent
Remark that (\ref{ddelta}) yields 
 $$
d- \sum_{\ell=0}^\infty d_\ell=\rho-\frac{\pi^2}{6}>\frac{\rho}{2}.
$$
as well as  the following  estimate
\be
\label{stimapigreco}
\Pi(\ell,k)\leq  \frac{[2(k+1)^2]^{k+1}k^k(\ell+1)^{2k}}{e^{k}\rho^{k}}
\ee
 \vskip 2pt\noindent
We are now in position to discuss the convergence of the recurrence (\ref{restoll}). 
 \begin{proposition}
 \label{estl}
 Let: 
  \be
  \label{condrho}
 \rho> 2
\ee
 \be
 \label{condep}
 |\ep|< \ep^\ast(\tau,k):=
 \frac{1}{e^{24(2+k+\tau)}(k+2)^{2\tau}\|\V\|_{\rho,k}}
   \ee
 \vskip 4pt\noindent
\be
\label{3condep}
\gamma\tau^\tau (k+\tau+2)^{4(k+\tau+2)} < \frac{1}{2}
\ee
\vskip 8pt\noindent
Then the following estimate holds:
\vskip 8pt\noindent
 \begin{equation}
\label{rec2} 
\|\V_{\ell,\ep}\|_{\rho_\ell,k} \leq \left(e^{8(
2+k+\tau)}  \|\V\|_{\rho,k}\right)^{2^{\ell}}, \quad \ell=1,2,\ldots.
\end{equation}
\end{proposition}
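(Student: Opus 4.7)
The plan is to prove \eqref{rec2} by induction on $\ell$. The base case $\ell = 1$ amounts to showing $\|\V_{1,\ep}\|_{\rho_1,k}\leq K^{2}$ where $K := e^{8(2+k+\tau)}\|\V\|_{\rho,k}$; this follows from Lemma \ref{stimaVl+1} applied to $\V_0=\V$, once one verifies that the smallness assumption \eqref{stimadenE} holds at $\ell=0$, which is immediate from \eqref{condep} since at $\ell=0$ the quantity $|\ep_0|\Psi(0,k)A(0,k,\ep)\|\V\|_{\rho,k}$ is a fixed constant multiple of $|\ep|\,\|\V\|_{\rho,k}$. For the inductive step, assume \eqref{rec2} for all $j\leq\ell$ and seek the bound for $\ell+1$. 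The quadratic recurrence \eqref{restoll} of Lemma \ref{stimaVl+1} gives directly
\[
\|\V_{\ell+1,\ep}\|_{\rho_{\ell+1},k}\;\leq\;E(\ell,k,\ep)\,\|\V_{\ell,\ep}\|_{\rho_\ell,k}^{\,2}\;\leq\;E(\ell,k,\ep)\,K^{2^{\ell+1}},
\]
so the question reduces to controlling $E(\ell,k,\ep)$ along the iteration.

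The key technical step is to estimate $\theta_{\ell,k}(\N,\ep)$ via the inductive hypothesis. Since $\|\N_j\|_{\rho_j,k}\leq\|\V_j\|_{\rho_j,k}$ by item (2) of Theorem \ref{homo}, since $d_j=\rho/(j+1)^2$ and $\ep_j=\ep^{2^j}$, formula \eqref{Theta} gives
\[
\theta_{\ell,k}(\N,\ep)\;\leq\;\sum_{j=0}^{\ell-1}\frac{(|\ep|K)^{2^j}(j+1)^2}{e\rho}.
\]
By \eqref{condep} the product $|\ep|K\leq e^{-16(2+k+\tau)}(k+2)^{-2\tau}$ is extremely small, so this sum is bounded by a fixed small constant (certainly $\leq 1/2$) uniformly in $\ell$. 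This forces $(1-\theta_\ell)^{-1}\leq 2$ and $P(\ell,k,\ep)\leq 2^{k+1}\theta_{\ell,k}^{\,k+1}\ll 1$. Inserting these bounds into \eqref{stimaPsi}, \eqref{alk} and \eqref{stimapigreco} and using $\rho>2$, one obtains an estimate of the form
\[
\Psi(\ell,k)\,A(\ell,k,\ep)\;\leq\;C_1(k,\tau)\,(\ell+1)^{2\tau+4},
\]
with $C_1(k,\tau)$ an explicit expression involving $\gamma\tau^\tau(k+1)4^k$ and some $\rho$-dependent factors. A parallel check of $|\ep_\ell|\Psi(\ell,k)A(\ell,k,\ep)\|\V_{\ell,\ep}\|_{\rho_\ell,k}\leq(|\ep|K)^{2^\ell}\cdot\mathrm{poly}(\ell)\ll 1$ verifies the hypothesis \eqref{stimadenE} of Lemma \ref{stimaVl+1} at every step and simultaneously yields $E(\ell,k,\ep)\leq 5\Psi(\ell,k)A(\ell,k,\ep)\leq C_2(k,\tau)(\ell+1)^{2\tau+4}$.

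With $E_\ell:=E(\ell,k,\ep)$ so bounded, iterating the quadratic recurrence and taking the $2^\ell$-th root reduces the inductive step to the arithmetic inequality
\[
\sum_{j=0}^{\infty}2^{-j-1}\log E_j\;\leq\;\log C_2(k,\tau)+(2\tau+4)\sum_{m=1}^{\infty}2^{-m}\log m\;\leq\;8(2+k+\tau).
\]
The series $\sum 2^{-m}\log m$ is a fixed numerical constant ($<1$), and the hypothesis \eqref{3condep}, which imposes $\gamma\tau^\tau(k+\tau+2)^{4(k+\tau+2)}<\tfrac12$, is precisely what makes $\log C_2(k,\tau)$ sufficiently negative in $k,\tau$ to absorb the polynomial factor $(2\tau+4)\sum 2^{-m}\log m$ and leave room under $8(2+k+\tau)$. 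The main obstacle of the proof is therefore not structural — the induction and the telescoping of the quadratic recurrence are standard KAM bookkeeping — but the precise tracking of all the polynomial-in-$\ell$ and exponential-in-$(k,\tau)$ factors coming out of $\Psi$, $\Pi$, $A$ and the Stirling-type estimates used in deriving \eqref{Adrk}, so that the specific constant $8$ in the exponent $e^{8(2+k+\tau)}$ is actually achieved under assumptions \eqref{condrho}--\eqref{3condep}.
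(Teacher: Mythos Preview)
Your approach is essentially the same as the paper's: induction, control of $\theta_{\ell,k}$ via the inductive hypothesis, then of $A(\ell,k,\ep)$ and $E(\ell,k,\ep)$, and finally the standard telescoping of the quadratic recurrence using $\sum_{m\geq 1}2^{-m}\log m<\infty$. Two points are worth flagging.

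\textbf{A real slip in the $\ell$-exponent.} You claim $\Psi(\ell,k)A(\ell,k,\ep)\leq C_1(k,\tau)(\ell+1)^{2\tau+4}$. This drops a factor $(\ell+1)^{2k}$. Indeed $\Psi(\ell,k)$ contributes $(\ell+1)^4$ through $d_\ell^{-2}$, the $\gamma\tau^\tau/(ed_\ell)^\tau$ in $A$ contributes $(\ell+1)^{2\tau}$, but the term $\Pi(\ell,k)P(\ell,k,\ep)$ in \eqref{alk} also survives: by \eqref{stimapigreco} $\Pi(\ell,k)$ carries $(\ell+1)^{2k}$, while your bound on $P(\ell,k,\ep)$ is uniform in $\ell$ (it does \emph{not} decay with $\ell$), so $\Pi P$ retains the factor $(\ell+1)^{2k}$. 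The correct bound is therefore of the form $E(\ell,k,\ep)\leq C_2(k,\tau)(\ell+1)^{2(k+\tau+2)}$, exactly as in the paper's estimate of $\Phi_{\ell,k}$. With this correction your telescoping inequality becomes $\log C_2(k,\tau)+2(k+\tau+2)\sum_{m\geq 1}2^{-m}\log m\leq 8(2+k+\tau)$, and the $k$-dependence of the constant $8(2+k+\tau)$ now appears naturally; the argument then closes under \eqref{3condep} just as you describe.

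\textbf{A minor difference in the induction scheme.} The paper does not use \eqref{rec2} itself as the inductive hypothesis; it carries instead the auxiliary estimate $|\ep_j|\|\V_{j,\ep}\|_{\rho_j,k}\leq (k+2)^{-2\tau(j+1)}$ for $0\leq j\leq\ell$, from which both \eqref{stimadenE} and the estimate on $\theta_{\ell,k}$ follow in a slightly more uniform way, and then derives \eqref{rec2} at the end. Your choice of hypothesis (namely \eqref{rec2} directly) works just as well, since $|\ep|K$ is small enough under \eqref{condep} that $(|\ep|K)^{2^j}$ controls all the required sums; the paper's version is just a bit cleaner for bookkeeping.
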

\vskip 4pt\noindent
\begin{proof}
We proceed by induction. The assertion is true for $\ell=0$.  Now assume inductively:
\vskip 6pt\noindent
\be
\label{Hell}
 |\ep_j|\|\V_{j,\ep}\|_{\rho_j,k}\leq (k+2)^{-2\tau( j+1)},
%\qquad\quad 0\leq j\leq \ell.
\ee
\vskip 6pt\noindent
for $0\leq j\leq \ell$. Out of this we prove the validity of (\ref{rec2}) and of (\ref{stimadenE}); to complete the induction it will be enough to show that (\ref{rec2}) implies the validity of (\ref{Hell}) for $j=\ell+1$. 
\vskip 6pt
A preliminary result is the estimate of $ |\ep_\ell | \Psi_{\ell,k}A(\ell,k,\ep)\|\V\|_{\rho_\ell,k}$:
\begin{lemma}
\label{stimadenE1}
Let  \eqref{Hell} hold. Then:
\be
\label{stimadenE2}
|\ep_\ell | \Psi_{\ell,k} | A(\ell,k,\ep)\|\V\|_{\rho_\ell,k}\leq \frac1{2}.
\ee
\end{lemma}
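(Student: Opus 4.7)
The plan is a direct three-step verification that substitutes the inductive hypothesis $|\ep_j|\|\V_j\|_{\rho_j,k} \leq (k+2)^{-2\tau(j+1)}$ (applied with $j = \ell$ and $j \leq \ell-1$) and the explicit choice $d_\ell = \rho/(\ell+1)^2$ into the formulas (\ref{Adrk}), (\ref{stimaPsi}), (\ref{stimapigreco}), then invokes (\ref{condrho}), (\ref{condep}), (\ref{3condep}) to close the estimate.

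\textbf{Step 1 (smallness of $\theta_{\ell,k}$).} By Theorem \ref{homo}(2), $\|\N_s\|_{\rho_s,k} \leq \|\V_s\|_{\rho_s,k}$; together with the inductive hypothesis and $1/d_s = (s+1)^2/\rho$, the definition (\ref{Theta}) yields
\begin{equation*}
\theta_{\ell,k}(\N,\ep) \;\leq\; \frac{1}{e\rho}\sum_{s=0}^{\ell-1}(s+1)^2(k+2)^{-2\tau(s+1)} \;\leq\; \frac{20/27}{e\rho} \;<\; \frac{1}{2e},
\end{equation*}
using $(k+2)^{2\tau} \geq 4$ (hence domination by the convergent series $\sum_{n\geq 1}n^2/4^n = 20/27$) and $\rho > 2$ from (\ref{condrho}). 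Consequently $1 - \theta_\ell > 1/2$ and $P(\ell,k,\ep) \leq (2\theta_{\ell,k})^{k+1}$ is doubly small.

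\textbf{Step 2 (bound on $\Psi\cdot A \cdot |\ep_\ell|\|\V_\ell\|$).} Inserting Step 1 into (\ref{Adrk}) and applying (\ref{stimapigreco}) gives $A(\ell,k,\ep) \leq \gamma\tau^\tau(\ell+1)^{2\tau}/(e\rho)^\tau\cdot[1 + K_k(\ell+1)^{2k}]$ for a constant $K_k$ depending only on $k,\tau,\rho$. Combining with $\Psi(\ell,k) = (k+1)4^k(\ell+1)^4/(e\rho)^2$ and the inductive bound, the LHS of (\ref{stimadenE2}) is dominated by
\begin{equation*}
\gamma\tau^\tau\, C_k \,\sup_{\ell\geq 0}(\ell+1)^{4+2\tau+2k}(k+2)^{-2\tau(\ell+1)},
\end{equation*}
with $C_k$ an $\ell$-independent constant collecting all combinatorial and $\rho$-dependent factors.

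\textbf{Step 3 (maximization and conclusion).} The elementary estimate $\sup_{x\geq 1} x^\alpha(k+2)^{-2\tau x} \leq (\alpha/(2e\tau\log(k+2)))^\alpha$, applied with $\alpha = 4+2\tau+2k$ and $\log(k+2) \geq \log 2$, bounds the supremum by a quantity of the form $D(2+k+\tau)^{4+2\tau+2k}$ for an absolute constant $D$. The aggregated $k$-dependent factor $C_k\cdot D$ is absorbed into the buffer $e^{24(2+k+\tau)}$ provided by (\ref{condep}) (which enters because $\|\V_\ell\|_{\rho_\ell,k}$ is controlled via the induction in terms of $\|\V\|_{\rho,k}$), leaving an upper bound of the form $\tfrac{1}{2}\gamma\tau^\tau(k+\tau+2)^{4(k+\tau+2)}$. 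Invoking (\ref{3condep}) then gives (\ref{stimadenE2}).

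The main obstacle is the bookkeeping of exponents: every polynomial-in-$\ell$ prefactor (from $d_\ell^{-\tau}$, $d_\ell^{-2}$, and $\Pi(\ell,k)$) must be absorbed against the super-exponential decay $(k+2)^{-2\tau(\ell+1)}$ in such a way that the residual $k$- and $\tau$-dependence matches exactly the combinatorial form $(k+\tau+2)^{4(k+\tau+2)}$ prescribed by (\ref{3condep}); the constants $e^{24(2+k+\tau)}$ in (\ref{condep}) and $4(k+\tau+2)$ in (\ref{3condep}) are calibrated precisely to leave enough slack for the numerical factor $1/2$ to come out.
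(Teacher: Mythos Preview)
Your three-step structure matches the paper's proof, and Steps~1--2 are essentially correct (the paper bounds $\theta_{\ell,k}$ via a second-derivative identity rather than your $\sum n^2/4^n$ estimate, but either works).

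Step~3, however, contains a genuine confusion. You claim that the constant $C_k\cdot D$ is ``absorbed into the buffer $e^{24(2+k+\tau)}$ provided by (\ref{condep})''. This cannot work: once you have applied the inductive hypothesis (\ref{Hell}) at $j=\ell$ to bound $|\ep_\ell|\,\|\V_\ell\|_{\rho_\ell,k}$ by $(k+2)^{-2\tau(\ell+1)}$, there is no residual $\ep$-dependence left for the smallness condition (\ref{condep}) to act on. Nor can you invoke (\ref{rec2}) to express $\|\V_\ell\|_{\rho_\ell,k}$ in terms of $\|\V\|_{\rho,k}$ and then apply (\ref{condep}), since (\ref{rec2}) at index $\ell$ is precisely what the surrounding induction in Proposition~\ref{estl} is in the process of establishing; using it here would be circular.

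The paper closes the argument using only (\ref{condrho}) and (\ref{3condep}). After inserting (\ref{Hell}) one is left with $\gamma\tau^\tau\,4^{k+1}(k+1)k^k\,(e\rho)^{-(\tau+2)}(\ell+1)^{2(k+\tau+2)}(k+2)^{-2\tau(\ell+1)}$. The factor $(e\rho)^{-(\tau+2)}$ is dropped since $e\rho>1$; the combinatorial factor satisfies $4^{k+1}(k+1)k^k\leq [4(k+1)]^{k+1}\leq (k+\tau+2)^{2(k+1)}$ because $\tau\geq 2$ forces $4(k+1)\leq (k+\tau+2)^2$; and the supremum over $\ell$ of $(\ell+1)^{2(k+\tau+2)}(k+2)^{-2\tau(\ell+1)}$ contributes at most another factor $(k+\tau+2)^{2(k+\tau+2)}$. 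Together these give exactly $\gamma\tau^\tau(k+\tau+2)^{4(k+\tau+2)}$, and (\ref{3condep}) yields the bound $1/2$. No buffer from (\ref{condep}) is needed or available at this point.
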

\begin{proof}
Let  us first estimate $\theta_\ell(\N,\ep)$ as defined by \eqref{theta} assuming the validity of \eqref{Hell} . We obtain:
\begin{eqnarray*}
&&
\theta_\ell(\N,\ep)\leq \theta_{\ell,k}(\N,\ep) \leq \sum_{s=0}^{\ell-1}|\ep_s|\|\V\|_{\rho_s,k}/d_s
= \frac{1}{\rho}\sum_{s=0}^{\ell-1}\,(s+1)^2(k+2)^{-2\tau  (s+1)}=
\\
&&
\frac{1}{4\rho}\frac{d^2}{d\tau^2}\sum_{s=0}^{\ell-1}\,(k+2)^{-2\tau  (s+1)}
=\frac{1}{4\rho}\frac{d^2}{d\tau^2}[(k+2)^{-2\tau}\frac{1-(k+2)^{-2\tau \ell}}{1-(k+2)^{-2\tau}} \leq \frac{1}{\rho}(k+2)^{-2}\leq \frac{1}{\rho}
\end{eqnarray*}
because $\tau>l-1\geq 1$. 
   Now $\rho>1$  entails that 
  \be
  \label{dentheta}
  \frac1{1-\theta_\ell}<\frac{\rho}{\rho-1}.
  \ee
  \vskip 4pt\noindent
   Hence we get,  by (\ref{Pll}) and (\ref{Theta}), the further $(\ell,\ep)-$in\-de\-pen\-dent estimate:
\be
\label{Hells}
P(\ell,k,\ep)\leq \frac{\rho^{k+1}}{(\rho-1)^{k+1}}\left((k+2)^2{\rho}\right)^{-k-1}\leq \left(\frac{1}{(k+2)^2}\right)^{k+1}.
\ee
whence, by (\ref{alk}):
\begin{eqnarray}
\nonumber
&&
A(\ell,k,\ep)\leq \gamma\frac{\tau^\tau (\ell+1)^{2\tau}}{(e\rho)^\tau}[1+[2(k+1)^2]^{k+1}\left[(k+2)^2\right]^{-(k+1)}k^{k}(\ell+1)^{2k}]
\\
\label{stimaAell}
&&
\leq 4\gamma\frac{\tau^\tau (\ell+1)^{2(\tau+k)}k^k}{(e\rho)^\tau}.
\end{eqnarray}
\vskip 5pt\noindent
Upon application of the inductive assumption and \eqref{stimaAell} we get:
\vskip 5pt\noindent
\begin{eqnarray*}
&&
|\ep_\ell | \Psi_{\ell,k}A(\ell,k,\ep)\|\V\|_{\rho_\ell,k}\leq \frac{ 4^{k} (k+1)}{e^{2}\rho^{2}}(\ell+1)^{4}|\ep_\ell | A(\ell,k,\ep)\|\V\|_{\rho_\ell,k}
\\
&&
\leq \gamma\tau^{\tau} \frac{ 4^{k+1} (k+1)}{(e\rho)^{\tau+2}} (\ell+1)^{2(k+\tau+2)}k^k|\ep_\ell |\|\V\|_{\rho_\ell,k}
\\
&&
\leq \gamma\tau^{\tau} \frac{ 4^{k+1} (k+1)}{(e\rho)^{\tau+2}} (\ell+1)^{2(k+\tau+2)}k^k(k+2)^{-2(\ell+1)\tau} 
\end{eqnarray*}
\vskip 5pt\noindent
whence
\vskip 4pt\noindent
\bea
\label{stimaAell2}
&&
|\ep_\ell | \Psi_{\ell,k}A(\ell,k,\ep)\|\V\|_{\rho_\ell,k}\leq
\gamma\tau^{\tau} \frac{ 4^{k+1} (k+1)}{(e\rho)^{\tau+2}}k^k\kappa(k,\tau)^{2(k+\tau+2)}(k+2)^{-2\kappa(k,\tau)}\\
&&
\kappa(k,\tau):=\frac{k+\tau+2}{\tau\ln{(k+\tau)}}
\eea
because
$$
\sup_{\ell\geq 0}
(\ell+1)^{2(\tau+k+2)}(k+2)^{-2(\ell+1)\tau} =\kappa(k,\tau)^{2(k+\tau+2)}(k+2)^{-2\kappa(k,\tau)}.
$$
\vskip 4pt\noindent
Hence:
\be
\label{stimaBpsi}
|\ep_\ell | \Psi_{\ell,k}A(\ell,k,\ep)\|\V\|_{\rho_\ell,k}\leq \frac1{2}
\ee
provided \eqref{condrho} and \eqref{3condep} hold. As a matter of fact:
\vskip 5pt\noindent
\begin{eqnarray*}
&&
|\ep_\ell | \Psi_{\ell,k}A(\ell,k,\ep)\|\V\|_{\rho_\ell,k} \leq \gamma\tau^{\tau} \frac{ 4^{k+1} (k+1)}{(e\rho)^{\tau+2}}k^k\kappa(k,\tau)^{2(k+\tau+2)}(k+2)^{-2\kappa(k,\tau)}
\\
&&
 \leq  \gamma\tau^{\tau} [4(k+1)]^{k+1}(k+\tau+2)^{2(k+\tau+2)} \leq \gamma\tau^{\tau} (k+\tau+2)^{4(k+\tau+2)}
\end{eqnarray*}
\vskip 5pt\noindent
because  $e\rho >1$, $4(k+1)< (k+\tau+2)^2$ since $\tau\geq 2$, and $\kappa(k,\tau)\leq (k+\tau+2)$.   Hence \eqref{stimaBpsi} is implied by the inequality
\be
\label{4condep}
\gamma\tau^{\tau} (k+\tau+2)^{4(k+\tau+2)}
<\frac12
\ee
\vskip 6pt\noindent
which is \eqref{3condep}. 
    The Lemma is proved.
\end{proof}
\vskip 4pt\noindent
%\begin{proof} 
{\it Proof of Proposition \ref{estl}}.  By (\ref{resto22}):
  $$
 E(\ell,k,\ep) \leq 5 \Psi_{\ell,k}A(\ell,k,\ep) \leq 20 \gamma\tau^\tau (\ell+1)^{2(\tau+k)}k^k \Psi_{\ell,k}
   $$
 once more because $e\rho>1$. 
   (\ref{restoll}) in turn entails:
$$
\|\V_{\ell+1,\ep}\|_{\rho_\ell+1,k}\leq \Phi_{\ell,k}
\|\V_{\ell,\ep}\|_{\rho_\ell,k}^2, \quad \Phi_{\ell,k}:=20 \gamma\tau^\tau (\ell+1)^{2(\tau+k)} \Psi_{\ell,k}. 
$$
This last inequality immediately yields
\be
\label{rec3}
\|\V_{\ell+1,\ep}\|_{\rho_{\ell+1},k} \leq [\|\V\|_{\rho,k}]^{2^{\ell+1}}\prod_{m=0}^{\ell}\Phi_{\ell -m,k}^{2m}.
\ee
\vskip 3pt\noindent
Now:
\begin{eqnarray*}
&&
\Phi_{\ell,k}= 20 \gamma\tau^\tau (\ell+1)^{2(\tau+k)}k^k \frac{(k+1)4^{k}}{(ed_{\ell})^2}\leq \nu(k,\tau)(\ell+1)^{2(k+\tau+2)}
\\
&&
\nu(k,\tau):=20\gamma \tau^{\tau} 4^{k}(k+1)k^k
\end{eqnarray*} 
\vskip 5pt\noindent
Now  the following inequality is easily checked:
\vskip 4pt\noindent
\bea
\label{gammanu}
\nu(k,\tau)= 20\gamma\tau^\tau (k+1)4^k k^k \leq 2 \gamma\tau^\tau
(k+\tau+2)^{4(k+\tau+2)}
\eea
\vskip 5pt\noindent
because $\tau\geq 2$, and therefore, by \eqref{3condep}, \eqref{condrho} we get: $\nu(k,\tau)\leq 1$. 
 As a consequence we have
 %may drop the index $k$, and write:
 \be
 \Phi_{\ell,k}(\ell+1)^{2(k+\tau+2)}
\ee
  Moreover, since $\Phi_{j,k}\leq \Phi_{\ell,k}, j\leq \ell$, we get, by \eqref{gammanu}: 
 \vskip 5pt\noindent
 \begin{eqnarray*}
\prod_{m=1}^{\ell}\Phi^{2m}_{\ell+1-m,k} \leq  [\Phi_{\ell,k}]^{\ell(\ell+1)}\leq  (\ell+1)^{2(k+\tau+2)\ell(\ell+1)}
\end{eqnarray*} 
\vskip 3pt\noindent
 Now using $\ell(\ell+1)\log{\ell+1}<4\times2^{\ell+1}$, $\forall\,\ell\in\Na$, we get:
 $$
 (\ell+1)^{2(k+\tau+2)\ell(\ell+1)}
< [e^{8(k+\tau+2)}]^{2^{\ell+1}}.
 $$
The following estimate is thus established
\bea
\label{stimapsi}
&&
\prod_{m=0}^{\ell}\Phi^{2m}_{\ell -m,k} \leq 
[e^{8(k+\tau+2)}] ^{2^{\ell+1}}.
\eea
If we now define:
\bea
\label{mu}
&&
\mu :=e^{8(k+\tau+2)}, 
%\qquad \mu_\ell:=\mu^{2^\ell}
 \eea
then  (\ref{rec3}) and (\ref{stimapsi}) yield: 
\vskip 6pt\noindent
\bea
&&
\label{GVS}
\|\V_{\ell+1,\ep}\|_{\rho_{\ell+1,k}} \leq \left[\mu^{2^l}\|\V_{\ell,\ep}\|_{\rho_\ell,k}\right]^{2}\leq \left[\|\V\|_{\rho,k}\,\mu\right]^{2^{\ell+1}}
\\
\label{GVSS}
&&
\mbox{ and therefore }\nonumber\\
&&
\ep_{\ell+1}\|\V_{\ell+1,\ep}\|_{\rho_{\ell+1,k}} \leq \left[ \|\V\|_{\rho_\ell,k}\,\mu^{2^l}\ep_\ell\right]^{2}
 \leq \left[ \|\V\|_{\rho,k}\,\mu\ep\right]^{2^{\ell+1}}
\eea
\vskip 5pt\noindent

\eqref{GVS} is exactly \eqref{rec2}. Let us now prove out of (\ref{GVS},\ref{GVSS}) that the condition (\ref{Hell}) preserves its validity also for $j=\ell+1$. We have indeed, by the inductive assumption (\ref{Hell}) and (\ref{GVS}):
\begin{eqnarray*}
\label{verifica}
&&
 |\ep_{\ell+1}| \|\V_{\ell+1,\ep}\|_{\rho_{\ell+1,k}} \leq   \left[ \|\V\|_{\rho_\ell,k}\,\mu^{2^l}\ep_\ell\right]^{2}\leq (k+2)^{-2\tau(\ell+1)}\ep_\ell(\mu^{2^l})^2\|\V\|_{\rho_\ell,k}
 \\
 &&
 \leq (k+2)^{-2\tau(\ell+1)}\left[\ep\mu^3\|\V\|_{\rho,k}\right]^{2^\ell}\leq (k+2)^{-2\tau(\ell+2)}
\end{eqnarray*}
provided
\vskip 4pt\noindent
\be
\label{epsast}
|\ep|< \frac{1}{\mu^3\|\V\|_{\rho,k}(k+2)^{2\tau}}= \frac{1}{e^{24(k+\tau+2)}\|\V\|_{\rho,k}(k+2)^{2\tau}}:=\ep^\ast(\tau,k)
\ee
\vskip 8pt\noindent
where the last expression follows from (\ref{mu}).  This proves (\ref{Hell}) for $j=\ell+1$, and concludes the proof of the Proposition.
\end{proof}
\noindent
 %%%%%%%%%%%%%%%%%%%%%%%%%%%%
  \begin{theorem}\label{final}[Final estimates of $W_\ell$, $N_\ell$, $V_\ell$]
 %%%%%%%%%%%%%%%%%%%
 %Final estimates
 %%%%%%%%%%%%%%%% 
 \newline
 Let $\V$ fulfill Assumption (H2-H4), and let \eqref{3condep} be verified. Then the following estimates hold,  $\forall \ell\in\Na$:
 \vskip 4pt\noindent
 \begin{eqnarray}
   \label{stimafw}
 \ep_\ell \|W_{\ell,\ep}\|_{\rho_{\ell+1},k}\leq   
 %\ep_\ell \|W_{\ell,\ep}\|_{\rho_{\ell+1},k}\leq  
 %\frac{1}{2} 
 (\ell+1)^{2(\tau+k)}\cdot(\mu \ep \|\V\|_{\rho})^{2^{\ell}}.
  \end{eqnarray}
 \begin{eqnarray}
  \label{stimafn}
 \ep_\ell \|N_{\ell,\ep}\|_{\rho_\ell,k}\leq \ep_\ell \|\V_{\ell,\ep}\|_{\rho_\ell,k}\leq \left[ \|\V\|_{\rho}\,\ep \mu\right]^{2^{\ell}}.
 \end{eqnarray}
 \begin{eqnarray}
 \label{stimafv}
\ep_{\ell+1} \|V_{\ell+1,\ep}\|_{\rho_{\ell+1},k}\leq  
 \left[ \|\V\|_{\rho}\,\ep \mu\right]^{2^{\ell+1}}.
\end{eqnarray}
\end{theorem}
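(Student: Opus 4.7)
The plan is to recognize that the three estimates are essentially compilations of bounds already established, with the main new ingredient being the extraction of a clean polynomial-in-$\ell$ factor for the $W$-estimate out of the bound on $A(\ell,k,\ep)$.

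First, estimate \eqref{stimafn} comes almost for free. Theorem \ref{homo}, assertion (2), gives $\|N_{\ell,\ep}\|_{\rho_\ell,k}\leq \|\V_{\ell,\ep}\|_{\rho_\ell,k}$. Multiplying by $\ep_\ell$ and applying \eqref{rec2} of Proposition \ref{estl} (valid since the hypotheses \eqref{condrho}, \eqref{condep}, \eqref{3condep} hold), one obtains
\[
\ep_\ell\|N_{\ell,\ep}\|_{\rho_\ell,k}\leq \ep_\ell\|\V_{\ell,\ep}\|_{\rho_\ell,k}\leq \ep_\ell\bigl(\mu\|\V\|_{\rho,k}\bigr)^{2^\ell}\leq \bigl(\ep\mu\|\V\|_{\rho}\bigr)^{2^\ell},
\]
using $\ep_\ell=\ep^{2^\ell}$. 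This is exactly \eqref{stimafn}.

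Second, estimate \eqref{stimafv} has in fact already been derived inside the proof of Proposition \ref{estl}, namely in formula \eqref{GVSS}, so I will just quote it (reindexing $\ell\to\ell+1$ if needed). Nothing new is required here: the recursion $\|\V_{\ell+1,\ep}\|_{\rho_{\ell+1},k}\leq (\mu^{2^\ell}\|\V_{\ell,\ep}\|_{\rho_\ell,k})^2$ combined with the inductive bound \eqref{rec2} and $\ep_{\ell+1}=\ep^{2^{\ell+1}}$ yields the claim.

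Third, the substantive estimate is \eqref{stimafw}. Here I will combine three ingredients: (a) the homological bound $\|W_{\ell,\ep}\|_{\rho_{\ell+1},k}\leq A(\ell,k,\ep)\|\V_{\ell,\ep}\|_{\rho_\ell,k}$ from Theorem \ref{homo}; (b) the simplified form $A(\ell,k,\ep)\leq 4\gamma\tau^\tau(\ell+1)^{2(\tau+k)}k^k/(e\rho)^\tau$ established in \eqref{stimaAell} of Proposition \ref{estl} (whose derivation uses the key auxiliary estimate $\theta_\ell(\N,\ep)\leq 1/\rho$ obtained under \eqref{Hell}); and (c) the bound $\ep_\ell\|\V_{\ell,\ep}\|_{\rho_\ell,k}\leq (\ep\mu\|\V\|_\rho)^{2^\ell}$ just proven for \eqref{stimafn}. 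Putting these together gives
\[
\ep_\ell\|W_{\ell,\ep}\|_{\rho_{\ell+1},k}\leq \frac{4\gamma\tau^\tau k^k}{(e\rho)^\tau}\,(\ell+1)^{2(\tau+k)}\,\bigl(\ep\mu\|\V\|_\rho\bigr)^{2^\ell}.
\]
The remaining step is to check that the prefactor $4\gamma\tau^\tau k^k/(e\rho)^\tau$ is bounded by $1$; this follows from the diophantine smallness condition \eqref{3condep}, exactly as in the argument already used in Proposition \ref{estl} to absorb constants (since $k^k\leq (k+\tau+2)^{4(k+\tau+2)}$ and $\rho>2$, so $(e\rho)^{-\tau}\leq 1$). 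This yields \eqref{stimafw}.

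The one step that requires care is the verification that the constant $4\gamma\tau^\tau k^k/(e\rho)^\tau$ is indeed $\leq 1$ under \eqref{3condep}, since strictly speaking this inequality is used only implicitly in Proposition \ref{estl} (where the comparable constant $\nu(k,\tau)$ was bounded by $1$ via \eqref{gammanu}). I will make this comparison explicit by noting $4\gamma\tau^\tau k^k\leq \nu(k,\tau)\leq 1$, so the theorem follows without any genuinely new calculation.
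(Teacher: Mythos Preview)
Your proposal is correct and follows essentially the same approach as the paper: both argue by combining the homological estimate \eqref{Thm5.1} with the simplified bound \eqref{stimaAell} on $A(\ell,k,\ep)$ and the recursive control \eqref{GVS}--\eqref{GVSS} (equivalently \eqref{rec2}), then absorb the residual constant via \eqref{3condep}. If anything, your treatment of the constant absorption is more explicit than the paper's, which simply cites \eqref{3condep} and \eqref{dentheta} and writes the bound as $2\cdot\tfrac12$; your comparison $4\gamma\tau^\tau k^k\leq \nu(k,\tau)\leq 1$ makes that step transparent.
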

\begin{proof}
Since $\V$ does not depend on $\hbar$, obviously $ \|\V\|_{\rho,k}\equiv  \|\V\|_{\rho}$. Then formula (\ref{Thm5.1}) yields, on account of (\ref{stimaAell}), (\ref{dentheta}),  (\ref{3condep}),  
(\ref{GVS}), (\ref{GVSS}):
\vskip 3pt\noindent
 \begin{eqnarray*}
 \nonumber
&&
\label{stimaWfl}
\ep_\ell \|W_{\ell,\ep}\|_{\rho_{\ell+1},k} \leq \gamma\tau^\tau(\ell+1)^{2(k+\tau)}k^k \|\V\ell,\ep\|_{\rho_\ell,k}\leq 
\\ 
&&
\leq  2 \frac12 (\ell+1)^{2(k+\tau)}\cdot (\mu \ep \|\V\|_{\rho})^{2^{\ell}}.
 \end{eqnarray*}
 \vskip 5pt\noindent
This proves (\ref{stimafw}). 
Moreover, since $\N_{\ell,\ep}=\overline{\V}_{\ell,\ep}$, again by (\ref{GVS}), (\ref{GVSS}):
\be\nonumber
\label{stiman}
\ep_\ell \|\N_{\ell,\ep}\|_{\rho_\ell,k}= \ep_\ell \|\overline{\V}_{\ell,\ep}\|_{\rho_\ell,k}\leq \left[ \|\V\|_{\rho}\,\ep \mu\right]^{2^{\ell}}.
\ee
The remaining assertion follows  once more from 
(\ref{GVSS}).
 This concludes the proof of the Theorem.
 \end{proof}
 \begin{remark}
 \label{verifica1}
 (\ref{stimafw}) yields:
 \vskip 4pt\noindent
 $$
 \ep_\ell \frac{\|W_{\ell,\ep}\|_{\rho_{\ell+1},k}}{d_\ell}\leq 4\gamma\tau^\tau\ep^{2^\ell}(\ell+1)^{2(k+\tau+1)}\|\V\|_\rho^{2^\ell}
 $$
 This inequality in  turn entails:
 $$
 |\ep|\left(\frac{\|W_{\ell,\ep}\|_{\rho_{\ell+1},k}}{d_\ell}\right)^{2^{-\ell}}\leq [4\gamma\tau^\tau(\ell+1)^{2(k+\tau+1)}]^{2^{-\ell}}\|\V\|_{\rho}\to \|\V\|_\rho, \quad \ell\to\infty
 $$
 so that (\ref{condepell}) is actually fulfilled for $\ds |\ep|<  \frac1{\|\V\|_\rho}.$
 \end{remark}
 \begin{corollary}
 \label{maincc}
 In the above assumptions set:
 \be
 \label{Un}
 U_{n,\ep}(\hbar):= \prod_{s=0}^ne^{i\ep_{n-s}W_{n-s,\ep}}, \quad n=0,1,\ldots.
 \ee
  Then:
  \begin{enumerate}
  \item $U_{n,\ep}(\hbar)$ is a unitary operator in $L^2(\T^l)$, with
  $$
  U_{n,\ep}(\hbar)^\ast=U_{n,\ep}(\hbar)^{-1}=\prod_{s=0}^ne^{-i\ep_{s}W_{s,\ep}}
  $$
  \item Let:
 \be
 S_{n,\ep}(\hbar):=U_{n,\ep}(\hbar)(\L_\om+\ep V)U_{n,\ep}(\hbar)^{-1} 
 \ee 
 Then:
 \bea
S_n&=&D_{n,\ep}(\hbar)+\ep_{n+1}V_{n+1,\ep}
\\
D_{n,\ep}(\hbar)&=&L_\om+\sum_{s=1}^n\ep_sN_{s,\ep}
\eea
The corresponding symbols are:
\bea
&&
{\mathcal S}_n(\xi,x;\hbar)=\D_{n,\ep}(\L_\om(\xi),\hbar)+\ep_{n+1}V_{n+1,\ep}(\L_\om(\xi),x;\hbar)
\\
\label{sumD}
&&
\D_{n,\ep}(\L_\om(\xi),\hbar)=\L_\om(\xi)+\sum_{s=1}^n \ep_s\N_{s,\ep}(\L_\om (\xi),\hbar).
\eea
 Here the operators $W_{s,\ep}$, $N_{s,\ep}$, $V_{\ell+1,\ep}$ and their symbols $\W_{s,\ep}$, $\N_{s,\ep}$, $\V_{\ell+1,\ep}$ fulfill the above estimates.
 \item 
 Let $\ep^\ast$ be defined as in (\ref{condep}). 
Remark that $ \ep^\ast(\cdot,k)> \ep^\ast(\cdot,k+1),  \,k=0,1,\ldots$.
Then, if $|\ep|<\ep(k,\cdot)$:
\be
\lim_{n\to\infty}\D_{n,\ep}(\L_\om(\xi),\hbar)=\D_{\infty,\ep}(\L_\om(\xi),\hbar)
\ee
where in the convergence takes place in the $C^k([0,1];C^\om (\rho/2))$ topology, namely
\be
\label{limD}
\lim_{n\to\infty}\|\D_{n,\ep}(\L_\om(\xi),\hbar)-\D_{\infty,\ep}(\L_\om(\xi),\hbar)\|_{\rho/2,k}=0. 
\ee
 \end{enumerate}
 \end{corollary}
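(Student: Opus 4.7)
\textbf{Proof proposal for Corollary \ref{maincc}.} For part (1), the plan is to note that by Theorem \ref{homo} each $W_{s,\ep}$ has a real Weyl symbol $\W_{s,\ep} \in \J_k(\rho_{s+1})$, hence by Corollary \ref{corA} it is a bounded self-adjoint operator on $L^2(\T^l)$. Consequently $e^{i\ep_s W_{s,\ep}/\hbar}$ is well-defined and unitary, and $U_{n,\ep}(\hbar)$ is a finite product of unitaries whose inverse is obtained by reversing the order and flipping the sign of each phase; this yields the stated formula for $U_{n,\ep}(\hbar)^{-1}$.

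For part (2), I proceed by induction on $n$, unfolding the construction reviewed at the beginning of Section \ref{recesti}. The base case $n=0$ is a direct application of Theorem \ref{resto} at level $\ell=0$, giving $e^{i\ep W_0/\hbar}(L_\om + \ep V)e^{-i\ep W_0/\hbar} = L_\om + \ep N_0 + \ep^2 V_{1,\ep}$. For the inductive step, assuming the decomposition at level $n-1$, I conjugate by $e^{i\ep_n W_{n,\ep}/\hbar}$: since each $N_{s,\ep}$ with $s<n$ depends only on $L_\om$ (so is invariant under any functional calculus applied to $L_\om$-related quantities) and since $W_{n,\ep}$ is constructed precisely so as to solve the homological equation \eqref{heqell} relative to $\F_n(L_\om)=L_\om+\sum_{s<n}\ep_s N_{s,\ep}$, a second application of Theorem \ref{resto} yields $S_n = D_{n,\ep} + \ep_{n+1}V_{n+1,\ep}$. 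The symbol identity \eqref{sumD} follows because, again by Corollary \ref{corA}, the Weyl symbol of $N_{s,\ep}(L_\om)$ is the function $\N_{s,\ep}(\L_\om(\xi),\hbar)$.

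For part (3), the convergence of $\D_{n,\ep}$ in the norm $\|\cdot\|_{\rho/2,k}$ reduces to establishing summability of the series $\sum_s \ep_s \N_{s,\ep}(\L_\om(\xi),\hbar)$ in $\J_k(\rho/2)$. The sequence $d_\ell$ in \eqref{ddelta} is to be chosen (with the obvious rescaling implicit in the paper) so that $\sum_\ell d_\ell \leq \rho/2$, which ensures $\rho_s \geq \rho/2$ for all $s$ and hence $\|\cdot\|_{\rho/2,k}\leq \|\cdot\|_{\rho_s,k}$. By the final estimate \eqref{stimafn} of Theorem \ref{final}, $\ep_s\|\N_{s,\ep}\|_{\rho_s,k} \leq (\mu\ep\|\V\|_\rho)^{2^s}$, and by the very definition \eqref{condep} of $\ep^\ast(\tau,k)$ together with $\mu = e^{8(k+\tau+2)}$, the hypothesis $|\ep| < \ep^\ast(\tau,k)$ gives $\mu|\ep|\|\V\|_\rho \leq \mu^{-2}(k+2)^{-2\tau} < 1$. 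The partial sums therefore form a Cauchy sequence in the Banach space $(\J_k(\rho/2),\|\cdot\|_{\rho/2,k})$, proving \eqref{limD}; by Remark 2.4 this is exactly convergence in the $C^k([0,1];C^\om(\rho/2))$ topology.

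The only delicate point is (3), and the main obstacle is in fact purely bookkeeping: confirming that the scale of radii $\{\rho_s\}$ stays bounded below by $\rho/2$, which forces a rescaling of \eqref{ddelta}. Once this is granted, the double-exponential smallness of $(\mu|\ep|\|\V\|_\rho)^{2^s}$ provided by Theorem \ref{final} gives not just convergence but a vastly over-convergent tail bound, so no further estimates are required.
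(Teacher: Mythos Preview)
Your proof is correct and follows essentially the same route as the paper, which dismisses (1) and (2) as straightforward and argues (3) exactly as you do: $\rho_s>\rho/2$ gives $\|\cdot\|_{\rho/2,k}\le\|\cdot\|_{\rho_s,k}$, and then \eqref{stimafn} together with $\mu|\ep|\|\V\|_\rho<1$ forces the doubly-exponential tail to vanish. One small correction in your treatment of (2): the parenthetical claim that each $N_{s,\ep}(L_\om)$ with $s<n$ is ``invariant'' under conjugation by $e^{i\ep_n W_n/\hbar}$ is false (since $W_n$ is not a function of $L_\om$), but this does not matter because you correctly invoke Theorem~\ref{resto}, which handles the conjugation of the full package $\F_n(L_\om)+\ep_n V_n$ at once---just drop the misleading aside.
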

 \begin{proof}
 Since Assertions (1) and (2) are straightforward, we limit ourselves to the simple verification of Assertion (3).  If $|\ep|<\ep^\ast(\tau,k)$ then  $\ds \|V\|_{\rho,k}\mu
 \ep < \Lambda<1$. Recalling that $\|\cdot\|_{\rho,,k} \leq \|\cdot\|_{\rho^\prime,k}$ whenever $\rho\leq \rho^\prime$,  and that $\rho_\ell <\rho/2$,  $\forall\,\ell \in {\Bbb N}$, (\ref{stimafv}) yields:
\begin{eqnarray*}
&&
\ep_{n+1}\|\V_{n+1,\ep}\|_{\rho/{2},k}\leq \ep_{n+1}\|\V_{n+1,\ep}\|_{\rho_{n+1},k}\leq
\\
&&
\left[\|V\|_{\rho,k}\mu
 \ep\right]^{2^{n+1}}\to 0, \quad n\to\infty, \;k\;{\rm fixed}.
 \end{eqnarray*}
 In the same way, by (\ref{stimafn}):
 \begin{eqnarray*}
&&
\|\N_{n,\ep}\|_{\rho/{2},k}\leq \|\N_{n,\ep}\|_{\rho_{n},k}=
\|\overline{\V}_{n,\ep}\|_{\rho_{n},k}\leq
\|\V_{n,\ep}\|_{\rho_{n},,k}\leq
\\
&&
 \left[\|V\|_{\rho,k}\mu
 \ep\right]^{2^{n}}=\left[\|V\|_{\rho}\mu
 \ep\right]^{2^{n}}\to 0, \quad n\to\infty, \;k\;{\rm fixed}. \ \end{eqnarray*}
This concludes the proof of the Corollary. 
 \end{proof}
 \vskip 1cm\noindent
\section{Convergence of the iteration and of the normal form.}
\label{iteration}
\renewcommand{\thetheorem}{\thesection.\arabic{theorem}}
\renewcommand{\theproposition}{\thesection.\arabic{proposition}}
\renewcommand{\thelemma}{\thesection.\arabic{lemma}}
\renewcommand{\thedefinition}{\thesection.\arabic{definition}}
\renewcommand{\thecorollary}{\thesection.\arabic{corollary}}
\renewcommand{\theequation}{\thesection.\arabic{equation}}
\renewcommand{\theremark}{\thesection.\arabic{remark}}
\setcounter{equation}{0}% 
\setcounter{theorem}{0}% 
Let us first prove the uniform convergence of the unitary transformation sequence as $n\to\infty$.  Recall that $\ep^\ast(\tau,k)> \ep^\ast(\tau,k+1),  \; k=0,1,\ldots$, and recall the abbreviation $\|\cdot\|_{\rho,0}:=\|\cdot\|_{\rho}$.  Let $\ep^\ast(\tau)$ be defined by \eqref{rast}.  Then:
\begin{lemma}
\label{Wsequence}
Let  $\hbar$ be fixed, and $\ds |\ep|<\ep^\ast(\tau)$. Consider the sequence  $\ds \{U_{n,\ep}(\hbar)\}$ of unitary operators  in $L^2(\T^l)$ defined by (\ref{Un}).
Then there is a unitary operator $U_{\infty,\ep}(\hbar)$ in $L^2(\T^l)$ such that
$$
\lim_{n\to\infty}\|U_{n,\ep}(\hbar)-U_{\infty,\ep}(\hbar)\|_{L^2\to L^2}=0
$$
\end{lemma}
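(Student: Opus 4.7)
The plan is to show that $\{U_{n,\ep}(\hbar)\}$ is Cauchy in the operator norm of $\mathcal B(L^2(\T^l))$ via a telescoping estimate, using the super-exponential decay of $\ep_n\|W_{n,\ep}\|_{\rho_{n+1},0}$ already obtained in Theorem \ref{final}. First, I would write
\[
U_{n+1,\ep}(\hbar)-U_{n,\ep}(\hbar)=\bigl(e^{i\ep_{n+1}W_{n+1,\ep}/\hbar}-I\bigr)U_{n,\ep}(\hbar),
\]
so that by the unitarity of $U_{n,\ep}(\hbar)$ and the elementary inequality $\|e^{iA}-I\|\le \|A\|$ valid for any bounded self-adjoint operator $A$ (write $e^{iA}-I=iA\int_0^1 e^{itA}dt$ and use $\|e^{itA}\|=1$), one has
\[
\|U_{n+1,\ep}(\hbar)-U_{n,\ep}(\hbar)\|_{L^2\to L^2}\le \frac{|\ep_{n+1}|}{\hbar}\,\|W_{n+1,\ep}\|_{L^2\to L^2}.
\]
Here $W_{n+1,\ep}=W_{n+1,\ep}^\ast$ by Theorem \ref{homo}.

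Next I would estimate the right hand side. By Corollary \ref{corA}, assertion (1), formula \eqref{stimg}, for every $\rho>0$ and every $k\ge 0$ we have $\|W_{n+1,\ep}\|_{L^2\to L^2}\le \|\W_{n+1,\ep}\|_{\rho_{n+2},k}$. Choosing $k=0$ and invoking \eqref{stimafw} from Theorem \ref{final}, we obtain
\[
|\ep_{n+1}|\,\|W_{n+1,\ep}\|_{\rho_{n+2},0}\le (n+2)^{2\tau}\bigl(\mu\,\ep\,\|\V\|_{\rho}\bigr)^{2^{n+1}}, \qquad \mu=e^{8(\tau+2)}.
\]
A direct verification shows that for $|\ep|<\ep^{\ast}(\tau)$ given by \eqref{rast} one has
\[
\mu\,|\ep|\,\|\V\|_{\rho}\;\le\;\frac{e^{8(\tau+2)}}{2^{2\tau}e^{24(\tau+2)}}\;=\;\frac{1}{2^{2\tau}e^{16(\tau+2)}}\;<\;1,
\]
so the sequence $(n+2)^{2\tau}(\mu\ep\|\V\|_{\rho})^{2^{n+1}}$ is summable (its tail decays super-exponentially).

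Hence, for $\hbar\in(0,1]$ fixed, $\sum_{n\ge 0}\|U_{n+1,\ep}(\hbar)-U_{n,\ep}(\hbar)\|_{L^2\to L^2}<\infty$, so $\{U_{n,\ep}(\hbar)\}$ is a Cauchy sequence in the Banach algebra $\mathcal B(L^2(\T^l))$. Denote its limit by $U_{\infty,\ep}(\hbar)$. Passing to the norm limit in the identities $U_{n,\ep}^{\ast}U_{n,\ep}=U_{n,\ep}U_{n,\ep}^{\ast}=I$ (using that $\|U_{n,\ep}^{\ast}-U_{\infty,\ep}^{\ast}\|=\|U_{n,\ep}-U_{\infty,\ep}\|$ and that multiplication is jointly continuous in norm on bounded sets) yields $U_{\infty,\ep}^{\ast}U_{\infty,\ep}=U_{\infty,\ep}U_{\infty,\ep}^{\ast}=I$, so $U_{\infty,\ep}(\hbar)$ is unitary, as claimed.

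There is no genuine obstacle: all the work is contained in the recursive estimate \eqref{stimafw}. The only verification to be done carefully is the arithmetic showing $\mu|\ep|\|\V\|_\rho<1$ for $|\ep|<\ep^{\ast}(\tau)$, which ensures the $2^{n+1}$-th power dominates the polynomial factor $(n+2)^{2\tau}$; this is exactly what makes the $\ep^{\ast}(\tau)$ in \eqref{rast} the natural threshold for Lemma \ref{Wsequence}.
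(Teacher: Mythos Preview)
Your proof is correct and rests on the same engine as the paper's, namely the super-exponential bound \eqref{stimafw} from Theorem \ref{final}, used to make $\{U_{n,\ep}(\hbar)\}$ Cauchy in $\mathcal B(L^2)$. The organization differs: the paper estimates $\|U_{n+p,\ep}-U_{n,\ep}\|$ directly by writing $e^{i\ep_\ell W_\ell/\hbar}=I+\beta_{\ell,\ep}$ and expanding the product $\prod_{\ell=n+1}^{n+p}(I+\beta_{\ell,\ep})-I$ into sums of products of the $\beta$'s, then summing a geometric-type series; you instead telescope into consecutive differences and use the clean bound $\|e^{iA}-I\|\le\|A\|$, which is shorter and avoids the combinatorial expansion. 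Your argument also makes explicit the passage to unitarity of the limit via norm-continuity of adjoints and products, a step the paper leaves implicit. Both arguments require $\hbar>0$ fixed (the factor $1/\hbar$ is harmless since the $2^{n}$-th power of a number $<1$ dominates), and both hinge on the same verification that $\mu|\ep|\|\V\|_\rho<1$ when $|\ep|<\ep^\ast(\tau)$.
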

\begin{proof}
%Without loss we can take $\hbar=1$. 
We have, for $p=1,2,\ldots$:
\begin{eqnarray*}
&&
U_{n+p,\ep}-U_{n,\ep}=\Delta_{n+p,\ep}e^{i\ep_n \frac{W_n}\hbar}\cdots e^{i\ep \frac{W_1}\hbar}, \quad \Delta_{n+p,\ep}:=(e^{i\ep_{n+p}\frac{W_{n+p}}\hbar}\cdots e^{i\ep_{n+1}\frac{W_{n+1}}\hbar}-I)
\\
&&
\|U_{n+p,\ep}-U_{n,\ep}\|_{L^2\to L^2}\leq 2\|\Delta_{n+p,\ep}\|_{L^2\to L^2}
\end{eqnarray*}
Now we apply the mean value theorem and obtain
$$
e^{i\ep_\ell \frac{W_{\ell,\ep}}\hbar}=1+\beta_{\ell,\ep} \quad \beta_{\ell,\ep}:=i\ep_\ell \frac{W_{\ell,\ep}}\hbar
\int_0^{\ep_\ell}e^{i\ep^\prime_\ell\frac{ W_{\ell,\ep}}\hbar}\,d\ep^\prime_\ell ,
$$
whence, by  (\ref{stimafw}) in which we make $k=0$:
\be
\label{stimaesp}
 %\|\beta_{\ell,\ep}\|\leq \ep_\ell \|W_{\ell,\ep}\|_{\rho_{\ell}}\leq \ep_\ell \|\W_{\ell,\ep}\|_{\rho_{\ell},0}
\hbar\|\beta_{\ell,\ep}\| \leq \ep_\ell \|W_{\ell,\ep}\|_{\rho_{\ell}} = \ep_\ell \|\W_{\ell,\ep}\|_{\rho_{\ell},0}
\leq  4\gamma\tau^\tau (\ell+1)^{2\tau}
 \leq A^\ell
\ee
for some $A<1$.  Now:
\begin{eqnarray*}
&&
\Delta_{n+p,\ep}=[(1+\beta_{n+p,\ep}\ep_{n+p})(1+\beta_{n+p-1,\ep}\ep_{n+p-1})\cdots (1+\beta_{n+1,\ep}\ep_{n+1})-1]=\sum_{1\leq j\leq p}\beta_{n+j,\ep}\ep_{n+j}
\\
&&
+\sum_{1\leq j_1<j_2\leq p}\beta_{n+j_1,\ep}\ep_{n+j_1}\beta_{n+j_2,\ep}\ep_{n+j_2}+
\sum_{1\leq j_1<j_2<j_3\leq p}\beta_{n+j_1,\ep}\ep_{n+j_1}\beta_{n+j_2,\ep}\ep_{n+j_2}\beta_{n+j_3,\ep}\ep_{n+j_3}
\\
&&
+\ldots +\beta_{n+1,\ep}\cdots\beta_{n+p,\ep}\ep_{n+1}\cdots\ep_{n+p}
\end{eqnarray*}
Therefore, by (\ref{stimaesp}):
\begin{eqnarray*}
&&
\|\Delta_{n+p,\ep}\|_{L^2\to L^2}\leq \sum_{1\leq j\leq p}\frac{A^{n+j}}\hbar+\sum_{1\leq j_1<j_2\leq p}\frac{A^{n+j_1}A^{n+j_2}}{\hbar^2}+\sum_{1\leq j_1<j_2<j_3\leq p}\frac{A^{n+j_1}A^{n+j_2}A^{n+j_3}}{\hbar^3}+\ldots 
\\
&&
\leq \frac{A^{n}}\hbar\frac{A}{1-A}+\frac{A^{2n}}{\hbar^2}\left(\frac{A}{1-A}\right)^2+\ldots +\frac{A^{pn}}{\hbar^p}\left(\frac{A}{1-A}\right)^p
\\
%&&
%\frac{A^n}{1-A^n}\left[1+A^n\left(\frac{A}{1-A}\right)+\ldots+A^{(p-1)n}\left(\frac{A}{1-A^n}\right)^{p-1}\right]=
%\\
&&
\leq \frac{A^n}{\hbar(1-A)}\frac{1}{1-\frac{A^n}{\hbar(1-A)}}\ \ \ \mbox{ for } n>\frac{\log{(\hbar(1-A))}}{\log{A}}.
\end{eqnarray*}
Therfeore 
\[
\Delta_{n+p,\ep}\to 0,\quad n\to\infty,\quad \forall\,p,\hbar>0.
\]
\vskip 5pt\noindent
Hence $\{U_{n,\ep}(\hbar)\}_{n\in{\Bbb N}}$ is a Cauchy sequence in the operator norm, uniformly with respect to $|\ep|<\ep^\ast_0$,  and the Lemma is proved.
\end{proof}
We are now in position to prove existence and analyticity of the limit of the KAM iteration, whence the uniform convergence of the QNF. 
%%%%%%%%%%%%%%%%%%%%%%
%%%%%%%%%%%%%%%%%%%%%%
\vskip 0.3cm\noindent
{\bf Proof of Theorems \ref{mainth} and \ref{regolarita}}
\newline
The operator family $H_\ep$ is self-adjoint in $L^2(T^l)$ with pure point spectrum $\forall\,\ep\in\R$ because $V$ is a continuous operator. 
By  Corollary \ref{maincc}, the operator sequence $\{D_{n,\ep}(\hbar)\}_{n\in {\Bbb N}}$ admits for $|\ep|<\ep^\ast_0$ the  uniform norm limit 
$$
D_{\infty,\hbar}(L_\om,\hbar)=L_\om+\sum_{m=0}^\infty\ep^{2^m}N_{m,\ep}(L_\om,\hbar)
$$ 
of symbol $\D_{\infty,\hbar}(\L_\om(\xi))$.  The series is norm-convergent by (\ref{stimafn}). 
By Lemma (\ref{Wsequence}), $D_{\infty,\hbar}(L_\om,\hbar)$ is unitarily equivalent to $H_\ep$. The 
  operator family $\ep\mapsto D_{\infty,\ep}(\hbar)$ is holomorphic for $|\ep|<\ep^\ast_0$, uniformly with respect to $\hbar\in[0,1]$.   As a consequence,  $D_{\infty,\ep}(\hbar)$ admits the norm-convergent expansion:
 $$
 D_{\infty,\ep}(L_\om,\hbar)=L_\om+\sum_{s=1}^\infty B_s(L_\om,\hbar)\ep^s, \quad |\ep|<\ep^\ast(\tau)
 $$
which is  the convergent quantum normal form. 

On the other hand, (\ref{limD}) entails that the symbol $\D_{\infty,\ep}(\L_\om(\xi),\hbar)$ is a $\J(\rho/2)$-valued holomorphic function of $\ep$, $|\ep|<\ep^\ast (\tau)$, continuous with respect to $\hbar\in [0,1]$.  Therefore it admits the  expansion 
\be
\label{fnormale}
\D_{\infty,\ep}(\L_\om(\xi),\hbar)=\L_\om(\xi)+\sum_{s=1}^\infty{\mathcal B}_s(\L_\om(\xi),\hbar)\ep^s, \quad |\ep|<\ep^\ast(\tau)
 \ee
  convergent in the $\|\cdot\|_{\rho/2}$-norm, with radius of convergence $\ep^\ast(\tau)$.  Hence, in the notation of Theorem \ref{mainth},   $\D_{\infty,\ep}(\L_\om(\xi),\hbar)\equiv \B_{\infty,\ep}(\L_\om(\xi),\hbar)$.  By construction, ${\mathcal B}_s(\L_\om(\xi),\hbar)$ is the symbol of $B_s(L_\om,\hbar)$.  $\B_{\infty,\ep}(\L_\om(\xi),\hbar)$  is the symbol yielding the quantum normal form  via Weyl's quan\-ti\-za\-tion.  Likewise, the symbol $\W_{\infty,\ep}(\xi,x,\hbar)$ is a $\J(\rho/2)$-valued holomorphic function of $\ep$, $|\ep|<\ep^\ast(\tau)$,  continuous with respect to $\hbar\in [0,1]$, and admits the expansion:
\be
\label{fgen}
\W_{\infty,\ep}(\xi,x,\hbar)=\la\xi,x\ra+\sum_{s=1}^\infty{\mathcal W}_s(\xi,x,\hbar)\ep^s, \quad |\ep|<\ep^\ast
\ee
convergent in the $\|\cdot\|_{\rho/2}$-norm, once more with radius of convergence $\ep^\ast(\tau)$.  Since Since $\|\B_s\|_1 \leq \|\B_s\|_{\rho/2}$,  $\|{\mathcal W}_s \|_1\leq \|{\mathcal W}_s\|_{\rho/2} $ $\forall\,\rho>0$.   By construction,  $\B_{\infty,\ep}(\xi,x,\hbar)=\B_{\infty,\ep}(t,x,\hbar)|_{t=\L_\om(\xi)}$.  Theorem \ref{mainth} is proved.

  Remark furthermore that the principal symbol of  $\B_{\infty,\ep}(\L_\om(\xi),\hbar)$ is just the convergent Birkhoff normal form:
 $$
  \B_{\infty,\ep}=\L_\om(\xi)+\sum_{s=1}^\infty{\mathcal B}_s(\L_\om(\xi))\ep^s, \quad |\ep|<\ep^\ast(\tau)
  $$
\vskip 4pt
Theorem (\ref{regolarita}) is a direct consequence of (\ref{limD}) on account of the fact that
$$
\sum_{\gamma=0}^r\max_{\hbar\in [0,1]} \|\partial^\gamma_\hbar \B_\infty(t;\ep,\hbar) \|_{\rho/2}\leq \|\B_\infty\|_{\rho/2,k}
$$ 
Remark indeed that  by (\ref{limD}) the series (\ref{fnormale}) converges in the $\|\cdot\|_{\rho/2,r}$ norm if $|\ep|<\ep^\ast(\tau,r)$. Therefore $\B_s(t,\hbar)\in C^r([0,1];C^\om(\{t\in\C\,|\,|\Im t|<\rho/2\})$ and the formula (\ref{EQF}) follows from (\ref{fnormale}) upon Weyl quantization. This concludes the proof of the Theorem. 
\vskip 1.0cm\noindent
%\newpage
%%%%%%%%%%%%%%%%%%%%%%
\begin{appendix}
\section{The quantum normal form}
\renewcommand{\thetheorem}{\thesection.\arabic{theorem}}
\renewcommand{\theproposition}{\thesection.\arabic{proposition}}
\renewcommand{\thelemma}{\thesection.\arabic{lemma}}
\renewcommand{\thedefinition}{\thesection.\arabic{definition}}
\renewcommand{\thecorollary}{\thesection.\arabic{corollary}}
\renewcommand{\theequation}{\thesection.\arabic{equation}}
\renewcommand{\theremark}{\thesection.\arabic{remark}}
\setcounter{equation}{0}% 
\setcounter{theorem}{0}% 
%\end{center}
\noindent
The quantum normal form in the framework of semiclassical analysis has been introduced by Sj\"ostrand \cite{Sj}. We follow here the presentation of \cite{BGP}.  
\vskip 6pt\noindent
{\bf 1. The formal construction}
Given the operator family $\ep\mapsto H_\ep=L_\om+\ep V$, look for a unitary 
transformation $\ds U(\om,\ep,\hbar)=e^{i W(\ep)/\hbar}:
L^2(\T^l)\leftrightarrow L^2(\T^l)$,
$W(\ep)=W^\ast(\ep)$,  such that: 
\be
\label{A1}
S(\ep):=UH_\ep U^{-1}=L(\om)+\ep B_1+\ep^2 B_2+\ldots+
\ep^k R_k(\ep) 
\ee
where $[B_p,L_0]=0$, $p=1,\ldots,k-1$. Recall the formal commutator expansion: 
\be
%\nonumber
S(\ep)=e^{it W(\ep)/\hbar}He^{-it W(\ep)/\hbar}=\sum_{l=0}^\infty t^lH_l,\quad H_0:=H,\quad
H_l:=\frac{[W,H_{l-1}]}{i\hbar l}, \;l\geq 1
\label{A2}
\ee
and look for $W(\ep)$ under the form of a power series: 
$W(\ep)=\ep W_1+\ep^2W_2+\ldots$. Then   (\ref{A2}) becomes:
\be
\label{A3}
S(\ep)=\sum_{s=0}^{k-1}\ep^s  P_s +\ep^{k}{R}^{(k)}
\ee
where
\be
\label{A4}
P_0=L_\om;\quad {P}_s:=\frac{[W_s,H_0]}{i\hbar}+V_s,\quad s\geq 1, \;V_1\equiv V 
\ee
\begin{eqnarray*}
V_s =\sum_{r=2}^s\frac{1}{r!}\sum_{{j_1+\ldots+j_r=s}\atop {j_l\geq
1}}\frac{[W_{j_1},[W_{j_2},\ldots,[W_{j_r},H_0]\ldots]}{(i\hbar)^r} 
+\sum_{r=2}^{s-1}\frac{1}{r!}\sum_{{j_1+\ldots+j_r=s-1}\atop {j_l\geq
1}}\frac{[W_{j_1},[W_{j_2},\ldots,[W_{j_r},V]\ldots]}{(i\hbar)^r} 
\end{eqnarray*}
\begin{eqnarray*}
{R}^{(k)}=\sum_{r=k}^\infty\frac{1}{r!}\sum_{{j_1+\ldots+j_r=k}\atop {j_l\geq
1}}\frac{[W_{j_1},[W_{j_2},\ldots,[W_{j_r},L_\om]\ldots]}{(i\hbar)^r} 
+\sum_{r=k-1}^{\infty}\frac{1}{r!}\sum_{{j_1+\ldots+j_r=k-1}\atop {j_l\geq
1}}\frac{[W_{j_1},[W_{j_2},\ldots,[W_{j_r},V]\ldots]}{(i\hbar)^r}
\end{eqnarray*}
Since  $V_s$ depends on $W_1,\ldots,W_{s-1}$,  (A1) and (A3) yield the
recursive homological equations:
\be
\label{A5}
\frac{[W_s,P_0]}{i\hbar} +V_s=B_s, \qquad [L_0,B_s]=0 
\ee
To solve for $S$, $W_s$, $B_s$, we can equivalently look for their symbols.  The 
 equations (\ref{A2}), (\ref{A3}), (\ref{A4}) 
become, once written for the symbols:
\be
 \label{A6}
\Sigma(\ep)=\sum_{l=0}^\infty {\H}_l,\quad {\H}_0:=\L_\om+\ep
\V,\quad  {\H}_l:=\frac{\{w,{\H}_{l-1}\}_M}{ l}, \;l\geq 1\ee
\be
\label{A7}
\Sigma(\ep)=\sum_{s=0}^{k}\ep^s {\mathcal P}_s +\ep^{k+1}{\mathcal R}^{(k+1)} 
\ee
where
\be
\label{A8}
{\mathcal P}_0=\L_\om;\qquad {\mathcal P}_s :=\{\W_s,{\mathcal P}_0 \}_M+\V_s,\quad s=1, \ldots,\qquad  \V_1\equiv \V_0=\V \ee
\begin{eqnarray*}
&&
\V_s :=\sum_{r=2}^s\frac{1}{r!}\sum_{{j_1+\ldots+j_r=s}\atop {j_l\geq
1}}\{\W_{j_1},\{\W_{j_2},\ldots,\{\W_{j_r},\L_\om\}_M\ldots\}_M + 
\\
&&
+\sum_{r=1}^{s-1}\frac{1}{r!}\sum_{{j_1+\ldots+j_r=s-1}\atop {j_l\geq
1}}\{\W_{j_1},\{\W_{j_2},\ldots,\{\W_{j_r},\V\}_M\ldots\}_M, \quad s>1 \end{eqnarray*}
\begin{eqnarray*}
&&
{\mathcal R}^{(k)}=\sum_{r=k}^\infty\frac{1}{r!}\sum_{{j_1+\ldots+j_r=k}\atop {j_l\geq
1}}\{\W_{j_1},\{\W_{j_2},\ldots,\{\W_{j_r},\L_\om\}_M\ldots\}_M+
\\
&&
\sum_{r=k-1}^{\infty}\frac{1}{r!}\sum_{{j_1+\ldots+j_r=k-1}\atop {j_l\geq
1}}\{\W_{j_1},\{\W_{j_2},\ldots,\{\W_{j_r},\V\}_M\ldots\}_M\end{eqnarray*}
In turn, the recursive homological equations become:
\be
\label{A9}
\{\W_s,\L_{\om}\}_M +\V_s=\B_s, \qquad \{\L_{\om},\B_s\}_M =0 
\ee
\vskip 6pt\noindent
{\bf 2. Solution of the homological equation and estimates of the solution} 
\vskip 3pt\noindent
The key
remark is that
$\{\A,\L_\om\}_M=\{\A,\L_\om\}$ for any smooth symbol $\A(\xi;x;\hbar)$ because $\L_\om$ is linear in $\xi$. The
homological equation  (A.9) becomes therefore
\be
\label{A10}
\{\W_s,\L_\om\} +\V_s=\B_s, \qquad \{\L_\om,\B_s\} =0 \ee
We then have:
%\par\noindent
%{\bf Proposition A.1}
\begin{proposition}
  Let $\V_s(\xi,x;\hbar)\in\J(\rho_s)$. Then  the
equation
\be
\label{A11}
%\label{cchomsq}
\{\W_s,\L_\om\} +\V_s=\B_s, \qquad \{\L_\om,\B_s\} =0 \ee
admits $\forall\,0<d_s<\rho_s$ the solutions $\B_s(\L_\om(\xi;)\hbar)\in \J(\rho_s)$, $\W\in\J(\rho-d_s)$ given by:
\be
\label{A12}
 \B_s(\xi;\hbar)=\overline{\V_s}; \quad \W_s(\xi,x;\hbar)=\L_\om^{-1}\V_s, \quad \L_\om^{-1}\V_s:=\sum_{0\neq\in\Z^l
}\frac{\V_{s.q}(\L_\om(\xi))}{i\la\om,q\ra}e^{i\la q,x\ra}.  
\ee 
 Moreover:
\be
\label{stimaWs}
\|\B_s\|_{\rho_s}\leq  \|\V_s\|_{\rho_s}; \qquad 
\|\W_s\|_{\rho_s-d_s} \leq \gamma \left(\frac{\tau}{d_s}\right)^\tau \|\V_s\|_{\rho_s}.
\ee
\end{proposition}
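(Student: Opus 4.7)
The plan is to reduce the operator equation to a family of algebraic equations by Fourier-expanding in the angle variable $x\in\T^l$, solve mode by mode using the diophantine condition, and derive the stated estimates through the elementary exchange of a polynomial factor for an exponential weight.

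First I would expand $\V_s(\xi,x;\hbar)=\sum_{q\in\Z^l}\V_{s,q}(\L_\om(\xi);\hbar)e^{i\la q,x\ra}$, and similarly for $\W_s$ and $\B_s$. Since $\L_\om(\xi)=\la\om,\xi\ra$ is linear in $\xi$, the Moyal bracket collapses to the Poisson bracket (cf.\ (\ref{MP})) and $\{\W_s,\L_\om\}_M=\{\W_s,\L_\om\}=-\la\om,\nabla_x\ra\W_s$. Projecting equation (A.11) onto the $q$-th Fourier mode in $x$ therefore produces the decoupled relation
$$
-i\la\om,q\ra\,\W_{s,q}(\L_\om(\xi);\hbar)+\V_{s,q}(\L_\om(\xi);\hbar)=\B_{s,q}(\L_\om(\xi);\hbar).
$$
The constraint $\{\L_\om,\B_s\}=0$ forces $\B_{s,q}=0$ for every $q\neq 0$, so the $q=0$ equation leaves no choice but $\B_s:=\V_{s,0}=\overline{\V_s}$. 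For $q\neq 0$ the diophantine hypothesis (H1) guarantees $\la\om,q\ra\neq 0$, and the unique choice consistent with zero $x$-mean of $\W_s$ is $\W_{s,q}=\V_{s,q}/(i\la\om,q\ra)$ together with $\W_{s,0}=0$, which is precisely (A.12).

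The bound $\|\B_s\|_{\rho_s}\leq\|\V_s\|_{\rho_s}$ is immediate from the definition of the norm, since $\B_s$ retains only the $q=0$ Fourier coefficient of $\V_s$ while the norm is a sum of non-negative contributions indexed by $q\in\Z^l$. For the estimate on $\W_s$ I would start from the explicit formula, invoke (H1) to bound $|\la\om,q\ra|^{-1}\leq\gamma|q|^\tau$, and then absorb the polynomial factor $|q|^\tau$ into the exponential weight through the elementary inequality $\sup_{r\geq 0} r^\tau e^{-d_s r}=(\tau/(ed_s))^\tau\leq(\tau/d_s)^\tau$. The surplus $e^{-d_s|q|}$ is paid for by passing from the weight $e^{\rho_s|q|}$ to $e^{(\rho_s-d_s)|q|}$, that is by replacing the norm $\|\cdot\|_{\rho_s}$ by $\|\cdot\|_{\rho_s-d_s}$, yielding the claimed inequality $\|\W_s\|_{\rho_s-d_s}\leq\gamma(\tau/d_s)^\tau\|\V_s\|_{\rho_s}$.

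The argument is essentially algebraic and presents no real obstacle: the linearity of $\L_\om$ makes the Moyal bracket collapse, the equation decouples completely under Fourier expansion, and the small-denominator estimate combined with a single weight-exchange inequality delivers the required bound. The only point that would deserve care in a full write-up is the careful bookkeeping of the Fourier integral in (\ref{sigom}) to ensure that the polynomial-exponential trade-off affects only the $|q|$-weight and not the $|p|$-weight, so that the final norm of $\W_s$ is indeed at level $\rho_s-d_s$ and not a further-reduced value.
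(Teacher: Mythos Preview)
Your proposal is correct and follows essentially the same approach as the paper. The paper's proof is extremely terse---it simply asserts that the formulas (A.12) solve (A.11), calls the estimate for $\B_s$ obvious, and says the estimate for $\W_s$ follows from the small denominator inequality (\ref{DC})---whereas you have spelled out the Fourier decoupling, the collapse of the Moyal bracket to the Poisson bracket via (\ref{MP}), and the polynomial-exponential trade-off $\sup_{r\geq 0} r^\tau e^{-d_s r}=(\tau/(ed_s))^\tau$ that the paper uses repeatedly elsewhere.
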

\begin{proof}  $\B_s$ and $\W_s$ defined by (A.12) clearly solve the homological equation (A.11). The estimate for $\B_s$ is obvious, and the estimate for $\W_s$ follows once more by the small denominator inequality (\ref{DC}). 
\end{proof}
By definition of $\|\cdot\|_{\rho}$ norm:
\be
\label{A13}
\|B_s\|_{L^2\to L^2}\leq \|B_s\|_{\rho} \leq \|\V_s\|_{\rho_s}; \quad \|B_s\|_{L^2\to\L^2}\leq \|B_s\|_{\rho} \leq \|\V_s\|_{\rho_s}
\ee
  Hence all terms of the quantum normal form and the remainder can be recursively estimated  in terms of $\|\V\|_{\rho}$ by Corollary 3.11.  Setting now, for $s\geq 1$:
\begin{eqnarray*}
&&
  \rho_s:=\rho- s d_s, \quad d_s< \frac{\rho}{s+1}; \qquad \rho_0:=\rho
\\
&&
\mu_s:=8\gamma \tau^\tau \frac{E}{d_s^\tau\delta_s^2}, \quad E:=\|\V\|_{\rho}. 
\end{eqnarray*}
  we actually have, applying without modification the  argument of \cite{BGP}, Proposition 3.2:
 \begin{proposition}
  Let $\mu_s<1/2, s=1,\ldots,k$.  Set:
 $$
 K:=\frac{8\cdot 2^{\tau+5}\gamma\tau^\tau}{\rho^{2+\tau}}.
 $$
 Then the following estimates hold for the quantum normal form: 
 \begin{eqnarray*}
 &&
\sum_{s=1}^k \|B_s\|_{\rho/2}\ep^s \leq \sum_{s=1}^k \|\B_s\|_{\rho/2}\ep^s\leq \sum_{s=1}^k E^sK^s s^{(\tau+2)s}\ep^s
\\
&&
{}
\\
&&
\|R_{k+1}\|_{\rho/2}\leq \|{\mathcal R}_{k+1}\|_{\rho/2}\leq (EK)^{k+1}(k+1)^{(\tau+2)(k+1)}\ep^{k+1}
\end{eqnarray*}
\end{proposition}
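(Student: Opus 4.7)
The plan is to prove the estimates by strong induction on $s$, bounding $\|\V_s\|_{\rho_s}$ and then transferring to $\|\B_s\|_{\rho/2}$ and $\|\W_s\|_{\rho_s-d_s}$ through Proposition A.1. The base case $s=1$ is immediate since $\V_1=\V$, so $\|\V_1\|_{\rho_1}\leq\|\V\|_{\rho}=E$. The goal of the induction is the bound
\[
\|\V_s\|_{\rho_s}\leq E(EK)^{s-1}s^{(\tau+2)(s-1)},
\]
from which the claimed estimate on $\|\B_s\|_{\rho/2}$ follows because $\|\B_s\|_{\rho_s}\leq\|\V_s\|_{\rho_s}$ (Proposition A.1) and $\rho_s>\rho/2$ for the choice $d_s<\rho/(2s)$ that will be built into the setup through the $\mu_s<1/2$ assumption. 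Finally the operator bound $\|B_s\|_{L^2\to L^2}\leq\|\B_s\|_{\rho/2}$ follows from the Calder\'on--Vaillancourt inequality~\eqref{stimg}.

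For the inductive step, I would feed the recursion for $\V_s$, namely
\[
\V_s=\sum_{r=2}^s\frac{1}{r!}\sum_{j_1+\ldots+j_r=s}\{\W_{j_1},\ldots,\{\W_{j_r},\L_\om\}_M\ldots\}_M+\sum_{r=1}^{s-1}\frac{1}{r!}\sum_{j_1+\ldots+j_r=s-1}\{\W_{j_1},\ldots,\{\W_{j_r},\V\}_M\ldots\}_M,
\]
into the uniform multi-Moyal estimates. For the brackets with $\L_\om$, Corollary~\ref{stimaP} gives an extra gain of $1/(ed_s)$ beyond the $(C/d_s^2)^r$ coming from $r$-fold Moyal iteration; for the brackets with $\V$, Corollary~\ref{multipleM} gives the bare $(C/d_s^2)^r$ factor, multiplied by $\|\V\|_{\rho_s+d_s}\leq E$. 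Combining with the solution estimate $\|\W_j\|_{\rho_j-d_j}\leq\gamma(\tau/d_j)^\tau\|\V_j\|_{\rho_j}$ from Proposition A.1, each term becomes a product of $r$ inductive factors $\|\V_{j_i}\|_{\rho_{j_i}}$ multiplied by a power of $1/d_s^{2+\tau}$, which is where the factor $s^{\tau+2}$ per power of $K$ originates.

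The main technical obstacle is the combinatorial control of the sum over compositions $j_1+\ldots+j_r=s$. I would bound the number of such compositions by $\binom{s-1}{r-1}\leq 2^{s-1}$, and use the super-multiplicativity estimate $\prod_i j_i^{(\tau+2)(j_i-1)}\leq s^{(\tau+2)(s-r)}$, which converts the inductive product into a single factor $s^{(\tau+2)(s-r)}$. The residual sum over $r\leq s$ is then dominated by a geometric series whose ratio is bounded by $\mu_s=8\gamma\tau^\tau E/(d_s^\tau\delta_s^2)<1/2$; this is precisely the reason for introducing $\mu_s$ and requiring the smallness $\mu_s<1/2$. A careful bookkeeping of the constants (absorbing the $r!$ denominators against the number of compositions, and tracking the explicit dependence on $\rho$) yields the closed form of $K$ in the statement. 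This is the quantum analogue of the classical proof of \cite{BGP}, Proposition~3.2, and works verbatim once Proposition~\ref{stimeMo} has been established, because that proposition shows that the Moyal bracket obeys the same $\|\cdot\|_\rho$ estimates as the Poisson bracket.

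The remainder bound is obtained by the same machinery applied to
\[
\mathcal{R}^{(k+1)}=\sum_{r=k+1}^{\infty}\frac{1}{r!}\sum_{j_1+\ldots+j_r=k+1}\{\W_{j_1},\ldots,\{\W_{j_r},\L_\om\}_M\}_M+\sum_{r=k}^{\infty}\frac{1}{r!}\sum_{j_1+\ldots+j_r=k}\{\W_{j_1},\ldots,\{\W_{j_r},\V\}_M\}_M,
\]
estimated term-by-term with the already-established bounds on $\|\V_s\|_{\rho_s}$ for $s\leq k$; the smallness $\mu_s<1/2$ guarantees convergence of the tail in $r$ and gives the single closed expression $(EK)^{k+1}(k+1)^{(\tau+2)(k+1)}\ep^{k+1}$. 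As above, the operator bound on $R_{k+1}$ follows from~\eqref{stimg}.
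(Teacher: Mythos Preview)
Your sketch is correct and matches what the paper does: the paper gives no proof of its own here, stating only that the result follows ``applying without modification the argument of \cite{BGP}, Proposition 3.2.'' Your outline---strong induction on $s$ for $\|\V_s\|_{\rho_s}$, feeding the recursion through the homological-equation estimate (Proposition~A.1) and the iterated Moyal bounds (Corollaries~\ref{multipleM} and~\ref{stimaP}), and controlling the composition sums via the smallness $\mu_s<1/2$---is precisely that argument, transplanted verbatim to the present norms thanks to Proposition~\ref{stimeMo}.
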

\vskip 1cm\noindent
%%%%%%%%%%%
%%%%%%%%%%%
%\section{Properties of $\J_k(\rho,\sigma)$}

\end{appendix}

\newpage

%\vfill\eject
%%%%%%%%%%%%%%%%%
%%%%%%%%%%%%%%%%%

\vskip 1.0cm\noindent
\end{document}